\documentclass[11pt,reqno,a4paper]{amsart}
\usepackage{amssymb,amscd,amsmath}
\usepackage{pspicture}
\usepackage{graphicx}

\usepackage{mathptmx}

\usepackage[matrix,arrow,curve,frame]{xy}    

\xymatrixcolsep{1.9pc}                          
\xymatrixrowsep{1.9pc}
\newdir{ >}{{}*!/-5pt/\dir{>}}                  


\addtolength{\textwidth}{2cm} \calclayout

\makeatletter

\renewcommand{\subsection}{\@startsection{subsection}{1}{0pt}{-3.25ex plus -1ex minus-.2ex}{1.5ex plus.2ex}{\normalfont\it}}
\renewcommand{\section}{\@startsection{section}{1}{\parindent}{3.5ex plus 1ex minus .2ex}{2.3ex plus.2ex}{\sc}}

\renewcommand{\phi}{\varphi}
\renewcommand{\leq}{\leqslant}
\renewcommand{\geq}{\geqslant}
\renewcommand{\epsilon}{\varepsilon}

\renewcommand{\kappa}{\varkappa}

\DeclareMathOperator{\Spt}{\mathbf{Spt}}
\DeclareMathOperator{\spec}{Spec}

 \DeclareMathOperator{\charr}{char}
\DeclareMathOperator{\hocolim}{hocolim}
\DeclareMathOperator{\sd}{sd} 
 
 \DeclareMathOperator{\mot}{mot}

\DeclareMathOperator{\Hom}{Hom} 
 
 \DeclareMathOperator{\id}{id}

 \DeclareMathOperator{\Mor}{Mor}
 \DeclareMathOperator{\colim}{colim}
\DeclareMathOperator{\Ho}{Ho}

 \DeclareMathOperator{\nis}{nis}

\newcommand{\lra}[1]{\bl{#1}\longrightarrow\relax}
\newcommand{\bl}[1]{\buildrel #1\over}
\newcommand{\cc}{\mathcal}
\newcommand{\bb}{\mathbb}

\newcommand{\op}{{\textrm{\rm op}}}

\newcommand{\wt}{\widetilde}

\newcommand{\A}{\mathbb{A}}
\newcommand{\Fr}{\operatorname{Fr}}

\newtheorem{thm}{Theorem}[section]
\newtheorem{prop}[thm]{Proposition}
\newtheorem*{sublem}{Sublemma}
\newtheorem{cor}[thm]{Corollary}
\newtheorem{lem}[thm]{Lemma}

\newtheorem{conj}{Conjecture}
\newtheorem{rem}[thm]{Remark}

\newtheorem{defs}[thm]{Definition}
\newtheorem{constr}[thm]{Construction}
\newtheorem{notn}[thm]{Notation}
\newtheorem*{framework}{General Framework}

\makeatother

\begin{document}

\footskip30pt


\title{Framed motives of algebraic varieties (after V.~Voevodsky)}
\author{Grigory Garkusha}
\address{Department of Mathematics, Swansea University, Singleton Park, Swansea SA2 8PP, United Kingdom}
\email{g.garkusha@swansea.ac.uk}

\author{Ivan Panin}
\address{St. Petersburg Branch of V. A. Steklov Mathematical Institute,
Fontanka 27, 191023 St. Petersburg, Russia}

\address{St. Petersburg State University, Department of Mathematics and Mechanics, Universitetsky prospekt, 28, 198504,
Peterhof, St. Petersburg, Russia}

\email{paniniv@gmail.com}

\dedicatory{In memory of Vladimir Voevodsky}

\thanks{The second author was supported by the Russian Science Foundation (grant no. 14-21-00035).}

\begin{abstract}
Using the theory of framed correspondences developed by
Voevodsky~\cite{Voe2}, we introduce and study framed motives
of algebraic varieties. They are the major computational tool for constructing
an explicit quasi-fibrant motivic replacement of the suspension $\bb P^1$-spectrum
of any smooth scheme $X\in Sm/k$. Moreover, it is shown that the bispectrum
   $$(M_{fr}(X),M_{fr}(X)(1),M_{fr}(X)(2),\ldots),$$
each term of which is a twisted framed motive of $X$, has motivic
homotopy type of the suspension bispectrum of $X$. Furthermore, an explicit
computation of infinite $\mathbb P^1$-loop motivic spaces is given in terms of spaces with
framed correspondences. We also introduce big framed motives of bispectra and
show that they convert the classical Morel--Voevodsky motivic
stable homotopy theory into an equivalent local theory of framed bispectra.
{\it As a topological application}, it is proved
that the framed motive $M_{fr}(pt)(pt)$ of the point $pt=\spec k$
evaluated at $pt$ is a quasi-fibrant model of the classical sphere
spectrum whenever the base field $k$ is algebraically closed of
characteristic zero.
\end{abstract}

\keywords{Motivic homotopy theory, framed sheaves, triangulated categories}

\subjclass[2010]{14F42, 14F05, 18G55, 55Q10, 55P42}

\maketitle

\thispagestyle{empty} \pagestyle{plain}

\newdir{ >}{{}*!/-6pt/@{>}} 

\tableofcontents

\section{Introduction}In~\cite{Voe2} Voevodsky develops the theory of (pre-)sheaves with
framed correspondences. One of its aims was to suggest another
framework for stable motivic homotopy theory more amenable to
explicit calculations (see his Nordfjordeid Lectures~\cite[Remark
2.15]{DLORV} or his unpublished notes~\cite{Voe2}). Recall that
Voevodsky~\cite[Section~2]{Voe2} invented a category of framed
correspondences $Fr_*(k)$ whose objects are those of $Sm/k$ and
morphisms sets $Fr_*(X,Y)=\sqcup_{n\geq 0} Fr_n(X,Y)$ are defined by
means of certain geometric data (see Section~\ref{ohoho} below).
Following Voevodsky~\cite{Voe2}, we put for every $Y\in Sm/k$ (see
Section~\ref{ohoho} below):
\[
Fr(X,Y):=\colim(Fr_0(X,Y)\xrightarrow{\sigma_Y}Fr_1(X,Y)\xrightarrow{\sigma_Y}\dots
\xrightarrow{\sigma_Y} Fr_n(X,Y)\xrightarrow{\sigma_Y}\dots)
\]
and refer to it as the \textit{set of stable framed
correspondences}. Replacing $Y$ by a simplicial object $Y^\bullet$
in $Sm/k$, we get a simplicial set $Fr(X,Y^\bullet)$. Finally, one
can take the diagonal of the pointed bisimplicial set
$Fr(\Delta^\bullet \times X,Y^\bullet)$. {\it Voevodsky
conjectured\/} that if the motivic space $Fr(\Delta^\bullet \times
-,Y^\bullet)$ is locally connected in the Nisnevich topology, then
it is isomorphic in $H_{\mathbb A^1}(k)$ to the motivic space
$\Omega^{\infty}_{\mathbb P^1}\Sigma^{\infty}_{\mathbb
P^1}(Y^\bullet_+)$. This shows that the theory of framed
correspondences can give a machinery for computing/producing motivic
infinite loop spaces.

In this paper {\it we prove the conjecture in affirmative\/} (see
Theorem \ref{infloopspaces}) providing that the base field is
infinite perfect of characteristic different from 2. This paper {\it
shows\/} that the theory of framed correspondences {\it indeed gives
a machinery for computing/producing motivic infinite loop spaces.}
To prove the conjecture, we introduce and study the theory of framed
motives as well as the theory of big framed motives. The main aim of
the  machinery of framed motives is to find an explicit $\bb
A^1$-local replacement of the functor
   $$\Omega^\infty_{\bb G}\Sigma^\infty_{\bb G}\Sigma^\infty_{S^1}: H_{\bb A^1}(k)\to SH_{S^1}(k).$$
To this end, we firstly regard $SH_{S^1}(k)$ as {\it a full
subcategory\/} of the ordinary stable homotopy category
$SH^{\nis}_{S^1}(k)$ consisting of $\mathbb A^1$-local spectra. Then
in Theorem \ref{infloopspectra}(2) we construct an explicit functor
$M_{fr}: H_{\bb A^1}(k)\to SH_{S^1}(k)$ together with a functor
isomorphism
   $$\alpha: \Omega^{\infty}_{\mathbb G}\Sigma^\infty_{\bb G}\Sigma^{\infty}_{S^1} \to M_{fr}.$$
To formulate the main goal of the theory of big framed motives,
consider a full subcategory $SH^{fr}_{\nis}(k)$ of $SH(k)$
consisting of framed bispectra $E$ such that for any $i,j\geq 0$ the
simplicial framed sheaf $E_{i,j}$ is $\bb A^1$-local regarded as an
ordinary motivic space and $\{E^f_{i,j}\}$ regarded as the ordinary
bispectrum is stably motivically fibrant in the stable motivic model
structure. Here ``$f$" refers to a {\it fibrant\/} replacement in
the local model structure on $sShv_\bullet(Sm/k)$.

The the main result of the theory of big framed motives are
Theorems~\ref{a1localspace1} and~\ref{a1localspace2}.
Theorem~\ref{a1localspace1} says that an explicitly constructed
functor
   $$\cc M^b_{fr}: SH(k)\to SH_{\nis}^{fr}(k)$$
converts classical Morel--Voevodsky's stable motivic homotopy theory
$SH(k)$ into an equivalent local homotopy theory of $\bb A^1$-local
framed bispectra from $SH_{\nis}^{fr}(k)$, {\it and thus producing a
new approach to stable motivic homotopy theory}. The main ingredient
of this equivalent local homotopy theory is framed motivic spaces of
the form $C_*Fr(-,Y)$ with $Y\in\Delta^{\op}Fr_0(k)$ a simplicial
scheme as well as their framed motives $M_{fr}(Y)$.
Theorem~\ref{a1localspace2} states that morphisms in $SH(k)$ between
two bispectra $E$ and $E'$ is the set $\pi_0(E^c,\cc
M_{fr}^b(E')^f)$ of ordinary morphisms between bispectra $E^c$ and
$\cc M_{fr}^b(E')_f$ modulo the naive homotopy.

Let us indicate some applications of the theory of framed motives and big framed motives.
The fact that $\alpha$ is a functor isomorphism yields the following statement: if the base field $k$
is perfect infinite with $\charr k\neq2$, then
for any $X\in Sm/k$ and any simplicial object $Y^\bullet$ in $Sm/k$ one has a canonical isomorphism
\begin{equation}\label{KeyAdjunction}
SH(k)(\Sigma^{\infty}_{\bb G} \Sigma^{\infty}_{S^1}X_+,
\Sigma^{\infty}_{\bb G} \Sigma^{\infty}_{S^1}Y^\bullet_+[n])=
SH_{S^1}^{\nis}(k)(\Sigma^{\infty}_{S^1}X_+,M_{fr}(Y^\bullet)[n]),\quad n\geq 0,
\end{equation}
(see Theorem~\ref{applic}).
In particular, the isomorphism $\alpha$ yields that for any
simplicial object $X^{\bullet}$ in $Sm/k$ the projection
$X^{\bullet}\times \mathbb A^1 \to X^{\bullet}$ induces an isomorphism
$M_{fr}(X^{\bullet}\times \mathbb A^1)\cong M_{fr}(X^{\bullet})$.
Furthermore, the functor $M_{fr}$ converts
any elementary distinguished Nisnevich square of $k$-smooth varieties to
the ordinary Mayer--Vietoris exact triangle (see Theorem~\ref{mvproperty}).
Another important property of
framed motives is as follows: given a morphism $\phi: Y^\bullet \to
Z^\bullet$ of simplicial objects in $Sm/k$ such that
the morphism $\Sigma^{\infty}_{\bb G} \Sigma^{\infty}_{S^1}(\phi)$
is an isomorphism in $SH(k)$, then the morphism $M_{fr}(\phi)$ is a
local equivalence.

By definition, the framed motive of a smooth scheme $X\in Sm/k$ over
a field $k$ is a motivic $S^1$-spectrum $M_{fr}(X)$ whose terms are
certain explicit motivic spaces with framed correspondences (see
Definition~\ref{frmotive}). We use framed motives to construct an
explicit quasi-fibrant motivic replacement (i.e. an
$\Omega$-spectrum in positive degrees) of the suspension $\bb
P^1$-spectrum $\Sigma_{\bb P^1}^\infty X_+$ in
Theorem~\ref{Segal_Thm_II} (here $\bb P^1$ is pointed at $\infty$).
Another application is to show in
Theorem~\ref{Motivic_Segal_Thm_III} that an explicitly constructed
bispectrum
   $$M_{fr}^{\bb G}(X)=(M_{fr}(X),M_{fr}(X)(1),M_{fr}(X)(2),\ldots),$$
each term of which is a twisted framed motive of $X$, has motivic
homotopy type of the suspension bispectrum $\Sigma^{\infty}_{\bb G} \Sigma^{\infty}_{S^1}X_+$ of $X$. Moreover,
if we take the stable local fibrant replacement $M_{fr}(X)(n)_f$ of each twisted
framed motive then the bispectrum
   $$M_{fr}^{\bb G}(X)_f=(M_{fr}(X)_f,M_{fr}(X)(1)_f,M_{fr}(X)(2)_f,\ldots)$$
is motivically fibrant by~\cite[Theorem~A]{AGP}. These definitions and results equally hold for simplicial objects
in the category $Sm/k$. We should point out that for any simplicial object
$Y^\bullet$ in $Sm/k$ there is a canonical morphism of bispectra
   $$M_{fr}^{\bb G}(Y^\bullet)\to \cc M^b_{fr}(\Sigma^{\infty}_{\bb G} \Sigma^{\infty}_{S^1}Y^\bullet_+)$$
which is an isomorphism in $SH(k)$. Thus the composite morphism
$\Sigma^{\infty}_{\bb G} \Sigma^{\infty}_{S^1}Y^\bullet_+ \to M_{fr}^{\bb G}(Y^\bullet)
\to \cc M^b_{fr}(\Sigma^{\infty}_{\bb G} \Sigma^{\infty}_{S^1}Y^\bullet_+)$
is an isomorphism in $SH(k)$. This yields equalities (see Theorem~\ref{a1localspace2})
   $$SH(k)(\Sigma^{\infty}_{\bb G}\Sigma^{\infty}_{S^1}X_+, \Sigma^{\infty}_{\bb G} \Sigma^{\infty}_{S^1}Y^\bullet_+)
       =\pi_0(\cc M^b_{fr}(\Sigma^{\infty}_{\bb G} \Sigma^{\infty}_{S^1}Y^\bullet_+)_{0,0}^f(X)).$$
Here ``$f$" refers to a {\it fibrant\/} replacement in the local model structure on $sShv_\bullet(Sm/k)$.

Let us also give some applications of isomorphism~\eqref{KeyAdjunction}.
Since $M_{fr}(Y^\bullet)$ is a sheaf of Segal $S^1$-spectra, it
follows that for any $n<0$ the Nisnevich sheaf $\pi_{n,0}^{\bb
A^1}(\Sigma^{\infty}_{\bb G} \Sigma^{\infty}_{S^1}Y^\bullet_+)$
vanishes. By varying $Y^\bullet$ one gets a much stronger vanishing
property. Namely, for any $n<r$ one has $\pi_{n,r}^{\bb
A^1}(\Sigma^{\infty}_{\bb G} \Sigma^{\infty}_{S^1}Y^\bullet_+)=0$.
We can also compute $\pi_{n,r}^{\bb A^1}(\Sigma^{\infty}_{\bb G}
\Sigma^{\infty}_{S^1}Y^\bullet_+)$ for $n=r$ with $r\leq 0$ as
   $$\pi_{-n,-n}^{\bb A^1}(\Sigma^\infty_{S^1}\Sigma^\infty_{\bb G} Y^\bullet_+)(K)=H_0(\bb
     ZF(\Delta_K^\bullet,Y^\bullet\times\bb G_m^{\wedge n})),\quad n\geq 0.$$
Here $K/k$ is any field extension and $\bb
ZF(\Delta_K^\bullet,Y^\bullet \times\bb G_m^{\wedge n}))$ is an
explicit chain complex of free abelian groups. If $X=\spec(k)$,
$\charr k=0$, then using Neshitov's computation $H_0(\bb
ZF(\Delta_K^\bullet,X\times\bb G_m^{\wedge n}))=K^{MW}_n(K)$~\cite{Nesh}, we
recover the celebrated theorem of Morel~\cite{Mor1} for Milnor--Witt
$K$-theory for fields of characteristic zero.

We also give an explicit computation of the suspension functor (see Theorem~\ref{infloopspaces})
   $$\Sigma^\infty_{\bb P^1}:H_{\bb A^1}(k)\to SH(k).$$
It is isomorphic to an explicitly constructed functor
   $$M_{\bb P^{\wedge 1}}:H_{\bb A^1}(k)\to SH(k)$$
that takes a motivic space to a spectrum consisting of spaces with framed correspondences.
As an application, an explicit computation of the space $\Omega^{\infty}_{\bb P^1}\Sigma^{\infty}_{\bb P^1}(\cc X)$
is given for any motivic space $\cc X$ (see Theorem~\ref{infloopspaces}). Thus the machinery
of framed motives leads to explicit computations of infinite $\bb P^1$-loop spaces.

{\it As a topological application}, using the machinery of framed motives together with
a theorem of Levine~\cite{Levine}, one
shows in Theorem~\ref{corres} that the framed motive of the point $M_{fr}(pt)(pt)$,
$pt=\spec k$, evaluated at $pt$ is a quasi-fibrant model (i.e. an
$\Omega$-spectrum in positive degrees) of the classical sphere
spectrum if the base field $k$ is algebraically closed of
characteristic zero. In particular, for any $n\geq 1$ the simplicial set
$C_*Fr(pt,pt\otimes S^n)$ has the homotopy type of
$\Omega^\infty_{S^1}\Sigma^\infty_{S^1}S^n$.

Finally, we show that every $\bb P^1$-spectrum $E$ is actually isomorphic in $SH(k)$ to
a framed spectrum in a canonical way, functorially in $E$. Furthermore, there is an equivalence of categories
   $$SH(k)\lra{\cong} SH^{fr}(k),$$
where $SH^{fr}(k)$ is a full subcategory of framed $\bb P^1$-spectra (see
Theorem~\ref{shfrsp}).

The main result of~\cite{GP4} says that for any $\bb A^1$-invariant
quasi-stable radditive framed presheaf of Abelian groups $\cc F$,
the associated Nisnevich sheaf $\cc F_{\nis}$ is strictly $\bb
A^1$-invariant whenever the base field $k$ is infinite perfect of
characteristic different from 2. The assumption on the
characteristic is needed to prove the ``Surjective \'{e}tale
excision theorem" in~\cite{GP4}. {\it It is for this reason that the
main results for framed spaces, framed motives or framed
spectra/bispectra are formulated and proven in this paper under the
assumption that the base field $k$ is infinite perfect of
characteristic different from 2\/} (if $\charr k=2$ we formulate and
prove the same results for spaces, framed motives or framed
spectra/bispectra ``with 1/2-coefficients" only). However, as soon
as the mentioned result of~\cite{GP4} about strict $\bb
A^1$-invariance of $\cc F_{\nis}$ is extended to (infinite) perfect
fields of any characteristic, then the main results of this paper
will automatically be extended to such fields as well.

{\it The authors dedicate the paper to the memory of Vladimir Voevodsky
who had kindly given his unpublished notes on framed
correspondences~\cite{Voe2} to us in 2010.
}

The authors are very grateful to Andrei Suslin for many helpful
discussions. This paper was partly written during the visit of the
authors in summer 2014 to the University of Duisburg--Essen (Marc
Levine's Arbeitsgruppe). The authors also made major revisions of
the original text during their visit to the Mittag-Leffler Institute
in Stockholm in 2017. They would like to thank the University of
Duisburg--Essen and the Mittag-Leffler Institute for the kind
hospitality and support.

Throughout the paper we denote by $Sm/k$ the category of smooth
separated schemes of finite type over the base field $k$.

\section{Voevodsky's framed correspondences}\label{ohoho}

In this section we collect basic facts for framed correspondences
and framed functors in the sense of Voevodsky~\cite{Voe2}. We start
with preparations.

Let $S$ be a scheme and $Z$ be a closed subscheme. Recall that an
{\it \'{e}tale neighborhood of $Z$ in $S$\/} is a triple
$(W',\pi':W'\to S,s': Z\to W')$ satisfying the conditions:

\indent (i) $\pi'$ is an \'{e}tale morphism;

\indent (ii) $\pi'\circ s'$ coincides with the inclusion
$Z\hookrightarrow S$ (thus $s'$ is a closed embedding);

\indent (iii) $(\pi')^{-1}(Z)=s'(Z)$

A morphism between two \'{e}tale neighborhoods
$(W',\pi',s')\to(W'',\pi'',s'')$ of $Z$ in $S$
is a morphism $\rho:W'\to W''$ such that $\pi''\circ\rho=\pi'$ and
$\rho\circ s'=s''$. Note that such $\rho$ is automatically \'etale
by~\cite[VI.4.7]{LNM146}.

\begin{defs}
[Voevodsky~\cite{Voe2}]{\rm




For $k$-smooth schemes $X, Y$ and $n\geq 0$ an {\it explicit framed
correspondence\/}  $\Phi$ of level $n$ consists of the following
data:

\begin{enumerate}
\item a closed subset $Z$ in $\mathbb A^n_X$ which is finite over $X$;
\item an etale neighborhood $p:U\to\mathbb A^n_X$ of $Z$ in $\mathbb A^n_X$;
\item a collection of regular functions $\phi=(\phi_1,\ldots,\phi_n)$ on $U$
such that $\cap_{i=1}^n \{\phi_i=0\}=Z$;
\item a morphism $g:U\to Y$.
\end{enumerate}
The subset $Z$ will be referred to as the {\it support\/} of the
correspondence. We shall also write triples $\Phi=(U,\phi,g)$ or
quadruples $\Phi=(Z,U,\phi,g)$ to denote explicit framed
correspondences.

Two explicit framed correspondences $\Phi$ and $\Phi'$ of level $n$
are said to be {\it equivalent\/} if they have the same support and
there exists an open neighborhood $V$ of $Z$ in $U \times_{\bb
A^n_X} U'$ such that on $V$, the morphism $g\circ pr$ agrees with
$g' \circ pr'$ and $\phi \circ pr$ agrees with $\phi' \circ pr'$. A
{\it framed correspondence of level $n$} is an equivalence class of
explicit framed correspondences of level $n$.

}\end{defs}

Let $Fr_n(X,Y)$ denote the set of framed correspondences from
$X$ to $Y$. We consider it as a pointed set with the basepoint
being the class $0_n$ of the explicit correspondence with
$U=\emptyset$.

As an example, the sets $Fr_0(X,Y)$ coincide
with the set of pointed morphisms $X_+\to Y_+$. In particular, for a
connected scheme $X$ one has\label{fr0}
   $$Fr_0(X,Y)=\text{Hom}_{Sm/k}(X,Y)\sqcup\{0_0\}.$$

If $f:X'\to X$ is a morphism of schemes and $\Phi=(U,\phi,g)$ an
explicit correspondence from $X$ to $Y$ then
   $$f^*(\Phi):=(U'=U\times_X X',\phi\circ pr,g\circ pr)$$
is an explicit correspondence from $X'$ to $Y$.

\begin{rem}\label{short_Notation}
{\rm
Let $\Phi=(Z,\mathbb A^n_X \xleftarrow{p} U,\phi: U \to \mathbb
A^n_k,g: U\to Y) \in Fr_n(X,Y)$ be an {\it explicit framed
correspondence  of level n}. It can more precisely be written in the
form
\[
((\alpha_1,\alpha_2,\dots,\alpha_n),f,Z,U,(\varphi_1,\varphi_2,\dots,\varphi_n),g)\in Fr_n(X,Y)
\]
where
\begin{itemize}
\item[$\diamond$]
$Z\subset \A^n_X$ is a closed subset finite over $X$,
\item[$\diamond$]
an etale neighborhood
$(\alpha_1,\alpha_2,\dots,\alpha_n),f)=p: U \to \mathbb A^n_k \times X$ of $Z$,
\item[$\diamond$]
a collection of regular functions $\phi=(\phi_1,\ldots,\phi_n)$ on
$U$ such that $\cap_{i=1}^n \{\phi_i=0\}=Z$;
\item[$\diamond$] a morphism $g:U\to Y$.
\end{itemize}
We shall usually drop $((\alpha_1,\alpha_2,\dots,\alpha_n),f)$ from
notation and just write
   \[(Z,U,(\varphi_1,\varphi_2,\dots,\varphi_n),g)=((\alpha_1,\alpha_2,\dots,\alpha_n),
    f,Z,U,(\varphi_1,\varphi_2,\dots,\varphi_n),g).\]
}\end{rem}

The following definition is to describe compositions of framed
correspondences.

\begin{defs}\label{cat_Fr_+}{\rm
Let $X,Y$ and $S$ be $k$-smooth schemes and let
\begin{gather*}
a=((\alpha_1,\alpha_2,\dots,\alpha_n),f,Z,U,(\varphi_1,\varphi_2,\dots,\varphi_n),g)
\end{gather*}
be an explicit correspondence of level $n$ from $X$
to $Y$ and let
\begin{gather*}
b=((\beta_1,\beta_2,\dots,\beta_m),f',Z',U',(\psi_1,\psi_2,\dots,\psi_m),g')\in Fr_m(Y,S)
\end{gather*}
be an explicit correspondence of level $m$ from $Y$ to $S$. We
define their composition as an explicit correspondence of level
$n+m$ from $X$ to $S$ by
\[
((\alpha_1,\alpha_2,\dots,\alpha_n,\beta_1,\beta_2,\dots,\beta_m),f,Z\times_Y Z',U\times_Y U',(\varphi_1,\varphi_2,\dots,\varphi_n,\psi_1,\psi_2,\dots,\psi_m),g').
\]
Clearly, the composition of explicit correspondences respects the
equivalence relation on them and defines associative maps
   \begin{equation*}\label{compos}
    Fr_n(X,Y)\times Fr_m(Y,S)\to Fr_{n+m}(X,S).
   \end{equation*}
}\end{defs}

Given $X, Y\in Sm/k$, denote by $Fr_+(X,Y)$ the set $\bigvee_n
Fr_n(X,Y)$. The composition of framed correspondences defined above
gives a category $Fr_+(k)$. Its objects are those of $Sm/k$ and the
morphisms are given by the sets $Fr_+(X,Y)$, $X, Y\in Sm/k$. Since
the naive morphisms of schemes can be identified with certain framed
correspondences of level zero, we get a canonical functor
   $$Sm/k\to Fr_+(k).$$
{\it The category $Fr_+(k)$ has the zero object.
It is the empty scheme.
}
One can easily see that for a framed correspondence $\Phi:X\to Y$
and a morphism $f:X'\to X$, one has $f^*(\Phi)=\Phi\circ f$.

\begin{defs}\label{d:boxpairing}{\rm
Let $X,Y,S$ and $T$ be smooth schemes. There is an \textit{external
product}
\[
Fr_n(X,Y)\times Fr_m(S,T) \xrightarrow{-\boxtimes -}
Fr_{n+m}(X\times S, Y\times T)
\]
given by
\begin{gather*}
((\alpha_1,\alpha_2,\dots,\alpha_n),f,Z,U,(\varphi_1,\varphi_2,\dots,\varphi_n),g)\boxtimes
((\beta_1,\beta_2,\dots,\beta_m),f',Z',U',(\psi_1,\psi_2,\dots,\psi_m),g')=
\\
((\alpha_1,\alpha_2,\dots,\alpha_n,\beta_1,\beta_2,\dots,\beta_m),f\times
f',Z\times Z',U\times
U',(\varphi_1,\varphi_2,\dots,\varphi_n,\psi_1,\psi_2,\dots,\psi_m),g\times
g').
\end{gather*}

For the constant morphism $c\colon \A^1\to pt$, we set (following
Voevodsky~\cite{Voe2})
\[
\Sigma=-\boxtimes (t,c,\{0\},\A^1,t,c)\colon Fr_n(X,Y)\to Fr_{n+1}(X,Y)
\]
and refer to it as the \textit{suspension}.

Also, following Voevodsky~\cite{Voe2}, one puts
\[
Fr(X,Y)=\colim(Fr_0(X,Y)\xrightarrow{\Sigma}Fr_1(X,Y)\xrightarrow{\Sigma}\dots
\xrightarrow{\Sigma} Fr_n(X,Y)\xrightarrow{\Sigma}\dots)
\]
and refer to it as the \textit{set stable framed correspondences}.
The above external product induces external products
\begin{gather*}
Fr_n(X,Y)\times Fr(S,T) \xrightarrow{-\boxtimes -} Fr(X\times S, Y\times T),\\
Fr(X,Y)\times Fr_0(S,T) \xrightarrow{-\boxtimes -} Fr(X\times S, Y\times T).
\end{gather*}

}\end{defs}

\begin{defs}\label{def:FrY/Y-S}{\rm
(I) Let $Y$ be a $k$-smooth scheme and $S\subset Y$ be a closed
subset and let $U\in Sm/k$. An {\it explicit framed correspondence
of level $m\geq 0$ from $U$ to $Y/(Y-S)$} consists of the tuples:
   $$(Z,W,\phi_1,\ldots,\phi_{m};g:W\to Y),$$
where $Z$ is a closed subset of $U\times\bb A^m$, finite over $U$,
$W$ is an \'{e}tale neighborhood of $Z$ in $U\times\bb A^m$,
$\phi_1,\ldots,\phi_{m}$ are regular functions on $W$, $g$ is a
regular map such that $Z=Z(\phi_1,\ldots,\phi_{m})\cap g^{-1}(S)$.
The set $Z$ is called the {\it support\/} of the explicit framed
correspondence. We shall also write quadruples $\Phi = (Z,W,\phi;g)$
to denote explicit framed correspondences.

(II) Two explicit framed correspondences $(Z,W,\phi;g)$ and
$(Z',W',\phi';g')$ of level $m$ are said to be {\it equivalent\/} if
$Z=Z'$ and there exists an \'{e}tale neighborhood $W''$ of $Z$ in
$W\times_{\bb A^m_U}W'$ such that $\phi\circ pr$ agrees with
$\phi'\circ pr'$ and the morphism $g\circ pr$ agrees with $g'\circ
pr'$ on $W''$.

(III) A {\it framed correspondence of level $m$ from $U$ to
$Y/(Y-S)$\/} is the equivalence class of an explicit framed
correspondence of level $m$ from $U$ to $Y/(Y-S)$. We write
$Fr_m(U,Y/(Y-S))$ to denote the set of framed correspondences of
level $m$ from $U$ to $Y/(Y-S)$. We regard it as a pointed set whose
distinguished point is the class $0_{Y/(Y-S),m}$ of the explicit
correspondence $(Z,W,\phi;g)$ with $W=\emptyset$.

(IV) If $S=Y$ then the pointed set $Fr_m(U,Y/(Y-S))$
coincides with the pointed set
$Fr_m(U,Y)$ of framed correspondences of
level $m$ from $U$ to $Y$.
}\end{defs}

\begin{defs}{\rm
A {\it framed presheaf\/} $\cc F$ on $Sm/k$ is a contravariant
functor from $Fr_+(k)$ to the category of sets. A {\it framed
functor\/} $\cc F$ on $Sm/k$ is a contravariant functor from
$Fr_+(k)$ to the category of pointed sets such that $\cc
F(\emptyset)=pt$ and $\cc F(X\sqcup Y)=\cc F(X)\times\cc F(Y)$.

A framed Nisnevich sheaf on $Sm/k$ is a framed presheaf $\cc F$ such that
its restriction to $Sm/k$ is a Nisnevich sheaf.

}\end{defs}

Note that the representable presheaves on $Fr_+(k)$ are not framed functors.

\begin{constr}\label{c:framed_presh_Y_Y-S}{\rm
We set $Fr_+(-,Y/(Y-S)):=\bigvee_{m\geq 0} Fr_m(-,Y/(Y-S))$ and define the structure of a framed presheaf on it as follows.
Let $X,Y$ and $S$ be $k$-smooth schemes and let
\begin{equation*}
\Psi=(Z',\A^k\times V\xleftarrow{(\alpha,\pi')}
W',\psi_1,\psi_2,\dots,\psi_k;g:W'\to U)\in Fr_k(V,U)
\end{equation*}
be an explicit correspondence of level $k$ from $V$ to $U$. Suppose
\begin{equation*}
\Phi=(Z,\A^m\times U\xleftarrow{(\beta,\pi)}
W,\varphi_1,\varphi_2,\dots,\varphi_m;g':W\to Y)\in Fr_m(U,Y/(Y-S))
\end{equation*}
is an explicit correspondence of level $m$ from $U$ to $Y/(Y-S)$. We
define $\Psi^{*}(\Phi)$ as an explicit correspondence of level $k+m$
from $V$ to $Y/(Y-S)$ as\footnotesize
$$(Z\times_U Z',\A^{k+m}\times V\xleftarrow{(\alpha,\beta,\pi')} W'\times_U W,\psi_1,\psi_2,\dots,\psi_k,\varphi_1,\varphi_2,\dots,\varphi_m,,g'\circ pr_W)\in
Fr_{k+m}(V,Y/(Y-S)).$$
\normalsize Clearly, the pullback operation $(\Psi,\Phi)\mapsto \Psi^{*}(\Phi)$
of explicit correspondences respects the equivalence relation on
them. We get a pairing
   \begin{equation}\label{eq:compos}
    Fr_k(V,U)\times Fr_m(U,Y/(Y-S))\to Fr_{k+m}(V,Y/(Y-S))
   \end{equation}
making $Fr_+(-,Y/(Y-S))$ a $Fr_+(k)$-presheaf.
}
\end{constr}

Denote by
\begin{equation}\label{eq:Sigma}
\sigma_{Y/(Y-S)}: Fr_m(U,Y/(Y-S))\to Fr_{m+1}(U,Y/(Y-S))
\end{equation}
a map, which takes $\Phi = (Z,W,\phi;g)$ to $(Z\times \{0\},W\times \A^1,\phi\circ pr_W,pr_{\A^1};g)$.
\smallskip

Following Voevodsky~\cite{Voe2} we give the following

\begin{defs}\label{def:st_fr_corr}{\rm
We shall refer to the set
\begin{multline*}
Fr(U,Y/(Y-S)):= \\
=\colim(Fr_0(U,Y/(Y-S))\xrightarrow{\sigma_{Y/(Y-S)}} Fr_1(U,Y/(Y-S))
\xrightarrow{\sigma_{Y/(Y-S)}} Fr_2(U,Y/(Y-S)) \cdots)
\end{multline*}
as the \textit{set of stable framed correspondences from $U$ to $Y/(Y-S)$}.
}\end{defs}

\begin{rem}\label{def:Fr_Y_Y-S_as_framed_presh}{\rm
It is straightforward to check that
for any framed correspondences $\Psi\in Fr_n(U',U)$ and $\Phi\in Fr_m(U,Y/(Y-S)$
one has $\sigma_{Y/(Y-S)}(\Psi^{*}(\Phi))=\Psi^*(\sigma_{Y/(Y-S)}(\Phi))$.
This shows that
the assignment
$U\mapsto Fr(U,Y/(Y-S))$ from Definition
\ref{def:st_fr_corr}
is {\it a framed presheaf}.
}
\end{rem}

For a scheme $X$ we let $Et/X$ denote the category of schemes separated and \'{e}tale over $X$.

\begin{thm}[Voevodsky \cite{Voe2}] \label{Fr_is_sheaf}
Let $X$ be a $k$-smooth scheme.
Then for any scheme Y the functor $U\mapsto Fr_n(U,Y )$ from $Et/X$ to $Sets_\bullet$ is a sheaf in the etale topology.
\end{thm}

The proof of the latter theorem given in \cite{Voe2} yields the following

\begin{cor} \label{Fr_is_sheaf_2}
Given $Y\in Sm/k$ and any closed subset $S$ in $Y$, the presheaf $Fr_n(-,Y/(Y-S))$ on $Sm/k$ is a pointed Nisnevich sheaf.
Also, the framed presheaf $Fr(-,Y/(Y-S))$ is a framed Nisnevich sheaf.
\end{cor}

\section{The Voevodsky Lemma}\label{voevlem}

In this section we discuss the Voevodsky lemma computing framed correspondences in terms of
morphisms of associated Nisnevich sheaves. It is crucial in our analysis.
Corollary~\ref{cor:sheaf_and_geometry} and a sketch of its proof was communicated to us by A.~Suslin.
It very much helped the authors in understanding Voevodsky's notes \cite{Voe2}.

\begin{constr}\label{c:Fr_and_hom_2}{\rm
Given an explicit framed correspondence
$\alpha=(Z,W,g: W\to Y)$ from $X$ to $Y/(Y-S)$ of level zero, consider an elementary distinguished square of the form
$$\xymatrix{W-Z\ar[r]^{in}\ar[d]_{(\rho)|_{W-Z}}& W\ar[d]^{\rho}\\
             X-Z    \ar[r]_(.55){in}& X}$$
where $\rho:W\to X$ is an \'{e}tale neighborhood of $Z$. Let
$q: Y \to F:=Y/(Y-S)$ be the canonical morphism of Nisnevich sheaves. Take a morphism of sheaves
$q\circ g: W\to F$ and a morphism of sheaves $c: X-Z \to F$ sending
$X-Z$ to the distinguished point of $F$.
These two morphisms agree on $W-Z$. Thus there is a unique morphism of Nisnevich sheaves
   $$s_{(Z,W,g)}: X \to F$$
such that $s_{(Z,W,g)}\circ in=c$ and
$s_{(Z,W,g)}\circ \rho=q\circ g$. Clearly, the sheaf morphism
$s_{(Z,W,g)}$ depends only on the equivalence class of $(Z,W,g)$ in $Fr_0(U,Y/(Y-S))$.
The assignment $(Z,W,g)\mapsto s_{(Z,W,g)}$ defines a map of pointed sets
   $$a_{X,Y/(Y-S)}: Fr_0(X,Y/(Y-S)) \to \Mor_{Shv}(X,Y/(Y-S)).$$
The map $a_{X,Y/(Y-S)}$ is natural in $X$ with respect to morphisms of smooth varieties.
Hence $a_{Y/(Y-S)}: Fr_0(-,Y/(Y-S)) \to \Mor_{Shv}(-,Y/(Y-S))$
is a morphism of presheaves on the category $Sm/k$. Using Corollary~\ref{Fr_is_sheaf_2}, the morphism}
   $$a_{Y/(Y-S)}: Fr_0(-,Y/(Y-S)) \to \Mor_{Shv}(-,Y/(Y-S))$$
{\it is a morphism of Nisnevich sheaves on $Sm/k$}.
\end{constr}

\begin{lem}[Voevodsky's Lemma]\label{l:sheaf_and_geometry}
Let $Y$ be a $k$-smooth scheme and $S\subset Y$ be a closed subset.
The morphism of pointed Nisnevich sheaves
\begin{equation}
a_{Y/(Y-S)}: Fr_0(-,Y/(Y-S)) \to \Mor_{Shv}(-,Y/(Y-S))
\end{equation}
is an isomorphism.
\end{lem}

\begin{proof}
Since $a_{Y/(Y-S)}$ is a morphism of Nisnevich sheaves, it suffices to check that for any
essentially $k$-smooth local Henzelian $U$ the map $a_{X,Y/(Y-S)}$ is a bijection.
Let $U$ be local essentially smooth Henselian with the closed point $u\in U$.
Since $U$ is local Henzelian the following holds: for any non-empty closed subset
$Z$ in $U$ the henzelization $U^h_Z$ of $U$ at $Z$ coincides with $U$ itself.
This shows that
   $$Fr_0(U,Y/(Y-S))-0_0=\{(Z,U,f: U\to Y)\mid Z=f^{-1}(S), Z\neq \emptyset \}=\{f: U\to Y|f(u)\in S \}.$$
Here $0_0$ is the distinguished point in $Fr_0(U,Y/(Y-S))$.
The map $a_{U,Y/(Y-S)}$ takes a triple $(Z,U,f)$ to the morphism $q\circ f: U\to Y/(Y-S)$,
where $q: Y\to Y/(Y-S)$ is the quotient map.

Let $Y_u(U)\subset Y(U)$ be the subset of $U$-points of $Y$ consisting of $g\in Y(U)$ with $g(u)\in S$.
Then the map $Y(U)=\Mor_{Shv}(U,Y)\to \Mor_{Shv_\bullet}(U,Y/(Y-S))$ taking $f\in Y(U)$ to $q\circ f$
identifies $Y_u(U)$ with the subset $\Mor_{Shv_\bullet}(U,Y/(Y-S))-*$. In fact,
\begin{equation}\label{eq:U_to_Y/Y-S}
\Mor_{Shv}(U,Y/(Y-S))=
(Y_u(U)\sqcup (Y-S)(U))/(Y-S)(U))=
Y_u(U)\sqcup *,
\end{equation}
where $*$ is a singleton. Hence the map
   $$a_{U,Y/(Y-S)}|_{Fr_0(U,Y/(Y-S))-\{0_0\}}: Fr_0(U,Y/(Y-S))-\{0_0\} \to \Mor_{Shv_\bullet}(U,Y/(Y-S))\setminus *$$
is a bijection. Thus the map $a_{U,Y/(Y-S)}$ is a bijection, too.
\end{proof}

\begin{cor}\label{cor:sheaf_and_geometry}
Let $Y$ be a $k$-smooth scheme and $S\subset Y$ be a closed subset. Let $X$ be a $k$-smooth variety and $B\subset X$ its closed subset.
Suppose $Fr_0(X/B,Y/(Y-S))$ is the subset of $Fr_0(X,Y/(Y-S))$ consisting of framed correspondences
$(Z,W,g)$ with $Z\cap B=\emptyset$. Then the map of pointed sets
  \begin{equation*}
   a_{X/B,Y/(Y-S)}: Fr_0(X/B,Y/(Y-S)) \to \Mor_{Shv_\bullet}(X/B,Y/(Y-S))
  \end{equation*}
is a bijection.
\end{cor}

\begin{proof}
Consider a commutative diagram
$$\xymatrix{Fr_0(X/B,Y/(Y-S)) \ar[rr]^{a_{X/B,Y/(Y-S)}}\ar[d]_{in}&& \Mor_{Shv_\bullet}(X/B,Y/(Y-S)) \ar[d]^{r^*}\\
             Fr_0(X,Y/(Y-S))    \ar[rr]_(.55){a_{X,Y/(Y-S)}}&& \Mor_{Shv}(X,Y/(Y-S))},$$
where $r^*$ is induced by the quotient map $r: X\to X/(X-S)$. Since $r$ is an epimorphism the map $r^*$ is
injective. The map $in$ is an inclusion by the definition of $Fr_0(X/B,Y/(Y-S))$.
By Lemma~\ref{l:sheaf_and_geometry} the map $a_{X,Y/(Y-S)}$ is bijective. Thus the map
$a_{X/B,Y/(Y-S)}$ is injective. It remains to check its surjectivity.

Let $g: X\to Y/(Y-S)$ be a Nisnevich sheaf morphism. It is in $\Mor_{Shv_\bullet}(X/B,Y/(Y-S))$
if and only if $g(B)$ is the distinguished point $*$ in $Y/(Y-S)$.

Let $g: X\to Y/(Y-S)$ be a Nisnevich sheaf morphism such that $g(B)=*$.
We claim that $g$ is in the image of $a_{X/B,Y/(Y-S)}$.
By Lemma~\ref{l:sheaf_and_geometry} there is an explicit framed correspondence
$(Z,W,\tilde g:W\to Y)$ from $X$ to $Y/(Y-S)$ such that
$g=a_{X,Y/(Y-S)}((Z,W,\tilde g: W\to Y))$.
The latter equality means that the morphism $g$ is unique
such that $g(X-Z)=*$ and $q\circ \tilde g=g\circ \rho$.
If $g(B)=*$, then $B\cap Z=\emptyset$. Indeed, if $b$ is a closed point of the closed subset
$B\cap Z$ then $*\not=\tilde g(b)\in S$. On the other hand, $\tilde g(b)=g(b)=*$. We see that
$B\cap Z=\emptyset$, $(Z,W,\tilde g:W\to Y)$
is in $Fr_0(X/B,Y/(Y-S))$, and $g=a_{X/B,Y/(Y-S)}((Z,W,\tilde g:W\to Y))$ as claimed.
\end{proof}

\begin{rem}\label{rem:Fr_0_and_Fr_n}{\rm
Let $n>0$ be an integer. Let $B_n\subset (\bb P^1)^n$ be a closed subset which is the union
of all subsets of the form $\bb P^1\times ... \times \{\infty\} \times ... \times \bb P^1$.
Set $B_0=\{\infty\}$. For any $X,Y\in Sm/k$ and any $n\geq 0$ the inclusion
$$Fr_n(X,Y)\subset Fr_0(X\times (\bb P^1)^n/X\times B_n,Y\times \bb A^n/Y\times (\bb A^n-\{0\}))$$
{\it is an equality}.
It suffices to check that any element $(Z,W,f:W\to Y\times \bb A^n)$ from
$Fr_0(X\times (\bb P^1)^n/X\times B_n,Y\times \bb A^n/Y\times (\bb A^n-\{0\}))$
is contained in $Fr_n(X,Y)$. Since $Z\cap (X\times B_n)=\emptyset$, it follows that
$Z\subset X\times \bb A^n$. Since $Z$ is closed in $X\times (\bb P^1)^n$,
then $Z$ is projective over $X$. Since $Z$ is also affine over $X$, it is finite over $X$.
Giving a morphism $f:W\to Y\times \bb A^n$ is the same as
giving $n$ functions $\phi_1,...,\phi_n$ and a morphism $g: W\to Y$.
The condition $Z=f^{-1}(Y\times \{0\})$ is equivalent to that of
$Z=\{\phi_1=...=\phi_n=0\}$. The desired equality is checked.
}\end{rem}

The preceding remark and Corollary~\ref{cor:sheaf_and_geometry} imply the following

\begin{prop}[Voevodsky]\label{voevych}
For any $X,Y\in Sm/k$ and any $n\geq 0$ the map
\begin{multline*}\label{vformula}
    a_{n,X,Y}=a_{X\times (\bb P^1)^n/X\times B_n,Y\times \bb A^n/Y\times (\bb A^n-\{0\})}:Fr_n(X,Y) \to \\
    \to \Hom_{Shv^{\nis}_{\bullet}(Sm/k)}(X_+\wedge(\bb P^1,\infty)^{\wedge n},Y_+\wedge(\bb A^1/(\bb A^1-0))^n)=\\
    =\Hom_{Shv^{\nis}_{\bullet}(Sm/k)}(X_+\wedge(\bb P^1,\infty)^{\wedge n},Y_+\wedge T^n).
   \end{multline*}
is a bijection. In what follows we shall write $\bb P^{\wedge n}$ for $(\bb P^1,\infty)^{\wedge n}$ and
$\Hom(X_+\wedge \bb P^{\wedge n},Y_+\wedge T^n)$ instead of $\Hom_{Shv^{\nis}_{\bullet}(Sm/k)}(X_+\wedge \bb P^{\wedge n},Y_+\wedge T^n)$.
We shall also write $\cc Fr_n(X,Y)$ to denote $\Hom(X_+\wedge \bb P^{\wedge n},Y_+\wedge T^n)$.
\end{prop}

Consider two categories $Fr_+(k)$ and $\cc Fr_+(k)$,
where the objects in both categories are those of $Sm/k$.
The category $Fr_+(k)$ is defined in \ref{cat_Fr_+}. The morphisms between $X$ and $Y$ in
$\cc Fr_+(k)$ are defined as $\bigvee_{n\geq 0} \cc Fr_n(X,Y)$.
The composition is defined as follows. Given two morphisms
$\alpha: X_+\wedge \bb P^{\wedge m} \to Y_+\wedge T^m$ and
$\beta: Y_+\wedge \bb P^{\wedge n} \to S_+\wedge T^n$,
define a morphism $\beta \circ \alpha \in \cc Fr_{m+n}(X,S)$
as the composite
   $$X_+\wedge \bb P^{\wedge m}\wedge \bb P^{\wedge n} \xrightarrow{\alpha\wedge\id} Y_+\wedge T^m\wedge \bb P^{\wedge n}\cong T^m\wedge Y_+\wedge \bb P^{\wedge n}
       \xrightarrow{\id\wedge \beta} T^m\wedge Y_+\wedge T^n\cong Y_+\wedge T^m\wedge T^n.$$
It is straightforward to check commutativity of the diagram
$$\xymatrix{Fr_m(X,Y)\times Fr_m(Y,S)\ar[r]^(.6){\circ}\ar[d]_{a\times a}& Fr_{m+n}(X,S) \ar[d]^{a}\\
            \cc Fr_m(X,Y)\times \cc Fr_n(Y,S)    \ar[r]^(.6){\circ}&
            \cc Fr_{m+n}(X,S).  }$$

These observations imply the following

\begin{cor}\label{Fr_+_and_Fr_hom_+}
The functor
   $$a: Fr_+(k)\to \cc Fr_+(k)$$
is an isomorphism of categories.
\end{cor}

It is also worth to make the following

\begin{rem}\label{rem:Fr_0_and_Fr_n_2}{\rm
One has that
   $$Fr_n(X,Y/(Y-S))\subset Fr_0(X\times (\bb P^1)^n/X\times B_n,Y\times \bb A^n/(Y\times \bb A^n - S\times \{0\}))$$
{\it is an equality}. It suffices to check that any element $(Z,W,f:W\to Y\times \bb A^n)$ from
$Fr_0(X\times (\bb P^1)^n/X\times B_n,Y\times \bb A^n/(Y\times \bb A^n - S\times \{0\}))$
is contained in $Fr_n(X,Y(Y-S))$. Since $Z\cap (X\times B_n)=\emptyset$, then
$Z\subset X\times \bb A^n$. Since $Z$ is closed in $X\times (\bb P^1)^n$,
then $Z$ is projective over $X$. Since $Z$ is affine over $X$, it is also finite over $X$.
Giving a morphism $f:W\to Y\times \bb A^n=\bb A^n\times Y$ is the same as
giving $n$ functions $\phi_1,...,\phi_n$ and a morphism $g: W\to Y$.
The condition $Z=f^{-1}(S\times \{0\})$ is equivalent to that of
$Z=\{\phi_1=...=\phi_n=0\} \cap g^{-1}(S)$. The equality is checked.
}
\end{rem}

The previous remark and Corollary~\ref{cor:sheaf_and_geometry} imply the following

\begin{prop}[Voevodsky]\label{voevych_2}
For any $X,Y\in Sm/k$ and any $n\geq 0$, the map
$$a_{n,X,Y/(Y-S)}:Fr_n(X,Y/(Y-S)) \to
    \Hom_{Shv_\bullet}(X_+\wedge \bb P^{\wedge n},Y\times \bb A^n/(Y\times \bb A^n-S\times \{0\}))=$$
$$=\Hom_{Shv_\bullet}(X_+\wedge \bb P^{\wedge n},Y/(Y-S)\wedge T^n).$$
is a bijection. For brevity, we shall write
$\cc Fr_n(X,Y/(Y-S))$ to denote $\Hom_{Shv_\bullet}(X_+\wedge \bb P^{\wedge n},Y/(Y-S)\wedge T^n)$.
\end{prop}

\begin{rem}\label{lem: Fr_hom F)}{\rm
For any pointed Nisnevich sheaf $\cc F$, the presheaf
$\cc Fr_+(\mathcal F):=\bigvee_{n\geq 0} \underline{\Hom}(\bb P^{\wedge n},\mathcal F\wedge T^n)$
is a framed presheaf. Indeed,
define for any $U,X\in Sm/k$ and any $m,n$ a map of pointed sets
$$\text{Hom}(U_+\wedge \bb P^{\wedge m},X_+\wedge T^m) \times \text{Hom}(X_+\wedge \bb P^{\wedge n},\mathcal F\wedge T^n) \to
\text{Hom}(U_+\wedge \bb P^{\wedge (m+n)},\mathcal F\wedge T^{m+n}).$$
If $\alpha: U_+\wedge \bb P^{\wedge m} \to X_+\wedge T^m$ and
$s: X_+\wedge \bb P^{\wedge n} \to \mathcal F\wedge T^n$
are morphisms of pointed Nisnevich sheaves, then we define $\alpha^*(s)$ as the composite morphism
$$U_+\wedge\bb P^{\wedge m}\wedge \bb P^{\wedge n} \xrightarrow{\alpha\wedge\id} X_+\wedge T^m\wedge \bb P^{\wedge n}\cong
T^m\wedge X_+\wedge \bb P^{\wedge n} \xrightarrow{\id\wedge s} T^m\wedge \mathcal F\wedge T^n\cong\mathcal F\wedge T^m\wedge T^n.$$
By Corollary \ref{Fr_+_and_Fr_hom_+} the categories
$Fr_+(k)$ and $\cc Fr_+(k)$ are isomorphic.
The category isomorphism makes $\cc Fr_+(\mathcal F)$ a framed presheaf.
In the special case when $\mathcal F=Y/(Y-S)$ the bijections $a_{n,X,Y/(Y-S)}$
induce {\it isomorphisms of framed presheaves\/}
$a_{Y/(Y-S)}:Fr_+(-,Y/(Y-S))\to \cc Fr_+(-,Y/(Y-S))$, where $Fr_+(-,Y/(Y-S)):=\bigvee_{n\geq 0}Fr_n(-,Y/(Y-S))$.
}
\end{rem}

\begin{defs}\label{def: cc_Fr_F}{\rm
For a pointed Nisnevich sheaf $\cc F$ set
   $$\cc Fr(-,\cc F)=\colim(\mathcal F\lra{\sigma} \underline{\Hom}(\bb P^{\wedge 1},\mathcal F\wedge T)\lra{\sigma}
   \underline{\Hom}(\bb P^{\wedge 2},\mathcal F\wedge T^2)\lra{\sigma}\cdots),$$
where
${\sigma}(\Phi:U_+\wedge\bb P^{\wedge n}\to\cc F\wedge T^n)=
(U_+\wedge\bb P^{\wedge n+1}\xrightarrow{\Phi\wedge 1_{\bb P^{\wedge 1}}}\cc F\wedge T^{n}\wedge\bb P^{\wedge 1}
\xrightarrow{1\wedge\sigma}\cc F\wedge T^{n+1})$.
Observe that $\cc Fr(-,\cc F)$ is a framed Nisnevich sheaf.
}
\end{defs}

In the special case $\mathcal F=Y/(Y-S)$ one has that
\begin{equation}\label{a_Fr_to_Fr_hom}
a_{Y/(Y-S)}=\colim_n {a_{n,Y/(Y-S)}}: Fr(-,Y/(Y-S))\to \cc Fr(-,Y/(Y-S)).
\end{equation}
is {\it an isomorphism of framed sheaves}.

In what follows we shall identify the isomorphic framed sheaves $Fr(-,Y/(Y-S))$ and $\cc Fr(-,Y/(Y-S))$.
If we write $Fr(-,Y/(Y-S))$ then we use the geometric description of the sheaf. In turn,
the use of $\cc Fr(-,Y/(Y-S))$ will mostly refer to the equivalent categorical description.
The reader should always keep in mind the equivalent desriptions of both framed sheaves thanks to Voevodsky's Lemma.

\section{Motivic version of Segal's theorem}\label{segal}

After collecting necessary facts about framed correspondences in
previous sections, we can formulate the main computational result of
the paper. It is reminiscent of Segal's theorem computing the
suspension spectrum $\Sigma^\infty_{S^1}X$ of a topological space
$X$ as the Segal spectrum of an associated $\Gamma$-space
$B\Sigma_X$ (see~\cite[Section~3]{S} for more details). The motivic
counterpart of the Segal theorem computes the suspension $\bb
P^1$-spectrum $\Sigma^\infty_{\bb P^1}X_+$ of a smooth algebraic
variety $X$ in terms of associated motivic spaces with framed
correspondences. In a certain sense the theory of framed
correspondences gives rise to an infinite $\bb P^1$-loop space
machine. In order to formulate the theorem, we need some
preparations.

In Section~\ref{ohoho} we introduced Nisnevich sheaves
$Fr_s(-,Y/Y-S)$ and $Fr(-,Y/Y-S)$. In the special case when
$Y=X\times\bb A^n$ and $S=X\times 0$ we shall write $Fr_s(-,X\times
T^n)$ and $Fr(-,X\times T^n)$ to denote the Nisnevich sheaves
$Fr_s(-,X\times\bb A^n/(X\times\bb A^n-X\times 0))$ and
$Fr(-,X\times\bb A^n/(X\times\bb A^n-X\times 0))$ respectively.
Recall that an element of $Fr_s(-,X\times T^n)$ can be written as a
tuple $(Z,W,(\phi_1,...,\phi_s;\psi_1,...,\psi_n):W\to\bb
A^{s+n};g:W\to X)$ such that the support $Z=Z(\phi_1,...,\phi_s)\cap
Z(\psi_1,...,\psi_n)= Z(\phi_1,...,\phi_s,\psi_1,...,\psi_n)$.

For $X\in Sm/k$ and integers $s,n\geq 0$ consider a morphism of pointed Nisnevich sheaves
   \begin{equation}\label{sigmasn}
    \sigma_{s,n}: Fr_{s}(-,X\times T^n)\to \underline{\Hom}({\bb P^{\wedge 1}},Fr_{s}(-,X\times T^{n+1})).
   \end{equation}
It takes $(Z,W,\phi_1,...,\phi_s;\psi_1,...,\psi_n;g)$ to $(Z\times 0,W\times\bb A^1,\phi_1,...,\phi_{s};\psi_1,...,\psi_n,pr_{\bb A^1};g)$.
On the other hand, we have canonical maps~\eqref{eq:Sigma}
   \begin{equation}\label{sigmasntn}
    \sigma_{s,X\times T^n}: Fr_{s}(-,X\times T^n)\to Fr_{s+1}(-,X\times T^{n})
   \end{equation}
taking $(Z,W,\phi_1,...,\phi_s;\psi_1,...,\psi_n;g)$ to $(Z\times 0,W\times\bb A^1,\phi_1,...,\phi_{s},pr_{\bb A^1};\psi_1,...,\psi_n;g)$.

We have a commutative diagram
   $$\xymatrix{Fr_{s}(-,X\times T^n)\ar[r]^(.4){\sigma_{s,n}}\ar[d]_{\sigma_{s,X\times T^n}}
                       &\underline{\Hom}({\bb P^{\wedge 1}},Fr_{s}(-,X\times T^{n+1}))\ar[d]^{(\sigma_{s,X\times T^{n+1}})_*}\\
                       Fr_{s+1}(-,X\times T^n)\ar[r]^(.4){\sigma_{s+1,n}}&\underline{\Hom}({\bb P^{\wedge 1}},Fr_{s+1}(-,X\times T^{n+1})).}$$
Passing to colimits in the $s$-direction, we get a morphism of pointed sheaves
   $$\sigma_{n}: Fr(-,X\times T^n)\to \underline{\Hom}({\bb P^{\wedge 1}},Fr(-,X\times T^{n+1})).$$
We can form a $\bb P^{1}$-spectrum
   \begin{equation}\label{frp1t}
   Fr_{\bb P^{\wedge 1},T}(X)=(Fr(-,X),Fr(-,X\times T),Fr(-,X\times T^2), ... )
   \end{equation}
with structure morphisms given by $\sigma_n$-s.

We can now take the Suslin complex of each motivic space of the spectrum $Fr_{\bb P^{\wedge 1},T}(X)$
to form a $\bb P^{1}$-spectrum
   $$M_{\bb P^{\wedge 1}}(X)=(C_*Fr(-,X),C_*Fr(-,X\times T),C_*Fr(-,X\times T^2), ... )$$
with structure maps defined by $C_*(\sigma_n)$-s.

There is a canonical morphism of $\bb P^1$-spectra
   $$\kappa: \Sigma^{\infty}_{\bb P^{1}}X_+\to M_{\bb P^{\wedge 1}}(X)$$
given by the section $\id_X\in Fr_0(X,X)$ (recall that a morphism
from the suspension spectrum of a variety $X\in Sm/k$ to any other
spectrum is fully determined by a section of the zeroth motivic
space of the spectrum at $X$).

By~\cite[2.7]{Jar1} the category of simplicial Nisnevich sheaves on
$Sm/k$ has the injective local model structure with cofibrations
monomorphisms and local weak equivalences. Take a fibrant
replacement $C_*Fr(-,X\times T^n)_f$ of every motivic space
$C_*Fr(-,X\times T^n)$ within the injective local model structure.
We then arrive at a $\bb P^1$-spectrum
   $$M_{\bb P^{\wedge 1}}(X)_f=(C_*Fr(-,X)_f,C_*Fr(-,X\times T)_f,C_*Fr(-,X\times T^2)_f, ... ).$$
Notice that $M_{\bb P^{\wedge 1}}(X)_f$ is a fibrant replacement of
the $\bb P^1$-spectrum $M_{\bb P^{\wedge 1}}(X)$ within the level
injective local model structure of $\bb P^1$-spectra.

If the characteristic of the base field is different from 2, then we shall prove (see Theorem~\ref{Segal_Thm_II})
that $M_{\bb P^{\wedge 1}}(X)_f$ is a positively fibrant $\bb P^1$-spectrum. In turn, if $\charr k=2$ then we are able to prove a
similar result only ``after inverting 2". Precisely, there are explicitely constructed motivic spaces
$C_*Fr(-,X\times T^n)\{1/2\}$, $n>0$, such that the canonical morphism of spaces
   $$\tau:C_*Fr(-,X\times T^n)\to C_*Fr(-,X\times T^n)\{1/2\}$$
induces an isomorphism of sheaves $\pi_*^{\nis}(\tau)\otimes\bb Z[1/2]$ (see Proposition~\ref{frac12}).

We can form a $\bb P^{1}$-spectrum
   $$M_{\bb P^{\wedge 1}}(X)\{1/2\}:=(C_*Fr(-,X),C_*Fr(-,X\times T)\{1/2\},C_*Fr(-,X\times T^2)\{1/2\}, ... )$$
with structure maps defined as for $M_{\bb P^{\wedge 1}}(X)$. The
reader may have observed that we do not modify the zeroth space
$C_*Fr(-,X)$. The reason is that we only deal with spaces in
positive degrees in the proof of Theorem~\ref{Segal_Thm_II}. Take a
fibrant replacement $C_*Fr(-,X\times T^n)\{1/2\}_f$ of every motivic
space $C_*Fr(-,X\times T^n)\{1/2\}$ within the injective local model
structure. We then arrive at a $\bb P^1$-spectrum
   $$M_{\bb P^{\wedge 1}}(X)\{1/2\}_f=(C_*Fr(-,X)_f,C_*Fr(-,X\times T)\{1/2\}_f,C_*Fr(-,X\times T^2)\{1/2\}_f, ... ).$$
Notice that $M_{\bb P^{\wedge 1}}(X)\{1/2\}_f$ is a fibrant replacement of the $\bb P^1$-spectrum
$M_{\bb P^{\wedge 1}}(X)\{1/2\}$ within the level injective local model structure of $\bb P^1$-spectra.

Let
   $$\kappa_f:\Sigma^{\infty}_{\bb P^1}X_+\to M_{\bb P^{\wedge 1}}(X)\to M_{\bb P^{\wedge 1}}(X)_f$$
denote the composite morphism. We also have a morphism of spectra
   $$\tau_f\circ\kappa_f:\Sigma^{\infty}_{\bb P^1}X_+\to M_{\bb P^{\wedge 1}}(X)_f\to M_{\bb P^{\wedge 1}}(X)\{1/2\}_f.$$
A morphism of $\bb P^1$-spectra is called a {\it $2^{-1}$-stable motivic equivalence\/} if it becomes an isomorphism
in the category $SH(k)[1/2]$.

A motivic counterpart of the Segal theorem says:

\begin{thm}\label{Segal_Thm_II}
Let $k$ be an infinite perfect field. Then the following statements are true:
\begin{enumerate}
\item The morphism $\kappa_f: \Sigma^{\infty}_{\bb P^1}X_+\to M_{\bb P^{\wedge 1}}(X)_f$
is a stable motivic equivalence of $\bb P^1$-spectra and the morphism
$\tau_f\circ\kappa_f:\Sigma^{\infty}_{\bb P^1}X_+\to M_{\bb P^{\wedge 1}}(X)\{1/2\}_f$
is a $2^{-1}$-stable motivic equivalence of $\bb P^1$-spectra.
\item If $\charr k\ne 2$ (respectively $k$ is of any characteristic) then the
$\bb P^1$-spectrum $M_{\bb P^{\wedge 1}}(X)_f$ (respectively $M_{\bb
P^{\wedge 1}}(X)\{1/2\}_f$) is a motivically fibrant
$\Omega$-spectrum in posisitive degrees. This means that for every
positive integer $n>0$ each motivic space $C_*(Fr(-,X\times
T^{n}))_f$ (respectively $C_*(Fr(-,X\times T^{n}))\{1/2\}_f$) is
motivically fibrant in the Morel--Voevod\-sky~\cite{MV} motivic
model category of simplicial Nisnevich sheaves and the structure map
   $$C_*(Fr(-,X\times T^{n}))_f\to \Omega_{\bb P^1}(C_*(Fr(-,X\times T^{n+1}))_f)$$
(respectively  $C_*(Fr(-,X\times T^{n}))\{1/2\}_f\to \Omega_{\bb P^1}(C_*(Fr(-,X\times T^{n+1}))\{1/2\}_f)$)
is a  weak equivalence scheme\-wise.
\end{enumerate}
\end{thm}

We shall also extend Theorem~\ref{Segal_Thm_II} to directed colimits of
simplicial schemes (see Theorem~\ref{Segal_Thm_Simpl}).
The next two corollaries are immediate consequences of the preceding theorem.

\begin{cor}\label{segalsledstvie1}
Let $k$ be an infinite perfect field of characteristic different from 2.
Then for any positive integer $m>0$ the natural morphism of $\bb P^1$-spectra\footnotesize
   $$\kappa_{f}: \Sigma^{\infty}_{\bb P^1}(X_+\wedge\bb P^{\wedge m})\to M_{\bb P^{\wedge 1}}(X\times T^m)_f:=
       (C_*Fr(-,X\times T^m)_f,C_*Fr(-,X\times T^{m+1})_f,C_*Fr(-,X\times T^{m+2})_f, ... )$$
\normalsize is a fibrant replacement of the suspension $\bb P^1$-spectrum $\Sigma_{\bb P^1}^\infty(X_+\wedge\bb P^{\wedge m})$
in the stable motivic model structure of $\bb P^1$-spectra in the sense of Jardine~\cite{Jar}.
\end{cor}

Given a $\bb P^1$-spectrum $E$, let $\cc E$ be an $\Omega$-spectrum
stably equivalent to $E$. By $\Omega^{\infty}_{\bb P^1}(E)$ we mean
the zeroth motivic space $\cc E_0$. If $E=\Sigma_{\bb P^1}^\infty\cc
X$ is the suspension $\bb P^1$-spectrum of a pointed motivic space
$\cc X$, we shall write $\Omega^{\infty}_{\bb
P^1}\Sigma^{\infty}_{\bb P^1}(\cc X)$ to denote
$\Omega^{\infty}_{\bb P^1}(E)$.\label{ominfty}

\begin{cor}\label{segalsledstvie2}
Let $k$ be an infinite perfect field of characteristic different from 2.
Then for any positive integer $m>0$ the natural morphism of motivic spaces
   $$C_*(Fr(X\times T^m))\to\Omega^{\infty}_{\bb P^1}\Sigma^{\infty}_{\bb P^1}(X_+\wedge\bb P^{\wedge m})$$
is a stalkwise weak equivalence for the Nisnevich topology.
Particularly, for any field extension $K/k$ the natural morphism
of simplicial sets
$$Fr(\Delta^{\bullet}_K,X\times T^m)\to\Omega^{\infty}_{\bb P^1}\Sigma^{\infty}_{\bb P^1}(X_+\wedge\bb P^{\wedge m})(K)$$
is a weak equivalence.
\end{cor}

The proof of Theorem~\ref{Segal_Thm_II} is lengthy and is postponed.
Although it states something for motivic spaces, the main strategy
to prove it is to use the machinery of framed motives introduced and
studied in this paper. By definition, the framed motive of a variety
or a sheaf is a $S^1$-spectrum of simplicial Nisnevich sheaves, and
hence may have nothing to do with Theorem~\ref{Segal_Thm_II} at the
first glance. {\it But this is not the case! It is the theory of
framed motives that allows to prove Theorem~\ref{Segal_Thm_II}.}

The proof also depends on a theorem of~\cite{GP4} about homotopy invariant presheaves with framed correspondences and
further two papers~\cite{AGP,GPN}, in which the Cancellation Theorem for framed motives of algebraic varieties is proved and
framed motives of relative motivic spheres are computed.

\section{Framed motives}

As we have mentioned above, framed motives give the main technical
tool to prove Theorem~\ref{Segal_Thm_II}. Before introducing them,
we fix the following useful

\begin{framework}\label{genfr}{\rm
Let $(\cc V,\otimes)$ be a closed symmetric monoidal category and
$\cc C$ is a bicomplete category which is tensored and cotensored
over $\cc V$. Then for every $V\in\cc V$ and $X\in\cc C$ there are
defined objects $V\otimes X$, $X\otimes V$, $\underline{\Hom}(V,X)$
of $\cc C$. They are all functorial in $V$ and $X$. Moreover, for
every morphism $u:V\to V'$ in $\cc V$ the square
   \begin{equation}\label{actionV}
    \xymatrix{X\ar[rr]^{-\otimes V}\ar[d]_{-\otimes V'}&&\underline{\Hom}(V,X\otimes V)\ar[d]^{u_*}\\
    \underline{\Hom}(V',X\otimes V')\ar[rr]_{u^*}&&\underline{\Hom}(V,X\otimes V')}
   \end{equation}
is commutative.

As an important example, $\cc V$ is the category
$(sShv_\bullet(Sm/k),\wedge)$ of Nisnevich sheaves of pointed
simplicial sets and $\cc C$ is either $sShv_\bullet(Sm/k)$ or the
category of $S^1$-spectra of simplicial Nisnevich sheaves.

Another example is the category $(Fr_0(k),\times, pt)$ and the category $Fr_+(k)$
The functor $$Fr_+(k)\times Fr_0(k) \to Fr_+(k)$$
takes $(X,Y)$ to $X\times Y$.
}\end{framework}

Let $\Gamma^{\op}$ be the category of finite pointed sets and
pointed maps. Its skeleton has objects $n^+=\{0,1,\ldots,n\}$. We
shall also regard each finite pointed set as a pointed smooth
scheme. For example, we identify $n^+$ with the pointed scheme
$(\sqcup_{1}^n\spec k)_+$ with the distinguished point +
corresponding to $0\in n^+$. Note that $0^+=\emptyset_+$. A {\it
$\Gamma$-space\/} is a covariant functor from $\Gamma^{\op}$ to the
category of simplicial sets taking $0^+$ to a one point simplicial
set. A morphism of $\Gamma$-spaces is a natural transformation of
functors. A $\Gamma$-space $X$ is called {\it special\/} if the map
$X((k + l)^+):X(k^+)\times X(l^+)$ induced by the projections from
$(k+l)^+\cong k^+\vee l^+$ to $k^+$ and $l^+$ is a weak equivalence
for all $k$ and $l$. $X$ is called {\it very special\/} if it is
special and the monoid $\pi_0(X(1^+))$ is a group.

{\it In what follows we shall regard $\Gamma^{\op}$ as a full
subcategory in $sShv_\bullet(Sm/k)$ by means of the identification
$K\in\Gamma^{\op}$ with the pointed scheme $(\spec
k\sqcup\ldots\sqcup\spec k)_+$, where the coproduct is indexed by
the non-based elements in $K$}.

By the General Framework above, for every $\cc F,\cc G\in
sShv_\bullet(Sm/k)$ the association
   $$K\in\Gamma^{\op}\mapsto\Hom_{sShv_\bullet(Sm/k)}(\cc F,\cc G\wedge K)$$
gives rise to a $\Gamma$-space, where the right hand side is
regarded as a discrete simplicial set. In particular, if $\cc
F=X_+\wedge\bb P^{\wedge n}$ and $\cc G=\cc H\wedge T^n$ with $X\in
Sm/k$, $\cc H\in sShv_\bullet(Sm/k)$, we have that the association
   $$K\in\Gamma^{\op}\mapsto\cc Fr_n(X,\cc H\wedge K):=
     \Hom_{sShv_\bullet(Sm/k)}(X_+\wedge\bb P^{\wedge n},\cc H\wedge T^n\wedge K)$$
is a $\Gamma$-space. Taking the colimit over $n$, we get that
   $$K\in\Gamma^{\op}\mapsto\cc Fr(X,\cc H\wedge K)=\colim_n(\cc Fr_n(X,\cc H\wedge K))$$
is a $\Gamma$-space as well.

Using the geometric description of framed correspondences for $\cc
H=Y_+$, $Y\in Sm/k$, the $\Gamma$-spaces $K\in\Gamma^{\op}\mapsto\cc
Fr_n(X,\cc H\wedge K)$ and $K\in\Gamma^{\op}\mapsto\cc Fr(X,\cc
H\wedge K)$ can equivalently be defined as
   $$K\in\Gamma^{\op}\mapsto Fr_n(X,Y\otimes K)\quad\textrm{and}\quad Fr(X,Y\otimes K)$$
respectively. Here $Y\otimes K:=Y\sqcup\ldots\sqcup Y$ with the
coproduct indexed by the non-based elements in $K$. Observe that
$\emptyset\otimes K=\emptyset$ and $X\otimes *=\emptyset$. These
$\Gamma$-spaces are functorial in $X$ and $Y$ in framed
correspondences of level zero. The second $\Gamma$-space is
furthermore a framed functor in $X$.

\begin{defs}\label{def:SmOpFr_0}{\rm
We define a category $SmOp(Fr_0(k))$, which will be often used in
our constructions. Its objects are pairs $(X,U)$, where $X\in Sm/k$
and $U\subset X$ is its open subset. A morphism between $(X,U)$ and
$(X',U')$ in $SmOp(Fr_0(k))$ is a morphism $f\in Fr_0(X,X')$ such
that $f(U)\subset U'$. We shall also identify $X\in Sm/k$ with the
pair $(X,\emptyset)\in SmOp(Fr_0(k))$.

}\end{defs}

The category $SmOp(Fr_0(k))$ is symmetric monoidal with the monoidal
product $\wedge$ given by
   $$(X,U)\wedge (Y,V):=(X\times Y,X\times V\cup U\times Y).$$
The point $pt$ is its monoidal unit. Also, by $(X,U)\sqcup (Y,V)$ we shall mean $(X \sqcup Y, U \sqcup V)$.

Let $\Delta^{\op} SmOp(\Fr_0(k))$ be the category of simplicial
objects in $SmOp(Fr_0(k))$. There is an obvious functor $spc:
SmOp(Fr_0(k)) \to Shv_\bullet(Sm/k)$ sending an object $(X,U)\in
SmOp(Fr_0(k))$ to the Nisnevich sheaf $X/U$. Observe that this
functor is a strict symmetric monoidal functor. It induces a functor
   $$spc:\Delta^{\op} SmOp(Fr_0(k))\to sShv_\bullet(Sm/k),$$
taking an object $[n]\mapsto (Y_n,U_n)$ to the simplicial Nisnevich
sheaf $[n]\mapsto (Y_n/U_n)$.

Given $Y\in SmOp(Fr_0(k))$ there is a $\Gamma$-space
   $$K\in\Gamma^{\op}\mapsto Fr(X,Y\otimes K):=\cc Fr(X,spc(Y)\otimes K).$$
Notice that the right hand side has an explicit geometric
description thanks to Voevodsky's lemma.

\begin{defs}\label{frmotive}{\rm
(1) The {\it framed motive $\cc M_{fr}(\cc G)$ of a pointed
Nisnevich simplicial sheaf $\cc G$\/} is the Segal $S^1$-spectrum
$(C_*\cc Fr(-,\cc G),C_*\cc Fr(-,\cc G\wedge S^1),C_*\cc Fr(-,\cc
G\wedge S^2),\ldots)$ associated with the
$\Gamma$-space $K\in\Gamma^{\op}\mapsto C_*\cc Fr(-,\cc G\wedge
K)=\cc Fr(\Delta^\bullet_+\wedge-,\cc G\wedge K)$. More precisely,
each structure map
   $$C_*\cc Fr(-,\cc G\wedge S^m)\wedge S^1\to C_*\cc Fr(-,\cc G\wedge S^{m+1})$$
is given as follows. For any $r$ and $m$, it coincides termwise with
the natural morphisms
   $$\bigvee\cc Fr(\Delta^r_+\wedge-,\cc G\wedge S^m)\to\cc Fr(\Delta^r_+\wedge-,\bigvee(\cc G\wedge S^m)),$$
where coproducts are indexed by non-basepoint elements of
$S^1_n=n^+$.

(2) The {\it framed motive $M_{fr}(Y)$ of $Y\in\Delta^{\op}
SmOp(Fr_0(k))$\/} is the framed motive $\cc M_{fr}(spc(Y))$. It is
the Segal $S^1$-spectrum $(C_*Fr(-,Y),C_*Fr(-,Y\otimes
S^1),C_*Fr(-,Y\otimes S^2),\ldots)$ associated
with the $\Gamma$-space $K\in\Gamma^{\op}\mapsto C_*Fr(-,Y\otimes
K)=Fr(\Delta^\bullet\times-,Y\otimes K)$.

(3) In particular, by the {\it framed motive $M_{fr}(Y)$ of a smooth
algebraic variety $Y\in Sm/k$\/} we mean the framed motive of
$(Y,\emptyset)\in SmOp(Fr_0(k))$.
}
\end{defs}

\begin{rem}\label{M_and_cal_M}{\rm
(1) We should point out that it is framed motives $M_{fr}(Y^\bullet)$ of simplicial $k$-varieties
which are used to construct the functor $M_{fr}: H_{\mathbb A^1}(k)\to SH_{S^1}(k)$ in Section~\ref{furtherappl}.

(2) Framed motives of pointed Nisnevich simplicial sheaves are not suitable for constructing such a functor.
Particularly, we do not expect that the functor $\cc G \mapsto \cc M_{fr}(\cc G)$ from Definition~\ref{frmotive}(1)
preserves motivic equivalences. Therefore we do not expect that for a general motivic space $\cc G$ the value
of the functor $M_{fr}$ on $\cc G$ constructed in Section~\ref{furtherappl} has
stable motivic homotopy type of $\cc M_{fr}(\cc G)$ from Definition~\ref{frmotive}(1).

(3) It is for this reason that the main result of~\cite{GPN} stating that the natural motivic equivalence
$X\times (\A^1// \mathbb G_m)^{\wedge n}\to X\times T^n$
of motivic spaces induces for any $n\geq 1$ a motivic equivalence (and even a level Nisnevich local weak equivalence)
$M_{fr}(X\times (\A^1// \mathbb G_m)^{\wedge n}) \simeq \mathcal M_{fr}(X_+\wedge T^n)$
of $S^1$-spectra is not obvious at all. In other words, the framed motive of the sheaf $X_+\wedge T^n$
is computed as the framed motive of the associated simplicial scheme. This result is
necessary to prove Theorem~\ref{Segal_Thm_II}. It is also of independent interest.

(4) More generally, we can raise a problem asking for which motivic spaces $\cc G$ the framed motive
$\cc M_{fr}(\cc G)$ from Definition~\ref{frmotive}(1) is isomorphic in $SH_{S^1}(k)$ to
its image under the functor $M_{fr}: H_{\mathbb A^1}(k)\to SH_{S^1}(k)$ constructed in Section~\ref{furtherappl}.

(5) However, the functor $M_{fr}: \Delta^{\op}(Fr_0(k)) \to Sp_{S^1}(k)$ does preserve motivic equivalences
(see Corollary~\ref{Motivic_Segal_Thm_Cor}).

(6) Framed motives of the form $M_{fr}(Y)$ with $Y\in\Delta^{\op} SmOp(Fr_0(k))$
are of great utility to prove Theorem~\ref{Segal_Thm_II}.

}\end{rem}

The framed motive $M_{fr}(Y)$ or $\cc M_{fr}(\cc G)$ is a symmetric
semistable $S^1$-spectrum, because it is the value of the $\Gamma$-space $C_*Fr(-,Y)$
or $C_*\cc Fr(-,\cc G)$ at the sphere spectrum $\bb S=(S^0,S^1,S^2,\ldots)$. Since we also
deal with framed spectra/spaces whose presheaves of homotopy groups are pre\-sheaves
of $\bb Z[1/2]$-modules, we have to introduce ``framed motives with $1/2$-coefficients".
For this consider $2\in\pi_1(S^1)=[S^1,Ex^\infty S^1]=\bb Z$ (here square brackets refer to
the maps up to simplicial homotopy). There is the smallest integer $k>0$ such that any
representative of $2\in\pi_1(S^1)$ factors as
   $$S^1\to \sd^k S^1\to Ex^\infty S^1.$$
Here $\sd^k S^1$ stands for the $k$th subdivision of $S^1$. If
there is no likelihood of confusion, we denote the left arrow by $2:S^1\to \sd^k S^1$ and fix a representative
in its homotopy class from now on.
We also denote the finite pointed simplicial set $\sd^k S^1$ by $\cc S^1$. We also set
$\cc S^n:=\cc S^1\wedge\bl n\cdots\wedge\cc S^1$.

Let $\bb S[1/2]$ be the spectrum $(S^0,\cc S^1,\cc S^2,\ldots)$. Its structure maps are defined as
$\cc S^n\wedge S^1\xrightarrow{\id\wedge 2}\cc S^n\wedge\cc S^1=\cc S^{n+1}$.

\begin{defs}\label{frmotive1/2}{\rm
(1) The {\it framed motive with $1/2$-coefficients $\cc M_{fr}(\cc G)\{1/2\}$ of a pointed
Nisnevich simplicial sheaf $\cc G$\/} is the value of the $\Gamma$-space $C_*\cc Fr(-,\cc G)$
at the spectrum $\bb S[1/2]$. Explicitly, it is the symmetric $S^1$-spectrum
$(C_*\cc Fr(-,\cc G),C_*\cc Fr(-,\cc G\wedge\cc S^1),C_*\cc Fr(-,\cc
G\wedge\cc S^2),\ldots)$ with each structure map given by the composition
   $$C_*\cc Fr(-,\cc G\wedge\cc S^m)\wedge S^1\to C_*\cc Fr(-,\cc G\wedge\cc S^{m}\wedge S^1)
       \xrightarrow{2_*} C_*\cc Fr(-,\cc G\wedge\cc S^{m+1}).$$

(2) The {\it framed motive with $1/2$-coefficients $M_{fr}(Y)\{1/2\}$ of $Y\in\Delta^{\op}
SmOp(Fr_0(k))$\/} is the framed motive $\cc M_{fr}(spc(Y))\{1/2\}$. It is
the symmetric $S^1$-spectrum $(C_*Fr(-,Y),C_*Fr(-,Y\otimes
\cc S^1),C_*Fr(-,Y\otimes\cc S^2),\ldots)$ with structure maps defined as above.

(3) In particular, by the {\it framed motive $M_{fr}(Y)\{1/2\}$ of a smooth
algebraic variety $Y\in Sm/k$\/} we mean the framed motive with $1/2$-coefficients of
$(Y,\emptyset)\in SmOp(Fr_0(k))$.

}\end{defs}

Our next goal is to show that the framed motive $M_{fr}(Y)$ of
$Y\in\Delta^{\op} SmOp(Fr_0(k))$ is a positively fibrant
$\Omega$-spectrum. To this end we need to prove the ``Additivity
Theorem".

\section{Additivity Theorem}\label{additivity_section}

In this section we prove the Additivity Theorem. It is reminiscent
of the Additivity Theorem in algebraic $K$-theory. We shall use it
to produce special $\Gamma$-spaces in the sense of Segal~\cite{S}
for associated motivic spaces with framed correspondences. In
particular, Segal's machine then implies that the framed motive of a
variety or, more generally, $Y\in\Delta^{\op}SmOp(Fr_0(k))$ is a
positively fibrant $S^1$-spectrum.

Following \cite{Voe2} for any $X\in Sm/k$ denote by $m$ the explicit correspondence from $X$ to
$X\sqcup X$ with $U=(\bb A^1 -\{0\}\sqcup\bb A^1 -\{1\})_X$, $\phi=
(t-1)\sqcup t$ where $t:\bb A^1_X\to X$ is the projection and
$g:(\bb A^1-\{0\}\sqcup \bb A^1-\{1\})_X\to X\sqcup X$. For any
framed functor $\cc F$ it defines a map
   $$\cc F(X)\times\cc F(X)= \cc F(X\sqcup X)\xrightarrow{m^*} \cc F(X).$$

\begin{defs}{\rm
Let $\mathcal{F}$ and $\mathcal{G}$ be two presheaves of sets on the
category of $k$-smooth schemes and let $\phi_0,\phi_1: \mathcal{F}
\rightrightarrows \mathcal{G}$ be two morphisms. An {\it
$\A^1$-homotopy\/} between $\phi_0$ and $\phi_1$ is a morphism $H:
\mathcal{F}\to \text{\underline {Hom}}(\A^1,\mathcal{G})$ such that
$H_0=\phi_0$ and $H_1=\phi_1$. We write $\phi_0\sim \phi_1$ if
there is an $\A^1$-homotopy between $\phi_0$ and $\phi_1$.
We say that $\phi_0,\phi_n: \cc F \rightrightarrows \mathcal{G}$
are {\it $\bb A^1$-homotopic}, if there is a chain of morphisms
$\phi_1,...,\phi_n$ such that $\phi_i\sim \phi_{i+1}$ for $i=0,1,..., n-1$.
}
\end{defs}

We now want to discuss matrices actions on framed correspondences
and $\bb A^1$-homotopies associated to them. Let $Y$ be a $k$-smooth scheme and let $A\in GL_n(k)$ be a
matrix. Then $A$ defines an automorphism $\phi_{A\bot Id_m}: Fr_{n+m}(-,Y) \to Fr_{n+m}(-,Y)$
of the presheaf $Fr_{n+m}(-,Y)$ in the following way. Given $W\in Sm/k$ and
an {\it explicit framed
correspondence  of level n}
$$\Phi=(\mathbb A^{n+m}_X \xleftarrow{p} U,\phi: U \to \mathbb A^{n+m}_k,g: U\to Y) \in Fr_{n+m}(X,Y),$$
set
$\phi_{A\bot Id_m}(\Phi)=((A\bot Id_m)\circ p,(A\bot Id_m)\circ \phi, g)$.
Within the notation of Remark~\ref{short_Notation}
   $$\phi_{A\bot Id_m}(Z,U,(\varphi_1,\varphi_2,\dots,\varphi_{n+m}),g))
       := ((A\bot Id_m)(Z),U,(A\bot Id_m)\circ(\varphi_1,\varphi_2,\dots,\varphi_{n+m}),g),$$
where $A\bot Id_m$ is a linear automorphism of $\A^{n+m}_k$.
In more details, if
$(U,\mathbb A^{n+m}_X \xleftarrow{p} U, s: Z\to U)$
is an \'{e}tale neighbourhood of $Z$ in $\mathbb A^{n+m}_X$, then we take
$$(U,\mathbb A^{n+m}_X \xleftarrow{(A\bot Id_m)\circ p} U, s\circ ((A\bot Id_m)^{-1}|_{(A\bot Id_m)(Z)}):(A\bot Id_m)(Z)\to U)$$
as an \'{e}tale neighbourhood of $(A\bot Id_m)(Z)$ in $\mathbb A^{n+m}_X$.
Clearly, $\sigma\circ \phi_{A\bot Id_m}=\phi_{A\bot Id_{m+1}}\circ \sigma$.
Hence the maps $\phi_{A\bot Id_m}$ give rise to a unique automorphism of {\it presheaves on $Sm/k$}
   \begin{equation}\label{phi_A}
    \phi_A: Fr(-,Y) \to Fr(-,Y)
   \end{equation}
such that for any $m\geq 0$ one has $\phi_A|_{Fr_{n+m}(-,Y)}=\phi_{A\bot Id_m}$.

\begin{defs} \label{d:homotopy_1}{\rm
Let $A\in SL_n(k)$. Choose a matrix $A_s\in SL_n(k[s])$ such that
$A_0=id$ and $A_1=A$. The matrices $A_s\bot Id_m\in SL_{n+m}(k[s])$, regarded as morphisms
$\A^{n+m} \times \A^1 \to \A^{n+m}$, give rise to an $\A^1$-homotopy $h$ between the automorphisms
$id$ and $\phi_A$ of $Fr(-,Y)$ as follows. Given
$a=(Z,U,(\varphi_1,\varphi_2,...,\varphi_{n+m}),g)
\in Fr_n(-,Y)$, one sets
   $$h(a)=(Z\times \A^1,U\times \A^1, (A_s\bot Id_m)\circ (\varphi\times id_{\A^1}),g\circ pr_U) \in Fr_n(W\times \A^1,Y).$$
In this way we get a morphism $h: Fr(-,Y)\to Fr(-\times \bb A^1,Y)$ such that
$h_0=id$ and $h_1=\phi_A$. We see that $h$ is an $\bb A^1$-homotopy between the identity and $\phi_A$.
}
\end{defs}

\begin{defs}\label{d:homotopy_2}{\rm
Let $\tau \in \Sigma_n$ be an even permutation regarded as a matrix in $SL_n(k)$.
Let $A_s\in SL_n(k[s])$ be such that
$A_0=id$ and $A_1=\tau$. Then the morphism $h$ from Definition~\ref{d:homotopy_1}
defines an $\bb A^1$-homotopy between the automorphisms $\phi_{id}$ and $\phi_{\tau}$ of $Fr(-,Y)$.
}
\end{defs}

We are now in a position to prove the Additivity Theorem. The category
$SmOp(Fr_0(k))$ was introduced in Definition~\ref{def:SmOpFr_0}. Given a simplicial object $Y$ of $SmOp(Fr_0(k))$,
by $C_*Fr(-,Y)$ we mean as usual the diagonal of the bisimplicial presheaf $(m,n)\mapsto Fr(\Delta^m\times-,Y_n)$.

\begin{thm}[Additivity]\label{additivity}
For any two simplicial objects $Y_1, Y_2$ in $SmOp(Fr_0(k))$, the natural map
$\alpha: Fr(-,Y_1\sqcup Y_2)\to Fr(-,Y_1)\times Fr(-,Y_2)$, given by the morphisms
$\id\sqcup\emptyset:Y_1\sqcup Y_2\to Y_1,\emptyset\sqcup\id:Y_1\sqcup Y_2\to Y_2$, induces a map
of simplicial framed presheaves
   $$C_*(\alpha):C_*Fr(-,Y_1\sqcup Y_2)\to C_*Fr(-,Y_1)\times C_*Fr(-,Y_2),$$
which is a schemewise weak equivalence.
\end{thm}

\begin{proof}
Since the realization functor takes simplicial weak equivalences of simplicial sets to weak equivalences,
it is enough to prove the theorem for any objects $Y_1,Y_2$ in $SmOp(Fr_0(k))$.
Moreover, it is sufficient to prove that for any $X\in Sm/k$ and any finite unpointed simplicial set $K$, the map
    $$C_*(\alpha)(X,K): [K,C_*Fr(X,Y_1\sqcup Y_2)]\to [K,C_*Fr(X,Y_1)\times C_*Fr(X,Y_2)]$$
between the Hom-sets in the homotopy category $\Ho(sSets)$ of unpointed simplicial sets is a bijection.
Furthermore, for the simplicity of the exposition we shall assume that $Y_1,Y_2$ are just $k$-smooth varieties.
Let $Y=Y_1\sqcup Y_2$. Define a morphism
   $$\beta:Fr(-,Y_1)\times Fr(-,Y_2)\to Fr(-,Y)$$
of presheaves on $Sm/k$ by the following commutative diagram:
   $$\xymatrix{Fr(X,Y_1)\times Fr(X,Y_2)\ar[d]_{(i_1)_*\times(i_2)_*}\ar[rr]^\beta &&Fr(X,Y)\\
               Fr(X,Y)\times Fr(X,Y)\\
               Fr(X\sqcup X,Y)\ar[u]^{(j_1)^*\times(j_2)^*}_\cong\ar@/_2pc/[uurr]_{m^*}}$$
Here $i_\epsilon:Y_\epsilon\to Y$, $\epsilon=1,2$, is the
corresponding embedding.

We claim that $(\beta\alpha)|_{Fr_{2n}(-,Y)}$ is $\bb A^1$-homotopic to the inclusion
$in_{2n}: Fr_{2n}(-,Y)\to Fr(-,Y)$ and $(\alpha\beta)|_{Fr_{2n}(-,Y_1)\times Fr_{2n}(-,Y_2)}$ is $\bb A^1$-homotopic to the inclusion
   $$inc^1_{2n}\times inc^2_{2n}: Fr_{2n}(-,Y_1)\times Fr_{2n}(-,Y_2) \to Fr(-,Y_1)\times Fr(-,Y_2).$$

The first of these $\bb A^1$-homotopies will imply that $C_*(\beta)\circ C_*(\alpha)|_{C_*(Fr_{2n}(-,Y))}$
is simplicially homotopic to the inclusion $C_*(in_{2n})$, because $C_*(-)$ converts $\bb A^1$-homotopies
into simplicial ones. The second of these $\bb A^1$-homotopies will imply that
$(C_*(\alpha)\circ C_*(\beta))|_{C_*(Fr_{2n}(-,Y_1)\times C_*(Fr_{2n}(-,Y_1))}$
is simplicially homotopic to the inclusion $C_*(inc^1_{2n}\times inc^2_{2n})$. It will follow that
for any $X\in Sm/k$ and any finite simplicial set $K$ the map $C_*(\alpha)(X,K)$ is bijective. Indeed,
one should use the fact that the functor $[K,-]:\Ho(sSets)\to\Ho(sSets)$ commutes with sequential colimits
whenever $K$ is finite. It therefore remains to prove the claim.

Firstly, let us focus on the morphism $\alpha\beta$. The map $\alpha\beta$ is of the form
   $$\rho_1\times\rho_2: Fr(X,Y_1)\times Fr(X,Y_2)\to Fr(X,Y_1)\times Fr(X,Y_2).$$
Here $\rho_1$ takes a framed correspondence $(Z_1,W_1,{\underline \phi^{(1)};g_1})$
of level $n$ to the framed correspondence
$(0\times Z_1,\bb A^1\times W_1,t_1,{\underline \phi^{(1)}};g_1)$ of level $n+1$,
$\rho_2$ takes a framed correspondence
$(Z_2,W_2,{\underline \phi^{(2)};g_2})$ of level $n$
to the framed correspondence
$(1\times Z_2,\bb A^1\times W_2,t_0-1,{\underline \phi^{(2)}};g_2)$ of level $n+1$.
We first observe that the morphism $\rho_2$ is $\bb A^1$-homotopic to the
morphism $\rho^0_2: Fr(-,Y_2)\to Fr(-,Y_2)$ taking
a framed correspondence  $(Z_2,W_2,{\underline \phi^{(2)};g_2})$
of level $n$ to the framed correspondence
$(0\times Z_2,\bb A^1\times W_2,t_0,{\underline \phi^{(2)}};g_2)$ of level $n+1$.
To see this, send a framed correspondence $(Z_2,W_2,{\underline \phi^{(2)};g_2})$ of level $n$
to the framed correspondence
   $$(\Delta\times Z_2,\bb A^1\times \bb A^1\times W_2,t_0-\lambda,{\underline \phi^{(2)}};g_2)$$
of level $n+1$. Evaluating the latter framed correspondence at $\lambda=1$, we get
$\rho_2(Z_2,W_2,{\underline \phi^{(2)};g_2})$. Evaluating the same framed correspondence at $\lambda=0$,
we get $(0\times Z_2,\bb A^1\times W_2,t_0,{\underline \phi^{(2)}};g_2)$.

Let $n>0$ be an {\it even\/} integer and let $\tau\in \Sigma_{n+1}$ be the even permutation
$(n+1,1,2,...,n)$. Let $h_1$ denote the associated $\bb A^1$-homotopy from Definition~\ref{d:homotopy_2}
between the automorphisms $\phi_{id}$ and $\phi_{\tau}$ of $Fr(-,Y_1)$.
Then $h_1\circ inc^1_{n}$ is an $\bb A^1$-homotopy between
$inc^1_{n}$ and $\phi_{\tau}\circ inc^1_{n}=\rho_1|_{Fr_n(-,Y_1)}: Fr_n(-,Y_1)\to Fr(-,Y_1)$.
Let $h_2$ be the associated $\bb A^1$-homotopy from Definition \ref{d:homotopy_2} between the automorphisms
$\phi_{id}$ and $\phi_{\tau}$ of $Fr(-,Y_2)$. Then
$h_2\circ inc^2_{n}$ is an $\bb A^1$-homotopy between
$inc^2_{n}$ and $\phi_{\tau}\circ inc^2_{n}=\rho^0_2|_{Fr_n(-,Y_2)}: Fr_n(-,Y_2)\to Fr(-,Y_2)$.
Thus $(\alpha\beta)|_{Fr_{2n}(-,Y_1)\times Fr_{2n}(-,Y_2)}$ is $\bb A^1$-homotopic to the inclusion
$inc^1_{2n}\times inc^2_{2n}$.

Next, let us focus on the morphism $\beta\alpha$. Since $Y=Y_1\sqcup Y_2$ then every
framed correspondence of level $n$ from $X$ to $Y$ is of the form
$a=(Z_1\sqcup Z_2, W_1\sqcup W_2, {\underline \phi^{(1)}}\sqcup {\underline \phi^{(2)}};g_1\sqcup g_2)$.
One has,
   $$\beta\alpha(a)=(0\times Z_1\sqcup 1\times Z_2,\bb A^1\times W_1\sqcup \bb A^1\times W_2),
       (t_0,{\underline \phi^{(1)}})\sqcup (t_0-1,{\underline \phi^{(2)}});g_1\sqcup g_2).$$
Firstly, the morphism $\beta\alpha$ is $\bb A^1$-homotopic to the morphism $\rho^0: Fr(-,Y)\to Fr(-,Y)$ taking a
framed correspondence $(Z,W,{\underline \phi;g})$ to the framed correspondence
$(0\times Z,\bb A^1\times W,t_0,{\underline \phi};g_2)$ of level $n+1$.
To see this, send a framed correspondence
$a=(Z_1\sqcup Z_2, W_1\sqcup W_2, {\underline \phi^{(1)}}\sqcup {\underline \phi^{(2)}};g_1\sqcup g_2)$ of level $n$
to the framed correspondence of level $n+1$
   $$(\bb A^1\times Z_1\sqcup \Delta\times Z_2,\bb A^1\times \bb A^1\times (W_1\sqcup W_2),
       (t_0,{\underline \phi^{(1)}})\sqcup ((t_0-\lambda),{\underline \phi^{(2)}});g_1\sqcup g_2).$$
Evaluating the latter framed correspondence of level $n+1$ at $\lambda=1$, we get $\beta\alpha(a)$.
Evaluating the same framed correspondence $\lambda=0$, we get
$\rho^0(a)$. Furthermore, using the associated homotopy from Definition~\ref{d:homotopy_2},
we see that $\rho^0|_{Fr_{2n}(-,Y)}$ is $\bb A^1$-homotopic to the inclusion $in_{2n}$.
Hence $\beta\alpha|_{Fr_{2n}(-,Y)}$ is $\bb A^1$-homotopic to the inclusion
$in_{2n}$, as was to be proved.
\end{proof}

Now Additivity Theorem~\ref{additivity} together with the Segal
machine~\cite{S} imply the following

\begin{thm}\label{addconseq}
Let $Y\in\Delta^{\op} SmOp(Fr_0(k))$. Then the $\Gamma$-space
$K\in\Gamma^{\op}\mapsto C_*Fr(-,Y\otimes K)$ is sectionwise
special. As a consequence, the framed motive $M_{fr}(Y)$ of $Y$ is
sectionwise a positively fibrant $\Omega$-spectrum, which is sectionwise
(respectively locally in the Nisnevich topology) an $\Omega$-spectrum
whenever the motivic space $C_*Fr(-,Y)$ is sectionwise
(respectively locally in the Nisnevich topology) connected.
\end{thm}

\begin{rem}\label{exinfty}{\rm
Whenever we say that $M_{fr}(Y)$ is a (positively) fibrant
$\Omega$-spectrum we tacitly assume that each of its spaces
$C_*Fr(-,Y\otimes S^n)$ is replaced with $Ex^\infty(C_*Fr(-,Y\otimes
S^n))$, where $Ex^\infty$ refers to Kan's complex. The spaces
$Ex^\infty(C_*Fr(-,Y\otimes S^n))$ are then spaces with framed
correspondences and sectionwise fibrant simplicial sets. A detailed
description of the spaces will be given in
Section~\ref{mgfrfunctor}. We can equally take any naive sectionwise
fibrant resolution functor in the category of spaces with framed
correspondences (which exist by using standard arguments) in place
of $Ex^\infty$.

}\end{rem}

It is worth to mention that the latter theorem is a kind of the
``Cancellation Theorem for framed motives in the $S^1$-direction"
(whose meaning will become clear in the proof of
Theorem~\ref{Segal_Thm_II}(2)). One should also stress  the motivic
spaces $C_*Fr(-,X\times T^n)_f$ from Theorem~\ref{Segal_Thm_II} are
zero spaces of sheaves of $S^1$-spectra $M_{fr}(X\times T^n)_f$
(these are level local Nisnevich replacements of the framed motives
$M_{fr}(X\times T^n)$). So each space $C_*Fr(-,X\times T^n)_f$ is
part of the $S^1$-spectrum $M_{fr}(X\times T^n)_f$. If we can prove
that each $S^1$-spectrum $M_{fr}(X\times T^n)_f$, $n>0$, is
motivically fibrant over a field of characteristic different from 2,
then each space $C_*Fr(-,X\times T^n)_f$ becomes motivically fibrant
(what is claimed in Theorem~\ref{Segal_Thm_II}).
{\it In turn, with a
little extra work the same is true for spaces $C_*Fr(-,X\times
T^n)\{1/2\}_f$ we are going to define now.
}

Suppose $Y$ is a simplicial object in $SmOp(Fr_0(k))$, then
$M_{fr}(Y)\{1/2\}$ is not a positive $\Omega$-spectrum in contrast
to $M_{fr}(Y)$. The problem is that each structure map
$C_*Fr(Y\otimes\cc S^n)\to\Omega C_*Fr(Y\otimes\cc S^{n+1})$ of the
spectrum $M_{fr}(Y)\{1/2\}$ factors as
   $$C_*Fr(Y\otimes\cc S^n)\to\Omega C_*Fr(Y\otimes\cc S^{n}\otimes S^1)\xrightarrow{2_*}\Omega C_*Fr(Y\otimes\cc S^{n+1}).$$
The left map is a weak equivalence of spaces for all $n>0$, but the
right arrow is such only ``after inverting 2" explained below.

It is important to observe that the composite map
   $$C_*Fr(Y\otimes\cc S^{n}\otimes S^1)\xrightarrow{2_*} C_*Fr(Y\otimes\cc S^{n+1})
     \xrightarrow{\iota^*}C_*Fr(Y\otimes\cc S^{n}\otimes S^1),$$
where $\iota^*$ is a weak equivalence induced by the ``last vertex
weak equivalence" $\iota:\cc S^1=\sd^k S^1\to S^1$, is such that it
is equal to the homomorphism $2\times\id_{\pi_*(C_*Fr(Y\otimes\cc
S^{n}\otimes S^1))}$ on presheaves of homotopy groups. This easily
follows from the fact that these presheaves of homotopy groups are
the same with the stable homotopy groups of their Segal
$S^1$-spectra and basic facts of $\Gamma$-spaces.

In particular, there is a canonical isomorphism of presheaves of
stable homotopy groups
   $$\pi_*(M_{fr}(Y)\{1/2\})\cong\pi_*(M_{fr}(Y))\otimes\bb Z[1/2]$$
and hence the canonical map of spectra $\mu:M_{fr}(Y)\to
M_{fr}(Y)\{1/2\}$ is a ``$2^{-1}$-stable weak equivalence" in the
sense that $\pi_*(\mu)\otimes\bb Z[1/2]$ is an isomorphism.

The inversion of the map $2_*$ above is achieved if we take the space
   $$C_*Fr(Y\otimes\cc S^n)\{1/2\}:=\colim_k\Omega^kC_*Fr(Y\otimes\cc S^{n+k}).$$
It follows from above that each map of motivic spaces
   \begin{equation*}\label{inv2}
    \tau:C_*Fr(Y\otimes\cc S^n)\to C_*Fr(Y\otimes\cc S^n)\{1/2\},\quad n>0,
   \end{equation*}
induces an isomorphism on homotopy presheaves after tensoring with
$\bb Z[1/2]$ and the homotopy presheaves of $C_*Fr(Y\otimes\cc
S^n)\{1/2\}$ are already presheaves of $\bb Z[1/2]$-modules. In turn,
if $C_*Fr(Y)$ is locally connected, say if $Y= X\times T^n$ with $X\in Sm/k$ and $n>0$
(see~\cite[A.1]{GPN}), then $\tau:C_*Fr(Y)\to C_*Fr(Y)\{1/2\}$
induces isomorphisms on sheaves of homotopy groups with $\bb Z[1/2]$-coefficients
$\pi_*^{\nis}(\tau)\otimes\bb Z[1/2]$ (for this we use Theorem~\ref{addconseq}). Moreover,
   $$\pi_*(C_*Fr(Y\otimes\cc S^n)\{1/2\})=\pi_{*-n}(M_{fr}(Y)\{1/2\})\cong\pi_{*-n}(M_{fr}(Y))\otimes\bb Z[1/2].$$

We shall also consider a spectrum
   $$\wt{M}_{fr}(Y)\{1/2\}:=\Omega^\infty M_{fr}(Y)\{1/2\}=(C_*Fr(Y)\{1/2\},C_*Fr(Y\otimes\cc S^1)\{1/2\},\ldots).$$
Then the natural morphism of spectra
$\ell:{M}_{fr}(Y)\{1/2\}\to\wt{M}_{fr}(Y)\{1/2\}$ is a stable
equivalence by construction and $\wt{M}_{fr}(Y)\{1/2\}$ becomes an
$\Omega$-spectrum. By $\wt{M}_{fr}(Y)\{1/2\}_f$ we shall mean a
fibrant replacement of the spectrum $\wt{M}_{fr}(Y)\{1/2\}$ in the
level injective Nisnevich local model structure of spectra. Its
spaces are of the form $CFr(Y\otimes\cc S^n)\{1/2\}_f$, $n\geq 0$.
More generally, for every simplicial sheaf $\cc G$ we shall write
$\wt{\cc M}_{fr}(\cc G)\{1/2\}$ to denote the spectrum
$\Omega^\infty \wt{\cc M}_{fr}(\cc G)\{1/2\}$. Its spaces will be
denoted by $C_*\cc Fr(\cc G\otimes\cc S^n)\{1/2\}$, $n\geq 0$.

We document the above arguments as follows.

\begin{prop}\label{frac12}
Suppose $Y$ is a simplicial object in $SmOp(Fr_0(k))$, then the
presheaves of homotopy groups of $\wt{M}_{fr}(Y)\{1/2\}$ and
$C_*Fr(Y\otimes\cc S^n)\{1/2\}$, $n\geq 0$, are presheaves of $\bb
Z[1/2]$-modules and the canonical morphisms
   $$\ell\circ\mu:M_{fr}(Y)\to\wt{M}_{fr}(Y)\{1/2\},\quad\tau:C_*Fr(Y\otimes\cc S^n)\to C_*Fr(Y\otimes\cc S^n)\{1/2\},\quad n> 0,$$
induce isomorphisms on presheaves of homotopy groups with $\bb
Z[1/2]$-coefficients $\pi_*(\ell\circ\mu)\otimes\bb Z[1/2]$ and
$\pi_*(\tau)\otimes\bb Z[1/2]$ respectively. Furthermore, if
$C_*Fr(Y)$ is locally connected (e.g. if $Y= X\times T^n$ with $X\in
Sm/k$ and $n>0$), then $\tau:C_*Fr(Y)\to C_*Fr(Y)\{1/2\}$ induces
isomorphisms on sheaves of homotopy groups with $\bb
Z[1/2]$-coefficients $\pi_*^{\nis}(\tau)\otimes\bb Z[1/2]$.
\end{prop}

If we can prove that each $S^1$-spectrum $M_{fr}(X\times T^n)$ or
$\wt{M}_{fr}(X\times T^n)\{1/2\}_f$, $n>0$, is motivically fibrant,
then each space $C_*Fr(-,X\times T^n)_f$ or $C_*Fr(-,X\times
T^n)\{1/2\}_f$ becomes motivically fibrant (what is claimed in
Theorem~\ref{Segal_Thm_II}). Therefore our next goal is to
investigate this kind of fibrant motivic spaces coming from relevant
$S^1$-spectra in more detail.

\section{Fibrant motivic spaces generated by $S^1$-spectra}

Let $sShv_\bullet(Sm/k)$ denote the category of Nisnevich sheaves of
pointed simplicial sets. It has the injective model
structure~\cite{Jar1} in which cofibrations are the monomorphisms
and weak equivalences are stalkwise weak equivalences of simplicial
sets. The category of $S^1$-spectra $Sp_{S^1}(sShv_\bullet(Sm/k))$ associated with
$sShv_\bullet(Sm/k)$ will also be called the {\it category of
ordinary $S^1$-spectra of simplicial Nisnevich sheaves}. It has
level and stable model structures (the standard references here
are~\cite{H,Jar}). In this section we describe a class of motivic
spaces coming from ordinary $S^1$-spectra of simplicial Nisnevich
sheaves, which are fibrant in the motivic model category
$sShv_\bullet(Sm/k)_{\mot}$ of Morel--Voevodsky~\cite{MV}. This
class occurs in our analysis. Recall that
$sShv_\bullet(Sm/k)_{\mot}$ is obtained from $sShv_\bullet(Sm/k)$ by
Bousfield localization with respect to the projections $p:X\times\bb
A^1\to X$, $X\in Sm/k$. As above, the level/stable model category of
$S^1$-spectra associated with $sShv_\bullet(Sm/k)_{\mot}$ will also
be called the {\it level/stable injective model category of
$S^1$-spectra}.

\begin{prop}\label{prop:A1_fibrant}
Let $E$ be an $S^1$-spectrum in the category of simplicial Nisnevich
sheaves such that each space $E_n\in sShv_\bullet(Sm/k)$ of the
spectrum is fibrant in $sShv_\bullet(Sm/k)$. Suppose $E$ is
sectionwise an $\Omega$-spectrum in the category of ordinary
$S^1$-spectra of pointed simplicial sets. Suppose $E$ is locally
$(-1)$-connected in the Nisnevich topology. Finally suppose that for
any integer $n$, the Nisnevich sheaf $\pi^{\nis}_n(E)$ is strictly
homotopy invariant. Then the following statements are true:
\begin{enumerate}
\item every motivic space $E_n$ of $E$ is motivically fibrant;
\item $E$ is fibrant in the
stable injective motivic model structure of $S^1$-spectra.
\end{enumerate}
\end{prop}

\begin{proof}
(1). Since $E$ is sectionwise an $\Omega$-spectrum, then every
$E_n$, $n\geq 0$, is sectionwise fibrant. Therefore it suffices to
prove that $E_n$ is $\mathbb A^1$-local. So we have to check that
for any smooth variety $X$ the projection $p:X\times\mathbb A^1\to
X$ induces a weak equivalence of simplicial sets $p^*: E_n(X)\to
E_n(X\times \mathbb A^1)$. Since $E$ is sectionwise an
$\Omega$-spectrum it suffices to check that the pull-back map
$p^*:E(X)\to E(X\times \mathbb A^1)$ is a stable equivalence of
ordinary $S^1$-spectra. So it is sufficient to verify that for any
integer $r$ the map $p^*: \pi_r(E(X)) \to \pi_r(E(X\times \mathbb
A^1))$ is an isomorphism. Consider two convergent spectral sequences
   $$H^p_{Nis}(X,\pi^{\nis}_q(E))\Rightarrow \pi_{q-p}(E(X))\quad \text{and}
   \quad H^p_{Nis}(X\times \mathbb A^1,\pi^{\nis}_q(E))\Rightarrow\pi_{q-p}(E(X\times \mathbb A^1)).$$
The projection $p$ induces a pull-back morphism between these two
spectral sequences. This morphism is an isomorphism on the second
page, because each Nisnevich sheaf $\pi^{\nis}_n(E)$ is strictly
homotopy invariant by assumption. Hence $p^*: \pi_r(E(X)) \to
\pi_r(X\times \mathbb A^1)$ is an isomorphism.
Assertion (2) easily follows from assertion (1).
\end{proof}

Below we shall need the following

\begin{lem}\label{puchk}
Let $F$ be a radditive framed presheaf of Abelian groups. Then the
associated sheaf in the Nisnevich topology has a unique structure of a
framed presheaf such that the map $F\to F_{\nis}$ is a map of framed presheaves.
\end{lem}

\begin{proof}
This is proved in~\cite[4.5]{Voe2}.
\end{proof}

\begin{rem}{\rm
We should stress that~\cite[4.5]{Voe2} used in the proof of the preceding lemma
is not true unless $F$ is radditive.
}\end{rem}

\begin{cor}\label{cor:A1_fibrant}
Let $k$ be an infinite perfect field and $E$ be an $S^1$-spectrum of simplicial Nisnevich sheaves with
framed correspondences. Suppose $E$ is locally an $\Omega$-spectrum
in the Nisnevich topology. Suppose it is $(-1)$-connected locally in
the Nisnevich topology. Finally suppose that if $\charr k\ne 2$ (respectively $\charr k=2$)  then for any integer $n$,
the Nisnevich presheaf $\pi_n(E)$ is homotopy invariant,
quasi-stable and radditive (respectively a homotopy invariant,
quasi-stable and radditive presheaf of $\bb Z[1/2]$-modules). Let $E\to E^f$ be a fibrant replacement
of $E$ in the level injective model structure of ordinary sheaves of
$S^1$-spectra. Then the following statements are true:
\begin{enumerate}
\item each motivic space $E_{n}^f$ is motivically fibrant;
\item the spectrum $E^f$ is fibrant in the
stable injective motivic model category of $S^1$-spectra.
\end{enumerate}
\end{cor}

\begin{proof}
The Nisnevich presheaf $\pi_n(E)$ is a radditive framed presheaf.
Hence the associated Nisnevich sheaf $\pi^{\nis}_n(E)$ is equipped
with a unique structure of a framed presheaf such that the canonical
morphism $\pi_n(E)\to \pi^{\nis}_n(E)$ is a morphism of framed
presheaves by Lemma~\ref{puchk}. By~\cite[1.1]{GP4} the Nisnevich sheaf $\pi^{\nis}_n(E)$
is strictly homotopy invariant, hence so is the Nisnevich sheaf
$\pi^{\nis}_n(E^f)$ of $E^f$. Our statement now follows from the
previous proposition.
\end{proof}

\begin{cor}\label{cor:A1_fibrant_2}
Let $k$ be an infinite perfect field and $Y$ be a simplicial object
in $SmOp(Fr_0(k))$. Suppose the simplicial Nisnevich sheaf
$C_*Fr(Y)$ is locally connected in the Nisnevich topology. Let
$M_{fr}(Y) \to M_{fr}(Y)_f$ and $\wt{M}_{fr}(Y)\{1/2\}
\to\wt{M}_{fr}(Y)\{1/2\}_f$ be fibrant replacements in the level
injective model structure of ordinary sheaves of $S^1$-spectra.
Then:
\begin{enumerate}
\item if $\charr k\ne 2$ (respectively of any characteristic) then
$M_{fr}(Y)_f$ (respectively $\wt{M}_{fr}(Y)\{1/2\}_f$) is fibrant in
the stable injective motivic model category of $S^1$-spectra;

\item if $\charr k\ne 2$ (respectively of any characteristic) then
for any $n\geq 0$ and any fibrant relpacement $C_*(Fr(-,Y\otimes
{S}^n))\to C_*(Fr(-,Y\otimes {S}^n))_{f}$ in $sShv_\bullet(Sm/k)$
(respectively $C_*(Fr(-,Y\otimes {\cc S}^n))\{1/2\}\to
C_*(Fr(-,Y\otimes {\cc S}^n))\{1/2\}_{f}$), the space
$C_*(Fr(-,Y\otimes {S}^n))_{f}$ (respectively $C_*(Fr(-,Y\otimes{\cc S}^n))\{1/2\}_{f}$) is fibrant in $sShv_\bullet(Sm/k)_{\mot}$.
\end{enumerate}
\end{cor}

\begin{proof}
Suppose $\charr k\ne 2$. The zeroth space $C_*Fr(Y)$ of the framed
spectrum $M_{fr}(Y)$ is locally connected. Hence the framed spectrum
$M_{fr}(Y)$ is locally an $\Omega$-spectrum by the Segal
machine~\cite{S} and Theorem~\ref{addconseq}. The presheaves
$\pi_n(M_{fr}(Y))$ are homotopy invariant, quasi-stable and
radditive framed presheaves. Corollary~\ref{cor:A1_fibrant} implies
assertion (1). To prove the second one, note that any two fibrant
replacements of $C_*Fr(Y)$ in $sShv_\bullet(Sm/k)$ are sectionwise
weakly equivalent. Hence Corollary~\ref{cor:A1_fibrant}(1) implies
the second assertion. The statement for $\wt{M}_{fr}(Y)\{1/2\}_f$
and $C_*(Fr(-,Y\otimes {\cc S}^n))\{1/2\}_{f}$ is proved in a
similar fashion if we use the fact that the presheaves of (stable)
homotopy groups of $\wt{M}_{fr}(Y)\{1/2\}$ and $C_*(Fr(-,Y\otimes
{\cc S}^n))\{1/2\}$ are homotopy invariant, quasi-stable and
radditive framed presheaves of $\bb Z[1/2]$-modules (see Proposition~\ref{frac12} as well).
\end{proof}

Under the notation of the preceding corollary we can now prove the
following

\begin{cor}\label{cor:A1_fibrant_3}
Let $k$ be an infinite perfect field. If $\charr k\ne 2$ then the
following statements are true:

\begin{enumerate}
\item For any integer $n> 0$, the $S^1$-spectrum $M_{fr}(X\times T^n)_f$ is motivically fibrant
and the motivic space $C_*Fr(X\times T^n)_f$ is motivically fibrant.

\item For any integer $n\geq 0$, the $S^1$-spectrum $M_{fr}(X\times
T^n\times \bb G_m^{\wedge 1}\otimes { S}^1)_f$ is motivically fibrant and the
motivic space $C_*(Fr(X\times T^n\times \bb G_m^{\wedge 1}\otimes {S}^1))_f$ is
motivically fibrant.

\item For any integer $n\geq 0$, the $S^1$-spectrum $M_{fr}(X\times
T^n\times (\A^1//\bb G_m))_f$ is motivically fibrant and the motivic
space $C_*Fr(X\times T^n\times (\A^1//\bb G_m))_f$ is motivically
fibrant.
\end{enumerate}

In turn, in the case of any characteristic of $k$ the following
statements are true:

\begin{enumerate}
\item For any integer $n> 0$, the $S^1$-spectrum $\wt{M}_{fr}(X\times T^n)\{1/2\}_f$ is motivically fibrant
and the motivic space $C_*Fr(X\times T^n)\{1/2\}_f$ is motivically
fibrant.

\item For any integer $n\geq 0$, the $S^1$-spectrum $\wt{M}_{fr}(X\times
T^n\times \bb G_m^{\wedge 1}\otimes { S}^1)\{1/2\}_f$ is motivically
fibrant and the motivic space $C_*(Fr(X\times T^n\times \bb
G_m^{\wedge 1}\otimes {S}^1))\{1/2\}_f$ is motivically fibrant.

\item For any integer $n\geq 0$, the $S^1$-spectrum $\wt{M}_{fr}(X\times
T^n\times (\A^1//\bb G_m))\{1/2\}_f$ is motivically fibrant and the
motivic space $C_*Fr(X\times T^n\times (\A^1//\bb G_m))\{1/2\}_f$ is
motivically fibrant.
\end{enumerate}
\end{cor}

\begin{proof}
Suppose $\charr k\ne 2$. By~\cite[A.1]{GPN} the spaces
$C_*Fr(X\times T^n)$, $C_*Fr(X\times T^n\times (\A^1//\bb G_m))$ of
the corollary are locally connected in the Nisnevich topology. The
space $C_*(Fr(X\times T^n\times \bb G_m^{\wedge 1}\otimes {S}^1))$
is, moreover, sectionwise connected. Now our assertions follow from
Corollary~\ref{cor:A1_fibrant_2}. In the case of any characteristic
of $k$ the second part of our statement is proved in a similar fashion.
\end{proof}

We should stress that the previous corollary is of great utility in
the proof of Theorem~\ref{Segal_Thm_II}.

\section{Comparing framed motives}\label{compmotives}

One of the key properties of framed motives is that they convert
motivic equivalences between certain motivic spaces to Nisnevich
local weak equivalences. Some of such motivic equivalences are
discussed in this section. Its main result, Theorem~\ref{compframes}, is an essential step in
proving Theorem~\ref{Segal_Thm_II}. We start with preparations.

Every category $\cc A$ with coproducts and zero object 0 has a natural action of finite pointed sets.
For example, $\cc A=Fr_0(k)$ or, more generally, $\cc A=SmOp(Fr_0(k))$.
Precisely, if $A\in\cc A$ and $(K,*)$ is a finite pointes set, then we set $A\otimes K:=A\sqcup\ldots\sqcup A$,
where the coproduct is taken over non-base elements of $K$. Clearly, $A\otimes K$ is functorial
in $A$ and $K$. Note that $A\otimes *=0$ and $0\otimes K=0$.

This action is extended to an action of finite pointed simplicial sets on the category $\Delta^{\op}\cc A$ of
simplicial objects in $\cc A$. Let $(I,1)$ denote the pointed simplicial set $\Delta[1]$ with
basepoint 1. The {\it cone\/} of $A\in\cc A$ is the simplicial object
$A\otimes I$ in $\cc A$. There is a natural morphism $i_0:A\to
A\otimes I$ in $\Delta^\op\cc A$. Given a morphism $f:A\to B$ in
$\cc A$, denote by $B//_f A$ a simplicial object in $\cc A$ which is
obtained from the pushout in $\Delta^\op\cc A$ of the diagram
   $$B\xleftarrow f A\bl{i_0}\hookrightarrow A\otimes I$$
We can think of $B//_f A$ as a cone of $f$. In practice if $A$ is a subobject of $B$,
we shall also write $B// A$ to denote the simplicial object $B//_\iota A$ in
$\cc A$ with $\iota:A\to B$ the inclusion.
We have a sequence of simplicial objects in $\cc A$
   $$A\xrightarrow f B\to B//_f A\to A\otimes S^1.$$
In practice, this sequence is a typical ``triangle" of an associated triangulateed category
(see, e.g., the proof of Theorem~\ref{compframes}).

\begin{notn}\label{edem}{\rm
(1) In the particular example when $\cc A=Fr_0(k)$ and $(X,x)$ is a pointed
smooth variety, we shall write $X^{\wedge 1}$ to denote the cone
$X//x$ of the inclusion $x\hookrightarrow X$. The most common example
is $\bb G_m^{\wedge 1}$ given by the pointed scheme $(\bb G_m,1)$.
Regarding $\Delta^{\op}Fr_0(k)$ as a full subcategory of the symmetric
monoidal category $\Delta^{\op}SmOp(Fr_0(k))$, we can take the $n$th monoidal power
of $X//x$ for every $n>0$, which we shall denote by $X^{\wedge n}$.
The most common example is $\bb G_m^{\wedge n}$.

(2) If $X$ is an open subset of $Y\in Sm/k$, we shall denote by $(Y//X)^{\wedge n}$
the $n$th monoidal power of $Y//X\in\Delta^{\op}Fr_0(k)$. The most common
example will be $(\bb A^1//\bb G_m)^{\wedge n}$.

}\end{notn}

If  $\cc A=SmOp(Fr_0(k))$ then the symmetric monoidal product on $SmOp(Fr_0(k))$
defined above gives rise to a natural pairing
   $$SmOp(Fr_0(k))\times\Delta^{\op}Fr_0(k)\to\Delta^{\op}SmOp(Fr_0(k)).$$
Composing it with the framed motive functor, we get a functor
   $$M_{fr}:SmOp(Fr_0(k))\times\Delta^{\op}Fr_0(k)\to Sp_{S^1}(sShv_\bullet(Sm/k)).$$
Taking pairings of $(X\times\bb A^n,X\times\bb A^n-X\times0)\in SmOp(Fr_0(k))$
with $\A^1//\bb G_m\in\Delta^{\op}Fr_0(k)$ and $\bb G_m^{\wedge 1}\otimes {S}^1\in\Delta^{\op}Fr_0(k)$,
we get the framed motives $M_{fr}(X\times T^n\times (\A^1//\bb G_m))$
and $M_{fr}(X\times T^{n}\times\bb G_m^{\wedge 1}\otimes {S}^1)$ respectively.

Consider a commutative diagram in $\Delta^{\op}Fr_0(k)$
   \begin{equation}\label{ska-vs-dinamo}
    \xymatrix{\bb G_m\ar[r]\ar[d]&\bb A^1\ar[r]\ar[d]&\bb A^1//\bb G_m\ar[d]^{\alpha}\\
               \bb G_m^{\wedge 1}\ar[r]\ar@{=}[d]&\bb A^{\wedge 1}\ar[r]\ar[d]
               &\bb A^{\wedge 1}//\bb G_m^{\wedge 1}\ar[d]^\beta\\
               \bb G_m^{\wedge 1}\ar[r]&\emptyset\ar[r]&\bb G_m^{\wedge 1}\otimes {S}^1}
   \end{equation}
It induces a morphism of framed motives
   $$\beta_*\alpha_*:M_{fr}(X\times T^n\times (\A^1//\bb G_m))\to M_{fr}(X\times T^{n}\times\bb G_m^{\wedge 1}\otimes {S}^1),\quad n\geq 0,$$
and
   $$\beta_*\{1/2\}\alpha_*\{1/2\}:M_{fr}(X\times T^n\times (\A^1//\bb G_m))\{1/2\}\to
       M_{fr}(X\times T^{n}\times\bb G_m^{\wedge 1}\otimes {S}^1)\{1/2\},\quad n\geq 0.$$
The main result of this section is as follows.

\begin{thm}\label{compframes}
The morphism $\beta_*\alpha_*$ (respectively $\beta_*\{1/2\}\alpha_*\{1/2\}$) is a stable Nisnevich local weak
equivalence of $S^1$-spectra whenever the base field $k$ is of characteristic different from 2 (respectively of any characteristic).
\end{thm}

We postpone the proof of the theorem. It requires the language of
``linear framed motives".

\begin{defs}\label{stab}{\rm
Let $X$ and $Y$ be smooth schemes. Denote by
\begin{itemize}
\item[$\diamond$]
${\bb Z}Fr_n(X,Y):=\widetilde{\mathbb{Z}}[Fr_n(X,Y)]=\mathbb{Z}[Fr_n(X,Y)]/\mathbb{Z}\cdot
0_n$, i.e the free abelian group generated by the set $Fr_n(X,Y)$
modulo $\mathbb{Z}\cdot 0_n$;

\item[$\diamond$]
${\bb Z}F_n(X,Y):={\bb Z}Fr_n(X,Y)/A$, where $A$ is a subgroup
generated by the elementts
\begin{multline*}
(Z\sqcup Z', U,(\varphi_1,\varphi_2,\dots,\varphi_n),g) - \\
-(Z, U\setminus
Z',(\varphi_1,\varphi_2,\dots,\varphi_n)|_{U\setminus
Z'},g|_{U\setminus Z'}) - (Z',{U\setminus
Z},(\varphi_1,\varphi_2,\dots,\varphi_n)|_{U\setminus
Z},g|_{U\setminus Z}).
\end{multline*}
\end{itemize}
We shall also refer to the latter relation as the {\it additivity
property for supports}. In other words, it says that a framed
correspondence in ${\bb Z}F_n(X,Y)$ whose support is a disjoint
union $Z\sqcup Z'$ equals the sum of the framed correspondences with
supports $Z$ and $Z'$ respectively. Note that ${\bb Z}F_n(X,Y)$ is
$\mathbb{Z}[Fr_n(X,Y)]$ modulo the subgroup generated by the
elements as above, because $0_n=0_n+0_n$ in this quotient group,
hence $0_n$ equals zero. Indeed, it is enough to observe that the
support of $0_n$ equals $\emptyset\sqcup\emptyset$ and then apply
the above relation to this support.

The elements of ${\bb Z}F_n(X,Y)$ are called {\it linear framed
correspondences of level $n$} or just {\it linear framed
correspondences}. It is worth to mention that ${\bb Z}F_n(X,Y)$
is a free abelian group generated by
the elements of $Fr_n(X,Y)$ with connected support.

Denote by ${\bb Z}F_*(k)$ an additive category whose objects are
those of $Sm/k$ with $\text{Hom}$-groups defined as
   $$\text{Hom}_{{\bb Z}F_*(k)}(X,Y)=\bigoplus_{n\geq 0}{\bb Z}F_n(X,Y).$$
The composition is induced by the composition in the category
$Fr_*(k)$.

There is a functor $Sm/k \to {\bb Z}F_*(k)$ which is the identity on
objects and which takes a regular morphism $f: X\to Y$ to the linear
framed correspondence $1\cdot(X,X\times \A^0,pr_{\A^0},f\circ pr_X)
\in {\bb Z}F_0(k)$.

}\end{defs}

\begin{defs}\label{d:boxpairing_linear}{\rm
Let $X,Y,S$ and $T$ be schemes. The external product from
Definition~\ref{d:boxpairing} induces a unique external product
\[
{\bb Z}F_n(X,Y)\times{\bb Z}F_m(S,T) \xrightarrow{-\boxtimes -} {\bb
Z}F_{n+m}(X\times S, Y\times T)
\]
such that for any elements $a \in Fr_n(X,Y)$ and $b\in Fr_m(S,T)$
one has $1\cdot a\boxtimes 1\cdot b=1\cdot(a\boxtimes b)\in {\bb
Z}F_{n+m}(X\times S, Y\times T)$.

}\end{defs}

For the constant morphism $c\colon \bb A^1\to pt$, we set
\[\Sigma:=-\boxtimes 1\cdot(\{0\},\A^1,t,c)\colon{\bb Z}F_n(X,Y)\to {\bb Z}F_{n+1}(X,Y)\]
and refer to it as the \textit{suspension}.

\begin{defs}\label{d:stable_linear_fr_corr}{\rm
For any $k$-smooth variety $Y$ there is a presheaf ${\bb Z}F_*(-,Y)$
on the category ${\bb Z}F_*(k)$ represented by $Y$. We also have a
${\bb Z}F_*(k)$-presheaf
   $${\bb Z}F(-,Y):=\colim({\bb Z}F_0(-,Y)\xrightarrow{\Sigma}{\bb Z}F_1(-,Y)
     \xrightarrow{\Sigma}\cdots\xrightarrow{\Sigma}{\bb Z}F_n(-,Y)\xrightarrow{\Sigma}\cdots).$$
For a $k$-smooth variety $X$, the elements of ${\bb Z}F(X,Y)$ are
also called {\it stable linear framed correspondences}. Stable
linear framed correspondences {\it do not form} morphisms of a
category.}
\end{defs}

\begin{rem}{\rm
For any $X,Y$ in $Sm/k$ one has ${\bb Z}F_*(-,X\sqcup Y)={\bb
Z}F_*(-,X)\oplus{\bb Z}F_*(-,Y)$ and ${\bb Z}F(-,X\sqcup Y)={\bb
Z}F(-,X)\oplus{\bb Z}F(-,Y)$.

}\end{rem}

For every $(Y,Y-S)\in SmOp(Fr_0(k))$, ${\bb Z}F_*(k)$-presheaves
${\bb Z}F_*(-,Y/Y-S)$ and ${\bb Z}F(-,Y/Y-S)$ are defined in a similar fashion.
Namely, each ${\bb Z}F_n(X,Y/Y-S)$ is the free abelian group generated by
the elements of $Fr_n(X,Y/Y-S)$ with connected support. Then we set
${\bb Z}F_*(X,Y/Y-S)=\oplus_{n\geq 0}{\bb Z}F_n(X,Y/Y-S)$ and ${\bb Z}F(X,Y/Y-S)$
is obtained from ${\bb Z}F_*(X,Y/Y-S)$ by stabilization in the $\Sigma$-direction.

For every $Y\in\Delta^{\op}SmOp(Fr_0(k))$ there is a $\Gamma$-space
   $$(K,*)\in\Gamma^{\op}\mapsto {\bb Z}F(-,(Y/Y-S)\otimes K).$$

\begin{defs}\label{lfrmotive}{\rm
The {\it linear framed motive $LM_{fr}(Y)$ of $Y\in\Delta^{\op}
SmOp(Fr_0(k))$\/} is the Segal $S^1$-spectrum $(C_*\bb ZF(-,Y),C_*\bb ZF(-,Y\otimes
S^1),C_*\bb ZF(-,Y\otimes S^2),\ldots)$ of spaces in $sShv_\bullet(Sm/k)$ associated
with the $\Gamma$-space $K\in\Gamma^{\op}\mapsto C_*\bb ZF(-,Y\otimes
K)=\bb ZF(\Delta^\bullet\times-,Y\otimes K)$.

The {\it linear framed motive $LM_{fr}(Y)\{1/2\}$ of $Y\in\Delta^{\op}
SmOp(Fr_0(k))$ with $1/2$-coefficients\/} is the $S^1$-spectrum $(C_*\bb ZF(-,Y),C_*\bb ZF(-,Y\otimes\cc
S^1),C_*\bb ZF(-,Y\otimes\cc S^2),\ldots)$ of motivic spaces in $sShv_\bullet(Sm/k)$. Equivalently,
it is the value of the $\Gamma$-space $K\in\Gamma^{\op}\mapsto C_*\bb ZF(-,Y\otimes
K)$ at the spectrum $\bb S[1/2]$.

}\end{defs}

Note that $LM_{fr}(Y)$ is the Eilenberg--Mac~Lane spectrum associated with the
complex of Nisnevich sheaves $C_*\bb ZF(-,Y)$ (we often identify simplicial abelian groups
with their normalized complexes by the Dold--Kan correspondence). Therefore $\pi_*(LM_{fr}(Y))=
H_*(C_*\bb ZF(-,Y))$.

The following lemma is proved similar to Proposition~\ref{frac12}.

\begin{lem}\label{fraclin12}
Suppose $Y$ is a simplicial object in $SmOp(Fr_0(k))$, then the presheaves of homotopy
groups of ${LM}_{fr}(Y)\{1/2\}$ are presheaves of $\bb Z[1/2]$-modules and the canonical morphism
   $$\mu:LM_{fr}(Y)\to{LM}_{fr}(Y)\{1/2\}$$
induces isomorphisms on presheaves of homotopy groups with $\bb Z[1/2]$-coefficients
$\pi_*(\mu)\otimes\bb Z[1/2]$.
\end{lem}

Denote by $LM_{fr}(Y)[1/2]$ the $S^1$-spectrum
   $$(C_*\bb ZF(-,Y)\otimes\bb Z[1/2],C_*\bb ZF(-,Y\otimes
        S^1)\otimes\bb Z[1/2],C_*\bb ZF(-,Y\otimes S^2)\otimes\bb Z[1/2],\ldots).$$
It is the Segal $S^1$-spectrum associated with the $\Gamma$-space $K\in\Gamma^{\op}\mapsto C_*\bb ZF(-,Y\otimes
K)\otimes\bb Z[1/2]:=\bb ZF(\Delta^\bullet\times-,Y\otimes K)\otimes\bb Z[1/2]$. By the very construction,
$\pi_*(LM_{fr}(Y)[1/2])=\pi_*(LM_{fr}(Y))\otimes\bb Z[1/2]$.
We also denote by $\wt{LM}_{fr}(Y)\{1/2\}:=\Omega^{\infty}{LM}_{fr}(Y)\{1/2\}$. The canonical morphism
$\zeta:{LM}_{fr}(Y)\{1/2\}\to\wt{LM}_{fr}(Y)\{1/2\}$ is a stable equivalence. Notice that ${LM}_{fr}(Y)[1/2],\wt{LM}_{fr}(Y)\{1/2\}$
are sectionwise $\Omega$-spectra.

\begin{cor}\label{fraclincor12}
The composite map $LM_{fr}(Y)\bl\mu\to{LM}_{fr}(Y)\{1/2\}\bl\zeta\to\wt{LM}_{fr}(Y)\{1/2\}$ factors as
   $$LM_{fr}(Y)\bl\rho\to{LM}_{fr}(Y)[1/2]\bl\xi\to\wt{LM}_{fr}(Y)\{1/2\}$$
and $\xi$ is a level weak equivalence of spectra.
\end{cor}

\begin{proof}[Proof of Theorem~\ref{compframes}]
Suppose $\charr k\ne 2$. Since the spectra $M_{fr}(X\times T^n\times (\A^1//\bb G_m))$ and
$M_{fr}(X\times T^{n}\times\bb G_m^{\wedge 1}\otimes {S}^1)$
are sectionwise connected, $\beta_*\alpha_*$ is a stable Nisnevich local equivalence of spectra
if and only if so is the induced map on homology
   $$\beta_*\alpha_*:\bb ZM_{fr}(X\times T^n\times (\A^1//\bb G_m))\to\bb ZM_{fr}(X\times T^{n}\times\bb G_m^{\wedge 1}\otimes {S}^1).$$
Here both linear spectra are defined by taking free Abelian groups of every entry of
$M_{fr}(X\times T^n\times (\A^1//\bb G_m))$ and $M_{fr}(X\times T^{n}\times\bb G_m^{\wedge 1}\otimes {S}^1)$
respectively. By~\cite[1.2]{GPN} the latter arrow is schemewise stably equivalent to the map
of linear framed motives
   $$\beta_*\alpha_*:LM_{fr}(X\times T^n\times (\A^1//\bb G_m))\to LM_{fr}(X\times T^{n}\times\bb G_m^{\wedge 1}\otimes {S}^1).$$
We see that the map of the theorem is a stable Nisnevich local equivalence of spectra
if and only if the morphism of complexes of Nisnevich sheaves
   $$\beta_*\alpha_*:C_*\bb ZF(-,X\times T^n\times (\A^1//\bb G_m))\to C_*\bb ZF(-,X\times T^{n}\times\bb G_m^{\wedge 1}\otimes {S}^1)$$
is a quasi-isomorphism.

Since $C_*\bb ZF(-,Y_1\sqcup Y_2)=C_*\bb ZF(-,Y_1)\oplus C_*\bb ZF(-,Y_2)$ for any $Y_1,Y_2\in\Delta^{\op}SmOp(Fr_0(k))$,
the digram~\eqref{ska-vs-dinamo} induces a commutative diagram of triangles of complexes of Nisnevich sheaves\footnotesize
   \begin{equation}\label{induceddiagr}
    \xymatrix{C_*\bb ZF(-,X\times T^n\times\bb G_m)\ar[r]\ar[d]&C_*\bb ZF(-,X\times T^n\times\bb A^1)\ar[r]\ar[d]&C_*\bb ZF(-,X\times T^n\times(\bb A^1//\bb G_m))\ar[d]^{\alpha}\ar[r]^(.8)+&{}\\
               C_*\bb ZF(-,X\times T^n\times\bb G_m^{\wedge 1})\ar[r]\ar@{=}[d]&C_*\bb ZF(-,X\times T^n\times\bb A^{\wedge 1})\ar[r]\ar[d]
               &C_*\bb ZF(-,X\times T^n\times(\bb A^{\wedge 1}//\bb G_m^{\wedge 1}))\ar[d]^\beta\ar[r]^(.8)+&{}\\
               C_*\bb ZF(-,X\times T^n\times\bb G_m^{\wedge 1})\ar[r]&0\ar[r]&C_*\bb ZF(-,X\times T^n\times\bb G_m^{\wedge 1}\otimes {S}^1)\ar[r]^(.8)+&{}}
   \end{equation}
\normalsize Firstly, we claim that $\alpha$ is a schemewise quasi-isomorphism of complexes of presheaves.
Indeed, we have a map of two triangles of complexes of presheaves\footnotesize
    $$\xymatrix{C_*\bb ZF(-,X\times T^n\times\bb G_m)\ar[r]\ar[d]&C_*\bb ZF(-,X\times T^n\times\bb G_m^{\wedge 1})\ar[r]\ar[d]
               &C_*\bb ZF(-,X\times T^n\times(pt\otimes S^1))\ar@{=}[d]\ar[r]^(.8)+&{}\\
               C_*\bb ZF(-,X\times T^n\times\bb A^1)\ar[r]&C_*\bb ZF(-,X\times T^n\times\bb A^{\wedge 1})\ar[r]
               &C_*\bb ZF(-,X\times T^n\times(pt\otimes S^1))\ar[r]^(.8)+&{}}$$
\normalsize We see that the left square of the diagram is Mayer--Vietoris, hence
$\alpha$ is a schemewise quasi-isomorphisms of complexes of presheaves.

Secondly, we claim that $\beta$ is a local quasi-isomorphism of complexes of Nisnevich sheaves.
This is equivalent to showing that the complex $C_*\bb ZF(-,X\times T^n\times\bb A^{\wedge 1})$
of diagram~\eqref{induceddiagr} is locally quasi-isomorphic to zero. To prove the latter,
consider a map of two triangles of complexes of sheaves\footnotesize
    $$\xymatrix{C_*\bb ZF(-,X\times T^n\times pt)\ar[r]\ar@{=}[d]&C_*\bb ZF(-,X\times T^n\times\bb A^{1})\ar[r]\ar@{=}[d]
               &C_*\bb ZF(-,X\times T^n\times\bb A^{\wedge 1})\ar[d]\ar[r]^(.8)+&{}\\
               C_*\bb ZF(-,X\times T^n\times pt)\ar[r]&C_*\bb ZF(-,X\times T^n\times\bb A^{1})\ar[r]
               &C_*\bb ZF(-,X\times T^n\times\bb A^1)/C_*\bb ZF(-,X\times T^n\times pt)\ar[r]^(.85)+&{}}$$
\normalsize The cohomology sheaves of the lower right complex are homotopy invariant and
quasi-stable framed presheaves. By~\cite[1.1]{GP4} these cohomology sheaves are strictly homotopy invariant. The
terms of this complex are contractible sheaves. Now the proof of~\cite[1.10.2]{SV1} yields
the local acyclicity of the complex. It follows that $C_*\bb ZF(-,X\times T^n\times\bb A^{\wedge 1})$
is locally acyclic, and hence $\beta$ is locally a quasi-isomorphism.

The proof that $\beta_*\{1/2\}\alpha_*\{1/2\}$ is a stable Nisnevich local weak
equivalence repeats the above proof word for word if we notice that homology of framed motives
with $1/2$-coefficients is computed as $LM_{fr}(Y)[1/2]$ (see Lemma~\ref{fraclin12} and Corollary~\ref{fraclincor12} as well).
This completes the proof of the theorem.
\end{proof}

In fact, the proof of Theorem~\ref{compframes} also shows the following fact.

\begin{cor}\label{a1comparison}
Suppose $\charr k\ne 2$ (respectively $k$ is of any characteristic). Then for every $n\geq 0$ and $X\in Sm/k$, the natural maps
$M_{fr}(X\times T^n\times\A^1)\to M_{fr}(X\times T^n)$ and $LM_{fr}(X\times T^n\times\A^1)\to LM_{fr}(X\times T^n)$
(respectively $M_{fr}(X\times T^n\times\A^1)\{1/2\}\to M_{fr}(X\times T^n)\{1/2\}$ and $
LM_{fr}(X\times T^n\times\A^1)\{1/2\}\to LM_{fr}(X\times T^n)\{1/2\}$) are
stable local weak equivalences of $S^1$-spectra.
\end{cor}

Let us take the $n$th power $(\beta\alpha)^{\wedge n}:(\A^1//\bb G_m)^{\wedge n}\to (\bb G_m^{\wedge 1}\otimes {S}^1)^{\wedge n}$
of the morphism $\beta\alpha$ in the symmetric monoidal category $\Delta^{\op}SmOp(Fr_0(k))$. Below we shall
also need the following

\begin{cor}\label{potom1}
Suppose $\charr k\ne 2$ (respectively $k$ is of any characteristic). For every $n\geq 1$ and $X\in Sm/k$, the map
$(\beta\alpha)^{\wedge n}_*:C_*Fr(-,X\times(\A^1//\bb G_m)^{\wedge n})\to C_*Fr(-,X\times(\bb G_m^{\wedge 1}\otimes {S}^1)^{\wedge n})$
(respectively $(\beta\alpha)^{\wedge n}_*:C_*Fr(-,X\times(\A^1//\bb G_m)^{\wedge n})\{1/2\}
\to C_*Fr(-,X\times(\bb G_m^{\wedge 1}\otimes {S}^1)^{\wedge n})\{1/2\}$)
is a local Nisnevich weak equivalence of motivic spaces.
\end{cor}

\begin{proof}
Suppose $\charr k\ne 2$. The space $C_*Fr(-,X\times(\bb G_m^{\wedge 1}\otimes {S}^1)^{\wedge n})$
is plainly sectionwise connected. By~\cite[A.1]{GPN}
the space $C_*Fr(-,X\times(\A^1//\bb G_m)^{\wedge n})$ is locally connected.
Therefore $M_{fr}(X\times(\bb G_m^{\wedge 1}\otimes {S}^1)^{\wedge n})$
is sectionwise an $\Omega$-spectrum and $M_{fr}(X\times(\A^1//\bb G_m)^{\wedge n})$
is locally an $\Omega$-spectrum by Theorem~\ref{addconseq}. Therefore our assertion would
follow if we showed that the map $(\beta\alpha)^{\wedge n}_*:
M_{fr}(X\times(\A^1//\bb G_m)^{\wedge n})\to M_{fr}(X\times(\bb G_m^{\wedge 1}\otimes {S}^1)^{\wedge n})$
is a local Nisnevich weak equivalence of $S^1$-spectra. The latter follows by using induction in $n$, Theorem~\ref{compframes}
and the fact that the realization of Nisnevich local weak equivalences is a local Nisnevich weak equivalence.
If $k$ is of any characteristic then our proof is similar to the proof above.
\end{proof}

We finish the section by proving the following useful

\begin{thm}\label{mvproperty}
Suppose $\charr k\ne 2$ (respectively $k$ is of any characteristic).
For every $n\geq 0$ and every elementary Nisnevich square
   $$\xymatrix{U'\ar[r]\ar[d]&X'\ar[d]\\
                       U\ar[r]&X}$$
the square of $S^1$-spectra
   $$\xymatrix{M_{fr}(U'\times T^n)\ar[r]\ar[d]&M_{fr}(X'\times T^n)\ar[d]\\
                       M_{fr}(U\times T^n)\ar[r]&M_{fr}(X\times T^n)}$$
(repectively the square
   $$\xymatrix{M_{fr}(U'\times T^n)\{1/2\}\ar[r]\ar[d]&M_{fr}(X'\times T^n)\{1/2\}\ar[d]\\
                        M_{fr}(U\times T^n)\{1/2\}\ar[r]&M_{fr}(X\times T^n)\{1/2\}}$$
of framed motives with $1/2$-coefficients)
is homotopy cartesian locally in the Nisnevich topology. The same is also true for linear framed motives
and linear framed motives with $1/2$-coefficients.
\end{thm}

\begin{proof}
Suppose $\charr k\ne 2$. Since we deal with connected $S^1$-spectra, the proof of Theorem~\ref{compframes}
shows that our statement suffuces to verify for linear framed motives.
It follows from the proof of~\cite[4.4]{Voe2} that the sequence of presheaves
   $$0\to\bb ZF_s(-,U'\times T^n)\to\bb ZF_s(-,U\times T^n)\oplus\bb ZF_s(-,X'\times T^n)\to\bb ZF_s(-,X\times T^n)\to 0$$
is locally exact for every $s\geq 0$. Passing to the colimit over $s$, the sequence of presheaves
   $$0\to\bb ZF(-,U'\times T^n)\to\bb ZF(-,U\times T^n)\oplus\bb ZF(-,X'\times T^n)\to\bb ZF(-,X\times T^n)\to 0$$
is locally exact as well. It follows that the sequence of Eilenberg--Mac~Lane spectra
   $$EM(\bb ZF(-,U'\times T^n))\to EM(\bb ZF(-,U\times T^n))\times EM(\bb ZF(-,X'\times T^n))\to EM(\bb ZF(-,X\times T^n))$$
is locally a homotopy fibre sequence. It follows that the sequence
   \begin{equation}\label{yui}
    LM_{fr}(U'\times T^n)\to LM_{fr}(U\times T^n)\times LM_{fr}(X'\times T^n)\to LM_{fr}(X\times T^n)
   \end{equation}
is a homotopy fibre sequence in the motivic model structure of $S^1$-spectra, and hence so is the sequence
   $$LM_{fr}(U'\times T^n)_f\to LM_{fr}(U\times T^n)_f\times LM_{fr}(X'\times T^n)_f\to LM_{fr}(X\times T^n)_f,$$
where ``$f$" as in Corollary~\ref{cor:A1_fibrant}. It follows from Corollary~\ref{cor:A1_fibrant} that the latter sequence is
a sequence of fibrant objects in the stable injective motivic model structure of $S^1$-spectra.
Therefore this sequence is also locally a homotopy fibre sequence, and hence so is sequence~\eqref{yui}.
If $k$ is of any characteristic then our proof is similar to the proof above (recall that homology of
framed motives with $1/2$-coefficients is computed as linear framed motives tensored with $\bb Z[1/2]$).
\end{proof}

\section{Proof of Theorem~\ref{Segal_Thm_II}}\label{dokazatelstvo}

In this section we prove Theorem~\ref{Segal_Thm_II}. We first give a
proof for the second statement of the theorem followed by the first
statement.

\subsection{Proof of Theorem~\ref{Segal_Thm_II}(2)}

Recall that $A\in sShv_\bullet(Sm/k)$ is {\it finitely
presentable\/} if the functor $\Hom_{sShv_\bullet(Sm/k)}(A,-)$
preserves directed colimits. Using the General Framework on
p.~\pageref{genfr}, for every $A,L\in sShv_\bullet(Sm/k)$ with $A$
finitely presentable there are canonical morphisms in
$sShv_\bullet(Sm/k)$
   $$C_*\cc Fr(L)\to\underline{\Hom}(A,C_*\cc Fr(L\wedge A)),\quad
       C_*\cc Fr(L)\{1/2\}\to\underline{\Hom}(A,C_*\cc Fr(L\wedge A)\{1/2\})$$
as well as canonical morphisms of ordinary $S^1$-spectra of simplicial
Nisnevich sheaves
   \begin{equation}\label{smashell}
    \cc M_{fr}(L)\to\underline{\Hom}(A,\cc M_{fr}(L\wedge A)),\quad\wt{\cc M}_{fr}(L)\{1/2\}\to\underline{\Hom}(A,\wt{\cc M}_{fr}(L\wedge A)\{1/2\}).
   \end{equation}
They induce the corresponding morphisms of spaces and $S^1$-spectra respectively\footnotesize
   $$a_A:C_*\cc Fr(L)_f\to\underline{\Hom}(A,C_*\cc Fr(L\wedge A)_f),\quad
       a_A\{1/2\}:C_*\cc Fr(L)\{1/2\}_f\to\underline{\Hom}(A,C_*\cc Fr(L\wedge A)\{1/2\}_f)$$
\normalsize and\footnotesize
   $$\alpha_A:\cc M_{fr}(L)_f\to\underline{\Hom}(A,\cc M_{fr}(L\wedge A)_f),\quad
        \alpha_A\{1/2\}:\wt{\cc M}_{fr}(L)\{1/2\}_f\to\underline{\Hom}(A,\wt{\cc M}_{fr}(L\wedge A)\{1/2\}_f)$$
\normalsize where $C_*\cc Fr(L)_f,C_*\cc Fr(L)\{1/2\}_f$ (respectively $\cc M_{fr}(L)_f,\wt{\cc M}_{fr}(L)\{1/2\}_f$) are Nisnevich local
fibrant replacements of $C_*\cc Fr(L),C_*\cc Fr(L)\{1/2\}$ (respectively a level Nisnevich local
fibrant replacement of $\cc M_{fr}(L),\wt{\cc M}_{fr}(L)\{1/2\}$ in the category of ordinary $S^1$-spectra).

\begin{lem}\label{chempion}
Suppose $u:A\to B$ is a motivic weak equivalence in
$sShv_\bullet(Sm/k)$ between finitely presentable objects such that
the induced map $u_*:\cc M_{fr}(L\wedge A)\to\cc M_{fr}(L\wedge B)$
is a stable Nisnevich local weak equivalence of spectra. Suppose
$\cc M_{fr}(L)_f,\cc M_{fr}(L\wedge A)_f,\cc M_{fr}(L\wedge B)_f$
are all motivically fibrant $S^1$-spectra. Then $\alpha_A:\cc
M_{fr}(L)_f\to\underline{\Hom}(A,\cc M_{fr}(L\wedge A)_f)$ is a
sectionwise stable equivalence if and only if so is $\alpha_B:\cc
M_{fr}(L)_f\to\underline{\Hom}(B,\cc M_{fr}(L\wedge B)_f)$.

Suppose $u:A\to B$ is a motivic weak equivalence in $sShv_\bullet(Sm/k)$ between
finitely presentable objects such that
$u\{1/2\}_*:\wt{\cc M}_{fr}(L\wedge A)\{1/2\}\to\wt{\cc M}_{fr}(L\wedge B)\{1/2\}$ is a stable Nisnevich local
weak equivalence of spectra. Suppose
$\wt{\cc M}_{fr}(L)\{1/2\}_f,\wt{\cc M}_{fr}(L\wedge A)\{1/2\}_f,\wt{\cc M}_{fr}(L\wedge B)\{1/2\}_f$
are all motivically fibrant $S^1$-spectra. Then
$\alpha\{1/2\}_A:\wt{\cc M}_{fr}(L)\{1/2\}_f\to\underline{\Hom}(A,\wt{\cc M}_{fr}(L\wedge A)\{1/2\}_f)$
is a sectionwise stable equivalence if and only if so is
$\alpha\{1/2\}_B:\wt{\cc M}_{fr}(L)\{1/2\}_f\to\underline{\Hom}(B,\wt{\cc M}_{fr}(L\wedge B)\{1/2\}_f)$.
\end{lem}

\begin{proof}
Commutative square~\eqref{actionV} of the General Framework gives
rise to a commutative square
   $$\xymatrix{\cc M_{fr}(L)_f\ar[r]^{\alpha_A}\ar[d]_{\alpha_B}&\underline{\Hom}(A,\cc M_{fr}(L\wedge A)_f)\ar[d]^{u_*}\\
               \underline{\Hom}(B,\cc M_{fr}(L\wedge B)_f)\ar[r]^{u^*}&\underline{\Hom}(A,\cc M_{fr}(L\wedge B)_f)}$$
By assumption, $\cc M_{fr}(L\wedge A)_f,\cc M_{fr}(L\wedge B)_f$ are
motivically fibrant $S^1$-spectra, hence $u^*$ is sectionwise stable
equivalence. Since $\cc M_{fr}(L\wedge A)\to\cc M_{fr}(L\wedge B)$
is a stable Nisnevich local weak equivalence of spectra, it follows
that $\cc M_{fr}(L\wedge A)_f\to\cc M_{fr}(L\wedge B)_f$ is a
sectionwise stable weak equivalence of spectra. We see that the
right vertical arrow $u_*$ of the square is a sectionwise stable
weak equivalence of spectra. Our statement now follows from the
two-out-three property for weak equivalences (the statement for
$\alpha\{1/2\}_A,\alpha\{1/2\}_B$ repeats the above arguments word
for word).
\end{proof}

\begin{cor}\label{corchempion}
$(1)$ Under the assumptions of Lemma~\ref{chempion} the map of
spaces $a_A:C_*\cc Fr(L)_f\to\underline{\Hom}(A,C_*\cc Fr(L\wedge
A)_f)$ is a sectionwise weak equivalence if and only if so is
$a_B:C_*\cc Fr(L)_f\to\underline{\Hom}(B,C_*\cc Fr(L\wedge B)_f)$.

$(2)$ Under the assumptions of Lemma~\ref{chempion} the map of
spaces $a\{1/2\}_A:C_*\cc Fr(L)\{1/2\}_f\to\underline{\Hom}(A,C_*\cc
Fr(L\wedge A)\{1/2\}_f)$ is a sectionwise weak equivalence if and
only if so is $a\{1/2\}_B:C_*\cc
Fr(L)\{1/2\}_f\to\underline{\Hom}(B,C_*\cc Fr(L\wedge B)\{1/2\}_f)$.
\end{cor}

\begin{lem}\label{varlamov}
$(1)$ Suppose $u:A\to B$ is a motivic weak equivalence in
$sShv_\bullet(Sm/k)$ between finitely presentable objects. Suppose
$\cc M_{fr}(L)_f,\cc M_{fr}(L\wedge B)_f$ are motivically fibrant
$S^1$-spectra. Then the composite map of spaces $C_*\cc
Fr(L)_f\xrightarrow{a_B}\underline{\Hom}(B,C_*\cc Fr(L\wedge
B)_f)\xrightarrow{u^*} \underline{\Hom}(A,C_*\cc Fr(L\wedge B)_f)$
is a sectionwise weak equivalence if and only if $\alpha_B:\cc
M_{fr}(L)_f\to\underline{\Hom}(B,\cc M_{fr}(L\wedge B)_f)$ is a
sectionwise stable equivalence of spectra.

$(2)$ Suppose $u:A\to B$ is a motivic weak equivalence in
$sShv_\bullet(Sm/k)$ between finitely presentable objects. Suppose
$\wt{\cc M}_{fr}(L)\{1/2\}_f,\wt{\cc M}_{fr}(L\wedge B)\{1/2\}_f$
are motivically fibrant $S^1$-spectra. Then the composite map of
spaces
$$C_*\cc Fr(L)\{1/2\}_f\xrightarrow{a\{1/2\}_B}\underline{\Hom}(B,C_*\cc Fr(L\wedge B)\{1/2\}_f)\xrightarrow{u^*}
\underline{\Hom}(A,C_*\cc Fr(L\wedge B)\{1/2\}_f)$$
is a sectionwise weak equivalence
if and only if
$$\alpha\{1/2\}_B:\wt{\cc M}_{fr}(L)\{1/2\}_f\to\underline{\Hom}(B,\wt{\cc M}_{fr}(L\wedge B)\{1/2\}_f)$$
is a sectionwise stable equivalence of spectra.
\end{lem}

\begin{proof}
Our assumptions on spectra imply $C_*\cc Fr(L)_f,C_*\cc Fr(L\wedge B)_f$ are
motivically fibrant spa\-ces. It follows that
$C_*\cc Fr(L)_f\xrightarrow{a_B}\underline{\Hom}(B,C_*\cc Fr(L\wedge B)_f)\xrightarrow{u^*}
\underline{\Hom}(A,C_*\cc Fr(L\wedge B)_f)$ is a sectionwise weak equivalence if
and only if so is $C_*\cc Fr(L)_f\xrightarrow{a_B}\underline{\Hom}(B,C_*\cc Fr(L\wedge B)_f)$,
because $u:A\to B$ is a motivic weak equivalence (recall that all spaces
in the Morel--Voevodsy's model category $sShv_\bullet(Sm/k)_{\mot}$ are cofibrant).

Again because of our assumptions on spectra we have that
$C_*\cc Fr(L)_f\xrightarrow{a_B}\underline{\Hom}(B,C_*\cc Fr(L\wedge B)_f)$
is a sectionwise weak equivalence
if and only if $\alpha_B:\cc M_{fr}(L)_f\to\underline{\Hom}(B,\cc M_{fr}(L\wedge B)_f)$
is a sectionwise stable equivalence of spectra. The corresponding proof for maps
$a\{1/2\}_B$, $\alpha\{1/2\}_B$ is similar to that above.
\end{proof}

We are now in a position to prove the second statement of
Theorem~\ref{Segal_Thm_II}.

\begin{proof}[Proof of Theorem~\ref{Segal_Thm_II}(2)]
By Corollary~\ref{cor:A1_fibrant_3} for any integer $n> 0$,
the $S^1$-spectrum $M_{fr}(X\times T^n)_f$ is motivically fibrant
and the motivic space $C_*Fr(X\times T^n)_f$ is motivically fibrant.
Let $u:\bb P^{\wedge 1}=(\bb P^1,\infty)\to T$ be the canonical motivic weak equivalence
in $sShv_\bullet(Sm/k)$. It is also given by the framed correspondence of level one
$(\{0\},\bb A^1,t)\in Fr_1(pt,pt)$. By Lemma~\ref{varlamov} the map
   $$C_*Fr(X\times T^n)_f\to\underline{\Hom}(\bb P^{\wedge 1},C_*Fr(X\times T^{n+1})_f)$$
is a sectionwise weak equivalence if and only if
$\alpha_T:M_{fr}(X\times T^n)_f\to\underline{\Hom}(T,M_{fr}(X\times T^{n+1})_f)$
is a sectionwise stable equivalence of spectra.

Consider the zigzag of motivic weak equivalences
   $$T\xleftarrow{\sim}(-,\A^1//\bb G_m)_+\xrightarrow{\sim}(-,\bb G_m^{\wedge 1}\otimes { S}^1)_+,$$
where the right arrow is induced by $\beta\alpha$ of diagram~\eqref{ska-vs-dinamo}.
By Corollary~\ref{cor:A1_fibrant_3} for any integer $n\geq 0$, the $S^1$-spectra $M_{fr}(X\times
T^n\times \bb G_m^{\wedge 1}\otimes { S}^1)_f,M_{fr}(X\times
T^n\times (\A^1//\bb G_m))_f$ are motivically fibrant and
$C_*(Fr(X\times T^n\times \bb G_m^{\wedge 1}\otimes {S}^1))_f$,
$C_*Fr(X\times T^n\times (\A^1//\bb G_m))_f$ are motivically fibrant spaces.

By~\cite[8.1]{GPN} $M_{fr}(X\times T^n\times (\A^1//\bb G_m))\to
M_{fr}(X\times T^{n+1})$ is a stable Nisnevich local weak
equivalence of spectra. By Theorem~\ref{compframes} $M_{fr}(X\times
T^n\times (\A^1//\bb G_m))\to M_{fr}(X\times T^n\times \bb
G_m^{\wedge 1}\otimes {S}^1)$ is a stable Nisnevich local weak
equivalence of spectra. By Lemma~\ref{chempion}
$\alpha_T:M_{fr}(X\times T^n)_f\to\underline{\Hom}(T,M_{fr}(X\times T^{n+1})_f)$ is
a sectionwise stable equivalence of spectra if and only if so is the
map of spectra $\alpha_{\bb G_m^{\wedge 1}\otimes
{S}^1}:M_{fr}(X\times T^n)_f\to \underline{\Hom}((\bb G_m^{\wedge 1}\otimes
{S}^1)_+,M_{fr}(X\times T^{n}\times\bb G_m^{\wedge 1}\otimes {S}^1)_f)$.

Consider a commutative diagram
   $$\xymatrix{M_{fr}(X\times(\bb A^1//\bb G_m)^{\wedge n})_f\ar[r]^(.33){\alpha_{\bb G_m^{\wedge 1}\otimes {S}^1}}\ar[d]
                       &\underline{\Hom}((\bb G_m^{\wedge 1}\otimes {S}^1)_+,M_{fr}(X\times(\bb A^1//\bb G_m)^{\wedge n}\times\bb G_m^{\wedge 1}\otimes {S}^1)_f)\ar[d]\\
                       M_{fr}(X\times T^n)_f\ar[r]^(.3){\alpha_{\bb G_m^{\wedge 1}\otimes {S}^1}}&
                       \underline{\Hom}((\bb G_m^{\wedge 1}\otimes {S}^1)_+,M_{fr}(X\times T^{n}\times\bb G_m^{\wedge 1}\otimes {S}^1)_f)}$$
with $M_{fr}(X\times(\bb A^1//\bb G_m)^{n})_f$ a stable Nisnevich
local fibrant replacement of ordinary spectra and $(\bb A^1//\bb
G_m)^{\wedge n}$ from Notation~\ref{edem}. It follows from~\cite[1.1]{GPN}
that the left vertical arrow is a sectionwise stable weak
equivalence of spectra, hence so is the right vertical arrow. We see
that the lower arrow is a sectionwise stable weak equivalence of
spectra if and only if so is the upper one. But the upper arrow is a
sectionwise stable weak equivalence of spectra by the Cancellation
Theorem for framed motives of algebraic
varieties~\cite[Theorem~A]{AGP} and Theorem~\ref{addconseq}. The
proof for spaces $C_*Fr(X\times T^n)\{1/2\}_f$ repeats the above
proof word for word.
\end{proof}

The proof of Theorem~\ref{Segal_Thm_II}(2) and
Corollary~\ref{potom1} also implies the following

\begin{cor}\label{potom2}
For any $n\geq 1$, the map $C_*Fr(-,X\times(\A^1//\bb G_m)^{\wedge n})\to
C_*Fr(-,X\times T^n)$ (respectively $C_*Fr(-,X\times(\A^1//\bb
G_m)^{\wedge n})\{1/2\}\to C_*Fr(-,X\times T^n)\{1/2\}$) is a local Nisnevich
weak equivalence of motivic spaces whenever $\charr k\ne 2$
(respectively $k$ is of any characteristic).
\end{cor}

\subsection{Proof of Theorem~\ref{Segal_Thm_II}(1)}\label{P1_suspension_spectra}

In this section we finish the proof of Theorem~\ref{Segal_Thm_II}.
It remains to show part~(1) of the theorem. Denote by $\Spt^{\bb
P^1}(Sm/k)$ the category of $\bb P^1$-spectra with $\bb P^1$ pointed
at $\infty$. We shall work with the injective stable motivic model
structure on $\Spt^{\bb P^1}(Sm/k)$ (see~\cite{Jar} for details).
The weak equivalences in this model category will be referred to as
{\it stable equivalences}.

We define the {\it fake suspension functor\/} $\Sigma^\ell_{\bb
P^1}:\Spt^{\bb P^1}(Sm/k)\to\Spt^{\bb P^1}(Sm/k)$ by
$(\Sigma^\ell_{\bb P^1}\cc Z)_n=\cc Z_n\wedge\bb P^1$ and structure
maps
   $$(\Sigma^\ell_{\bb P^1}\cc Z_n)\wedge\bb P^1\xrightarrow{\sigma_n\wedge\bb P^1}\Sigma^\ell_{\bb P^1}\cc Z_{n+1},$$
where $\sigma_n$ is a structure map of $\cc Z$. The fake suspension
functor is left adjoint to the {\it fake loops functor\/}
$\Omega^\ell_{\bb P^1}:\Spt^{\bb P^1}(Sm/k)\to\Spt^{\bb P^1}(Sm/k)$
defined by $(\Omega^\ell_{\bb P^1}\cc Z)_n=\Omega_{\bb P^1}\cc
Z_n=\underline{\Hom}({\bb P^1},\cc Z_n)$ and structure maps adjoint
to
   $$\Omega_{\bb P^1}\cc Z_n\xrightarrow{\Omega_{\bb P^1}\wt\sigma_n}\Omega_{\bb P^1}(\Omega_{\bb P^1}\cc Z_{n+1}),$$
where $\wt\sigma_n$ is adjoint to the structure map $\sigma_n$ of
$\cc Z$.

Define the \emph{shift functors} $t:\Spt^{\bb
P^1}(Sm/k)\xrightarrow{}\Spt^{\bb P^1}(Sm/k)$ and $s:\Spt^{\bb
P^1}(Sm/k)\xrightarrow{}\Spt^{\bb P^1}(Sm/k)$ by $(s\cc Z)_{n}=\cc
Z_{n+1}$ and $(t\cc Z)_{n}=\cc Z_{n-1}$, $(t\cc Z)_{0}=pt$, with the
evident structure maps. Note that $t$ is left adjoint to $s$.

Define $\Theta:\Spt^{\bb P^1}(Sm/k)\to\Spt^{\bb P^1}(Sm/k)$ to be
the functor $s\circ\Omega^\ell_{\bb P^1}$, where $s$ is the shift
functor. Then we have a natural map $\iota_{\cc Z}:\cc Z\to\Theta\cc
Z$, and we define
   \begin{equation}\label{theta}
    \Theta^{\infty}\cc Z=\colim(\cc Z\xrightarrow{\iota_{\cc Z}}\Theta\cc Z\xrightarrow{\Theta\iota_{\cc
     Z}}\Theta^{2}\cc Z\xrightarrow{\Theta^{2}\iota_{\cc Z}}\cdots
     \xrightarrow{\Theta^{n-1}\iota_{\cc Z}}\Theta^{n}\cc
     Z\xrightarrow{\Theta^{n}\iota_{\cc Z}}\cdots).
   \end{equation}
Set $\eta_{\cc Z}:\cc Z\to\Theta^{\infty}\cc Z$ to be the obvious
natural transformation.

\begin{lem}\label{spektrell}
For every $\bb P^1$-spectrum $\cc L$ the natural map $\eta_{\cc Z}:\cc Z\to\Theta^{\infty}\cc Z$ is a
stable motivic weak equivalence.
\end{lem}

\begin{proof}
The assertion would follow from~\cite[4.11]{H} as soon as we find a weakly finitely generated
model structure on pointed simplicial presheaves $sPre_\bullet(Sm/k)$  in the
sense of~\cite{DRO} such that its model category
of $\bb P^1$-spectra is Quillen equivalent to the injective stable
motivic model structure of Jardine~\cite{Jar}. Such a model structure on $sPre_\bullet(Sm/k)$ is the flasque
motivic model structure of Isaksen~\cite{Is}. The fact that it is weakly finitely generated
follows from~\cite[3.10, 4.9, 5.1]{Is} and~\cite[2.2]{GP}.
\end{proof}

We are now in a position to prove Theorem~\ref{Segal_Thm_II}(1).

\begin{proof}[Proof of Theorem~\ref{Segal_Thm_II}(1)]
We shall construct the stable equivalence of the theorem not only for smooth algebraic
varieties but also for all pointed motivic spaces. This is of independent interest. Let $\cc X\in sPre_\bullet(Sm/k)$ be a pointed motivic space.
Consider its suspension spectrum
   $$\Sigma^\infty_{\bb P^1}\cc X=(\cc X,\cc X\wedge\bb P^1,\cc X\wedge\bb P^{\wedge 2},\ldots).$$
We set
   $$\Sigma^\infty_{\bb P^1,T}\cc X=(\cc X,\cc X\wedge T,\cc X\wedge T^2,\ldots)$$
be the $\bb P^1$-spectrum with structure maps defined by $(\cc
X\wedge T^n)\wedge\bb P^1\xrightarrow{\id\wedge\sigma}\cc X\wedge
T^{n+1}$, where $\sigma:\bb P^1\to T$ is the canonical motivic
equivalence of sheaves.

Since smash product of a motivic weak equivalence and a motivic
space is again a motivic weak equivalence, we get a level motivic
equivalence of spectra
   $$\sigma:\Sigma^\infty_{\bb P^1}\cc X\to\Sigma^\infty_{\bb P^1,T}\cc X.$$

Let us take the Nisnevich sheaf $(\cc X\wedge T^n)_{\nis}$ of each
space $\cc X\wedge T^n$ of $\Sigma^\infty_{\bb P^1,T}\cc X$. Then we
get a $\bb P^1$-spectrum $\Sigma^\infty_{\bb P^1,T}\cc X_{\nis}$ and
a level local weak equivalence of spectra
   $$\nu:\Sigma^\infty_{\bb P^1,T}\cc X\to\Sigma^\infty_{\bb P^1,T}\cc X_{\nis}.$$
By Lemma~\ref{spektrell} the natural map of spectra
   $$\eta:\Sigma^\infty_{\bb P^1,T}\cc X_{\nis}\to\Theta^\infty\Sigma^\infty_{\bb P^1,T}\cc X_{\nis}$$
is a stable equivalence.

Evaluation of the spectrum $\Sigma^\infty_{\bb P^1,T}\cc X_{\nis}$
at a smooth scheme $U$ has entries
   $$(\colim_n(\Hom_{Shv_{\bullet}(Sm/k)}(U_+\wedge\bb P^{\wedge n},\cc X_{\nis}\wedge T^n)),
     \colim_n(\Hom_{Shv_{\bullet}(Sm/k)}(U_+\wedge\bb P^{\wedge n},\cc X_{\nis}\wedge T^{n+1})),\ldots),$$
where each smash product $\cc X_{\nis}\wedge T^n$ is taken in the
category of pointed simplicial sheaves.

So the spectrum $\Theta^\infty\Sigma^\infty_{\bb P^1,T}\cc X_{\nis}$ can be written as
   $$(\cc Fr(-,\cc X_{\nis}),\cc Fr(-,\cc X_{\nis}\wedge T),
      \cc Fr(-,\cc X_{\nis}\wedge T^2),\ldots).$$
If we apply the functor $C_*$ to the spectrum
$\Theta^\infty\Sigma^\infty_{\bb P^1,T}\cc X_{\nis}$ levelwise,
we get a spectrum
   $$(C_*(\cc Fr(-,\cc X_{\nis})),
      C_*(\cc Fr(-,\cc X_{\nis}\wedge T)),C_*(\cc Fr(-,\cc X_{\nis}\wedge T^2)),\ldots).$$
By~\cite[2.3.8]{MV} the natural map of spectra
   $$\delta:\Theta^\infty\Sigma^\infty_{\bb P^1,T}\cc X_{\nis}\to C_*(\Theta^\infty\Sigma^\infty_{\bb P^1,T}\cc X_{\nis})$$
is a level motivic weak equivalence.

Taking a Nisnevich local fibrant replacement $C_*(\cc Fr(-,\cc X_{\nis}\wedge T^n))_f$ of every
motivic space $C_*(\cc Fr(-,\cc X_{\nis}\wedge T^n))$, we get a spectrum
   $$C_*(\Theta^\infty\Sigma^\infty_{\bb P^1,T}\cc X_{\nis})_f:=
     (C_*(\cc Fr(-,\cc X_{\nis}))_f,C_*(\cc Fr(-,\cc X_{\nis}\wedge T))_f,C_*(\cc Fr(-,\cc X_{\nis}\wedge T^2))_f,\ldots).$$
Denote by
   $$\rho:=\delta\circ\eta\circ\nu\circ\sigma:\Sigma^\infty_{\bb P^1}(\cc X)\to C_*(\Theta^\infty\Sigma^\infty_{\bb P^1,T}\cc X_{\nis})_f.$$
Then $\rho$ is a stable motivic weak equivalence of $\bb P^1$-spectra. Moreover, if $\cc X$
is represented by a smooth scheme $X$ (or, more generally, by a directed colimit of
simplicial smooth schemes from $\Delta^{\op}Fr_0(k)$), then
$C_*(\Theta^\infty\Sigma^\infty_{\bb P^1,T}\cc X_{\nis})_f$ is naturally isomorphic in $SH(k)$ to the
spectrum $M_{\bb P^{\wedge 1}}(\cc X)_f$ such that the composite isomorphism
   $$\Sigma^\infty_{\bb P^1}(\cc X)\xrightarrow\rho C_*(\Theta^\infty\Sigma^\infty_{\bb P^1,T}\cc X_{\nis})_f
     \xrightarrow\cong M_{\bb P^{\wedge 1}}(\cc X)_f$$
equals $\kappa_f$ of the theorem.

Moreover, the canonical morphism
   $$ M_{\bb P^{\wedge 1}}(\cc X)_f\to M_{\bb P^{\wedge 1}}(\cc X)\{1/2\}_f$$
is a $2^{-1}$-stable motivic equivalence of $\bb P^1$-spectra by
Proposition~\ref{frac12}. This finishes the proof of part (1) of
Theorem~\ref{Segal_Thm_II}.
\end{proof}

\section{Computing infinite $\bb P^1$-loop spaces}\label{infloop}

Let $\bl{\longrightarrow}{\Delta^{\op}Fr_0(k)}$ be the full
subcategory of $sShv_{\bullet}(Sm/k)$ consisting of directed
colimits of objects from $\Delta^{\op}Fr_0(k)$. Recall that
$\Delta^{\op}Fr_0(k)$ can be regarded as a full subcategory of
$sShv_{\bullet}(Sm/k)$ sending $X\in Fr_0(k)$ to $X_+\in
Shv_{\bullet}(Sm/k)$.

In order to compute $\Omega^{\infty}_{\bb P^1}\Sigma^{\infty}_{\bb
P^1}(\cc F)$ of any motivic space $\cc F\in sShv_{\bullet}(Sm/k)$
(see Theorem~\ref{infloopspaces}), we need  the following extension
of Theorem~\ref{Segal_Thm_II} to objects of
$\bl{\longrightarrow}{\Delta^{\op}Fr_0(k)}$:

\begin{thm}\label{Segal_Thm_Simpl}
Let $k$ be an infinite perfect field and $Y$ an object of
$\bl{\longrightarrow}{\Delta^{\op}Fr_0(k)}$. Then the following
statements are true:
\begin{enumerate}
\item The morphism $\kappa_f: \Sigma^{\infty}_{\bb P^1}Y_+\to M_{\bb P^{\wedge 1}}(Y)_f$
is a stable motivic equivalence of $\bb P^1$-spectra and the
morphism $\tau_f\circ\kappa_f:\Sigma^{\infty}_{\bb P^1}Y_+\to M_{\bb
P^{\wedge 1}}(Y)\{1/2\}_f$ is a $2^{-1}$-stable motivic equivalence
of $\bb P^1$-spectra.
\item If $\charr k\ne 2$ (respectively $k$ is of any characteristic) then the
$\bb P^1$-spectrum $M_{\bb P^{\wedge 1}}(Y)_f$ (respectively $M_{\bb
P^{\wedge 1}}(Y)\{1/2\}_f$) is a motivically fibrant
$\Omega$-spectrum in posisitive degrees. This means that for every
positive integer $n>0$ each motivic space $C_*(Fr(-,Y\times
T^{n}))_f$ (respectively $C_*(Fr(-,Y\times T^{n}))\{1/2\}_f$) is
motivically fibrant in the Morel--Voevod\-sky~\cite{MV} motivic
model category of simplicial Nisnevich sheaves and the structure map
   $$C_*(Fr(-,Y\times T^{n}))_f\to \Omega_{\bb P^1}(C_*(Fr(-,Y\times T^{n+1}))_f)$$
(respectively  $C_*(Fr(-,Y\times T^{n}))\{1/2\}_f\to \Omega_{\bb
P^1}(C_*(Fr(-,Y\times T^{n+1}))\{1/2\}_f)$) is a  weak equivalence
scheme\-wise.
\end{enumerate}
\end{thm}

\begin{proof}
The first statement of the theorem can be proved similar to
Theorem~\ref{Segal_Thm_II}(1) for any
$Y\in\bl{\longrightarrow}{\Delta^{\op}Fr_0(k)}$. Without loss of
generality it is enough to prove the second statement of the theorem
for $Y\in\Delta^{\op}Fr_0(k)$. Indeed, we use the facts that the
functor $M_{\bb P^{\wedge1}}(-)$ respects directed colimits and
directed colimits of locally fibrant objects are Nisnevich excisive
(even more: they are fibrant in the local flasque model structure of
sheaves in the sense of~\cite[4.6]{Is}).

We first observe that each space $C_*(Fr(-,Y\times T^{n}))$, $n>0$,
is locally connected, because it is the geometric realization of a
simplicial locally connected $H$-space $[k\in\Delta^{\op}\mapsto
C_*(Fr(-,Y_k\times T^{n}))]$ and $\pi_0^{\nis}(C_*(Fr(-,Y\times
T^{n})))=0$ by~\cite[7.1]{Gr}. Now Corollary~\ref{cor:A1_fibrant_3}
is true if we replace $X$ by $Y$ in it. Indeed, its proof relies on
connectedness of the corresponding spaces which we have just
verified and on Corollary~\ref{cor:A1_fibrant_2}. As a result, for
every positive integer $n>0$ each motivic space $C_*(Fr(-,Y\times
T^{n}))_f$ (respectively $C_*(Fr(-,Y\times T^{n}))\{1/2\}_f$) is
motivically fibrant in the Morel--Voevod\-sky~\cite{MV} motivic
model category of simplicial Nisnevich sheaves.

In order to show that the structure map
   $$C_*(Fr(-,Y\times T^{n}))_f\to \Omega_{\bb P^1}(C_*(Fr(-,Y\times T^{n+1}))_f),\quad n>0$$
(respectively  $C_*(Fr(-,Y\times T^{n}))\{1/2\}_f\to \Omega_{\bb
P^1}(C_*(Fr(-,Y\times T^{n+1}))\{1/2\}_f)$) is a  weak equivalence
scheme\-wise, we use Corollary~\ref{cor:A1_fibrant_3} (replacing $X$
by $Y$ in it) and the proof of Theorem~\ref{Segal_Thm_II}(2) (in
there we also use the fact that the geometric realization of a
simplicial stable local equivalence of $S^1$-spectra is a stable
local equivalence) to say that this is equivalent to showing that
the map
   $$M_{fr}(Y\times(\bb A^1//\bb G_m)^{n})_f\to\Omega_{\bb G_m^{\wedge 1}}\Omega_{S^1}(M_{fr}(Y\times(\bb A^1//\bb G_m)^{n}\otimes\bb G_m^{\wedge 1}\otimes S^1)_f)$$
(respectively  the map
$\wt{M}_{fr}(Y\times(\bb A^1//\bb G_m)^{n})\{1/2\}_f\to\Omega_{\bb G_m^{\wedge 1}}
\Omega_{S^1}(\wt{M}_{fr}(Y\times(\bb A^1//\bb G_m)^{n}\otimes\bb G_m^{\wedge 1}\otimes S^1)\{1/2\}_f)$)
is a schemewise stable weak equivalence of spectra. But the
latter follows from the Cancellation Theorem for framed motives~\cite{AGP}.
\end{proof}

\begin{cor}\label{Segal_Thm_Simplcor}
Let $k$ be an infinite perfect field. If $\charr k\ne 2$ (respectively
$k$ is of any characteristic), $n>0$ and
$\phi:\cc X\to\cc Y$ is a map between spaces from
$\bl{\longrightarrow}{\Delta^{\op}Fr_0(k)}$ such that
$\Sigma^{\infty}_{\bb P^1}(\phi)$ is an isomorphism in $SH(k)$
(respectively in $SH(k)[1/2]$) then the induced map
$\phi_*:C_*Fr(-,\cc X\times T^{n})\to C_*Fr(-,\cc Y\times T^{n})$
(respectively $\phi_*:C_*Fr(-,\cc X\times T^{n})\{1/2\}\to C_*Fr(-,\cc Y\times T^{n})\{1/2\}$)
is a local weak equivalence of motivic spaces.
\end{cor}

In the situation when $\cc
X\in\bl{\longrightarrow}{\Delta^{\op}Fr_0(k)}$ is such that the
space $C_*Fr(-,\cc X)$ is locally connected we arrive at the
following result:

\begin{thm}\label{infloopspaces2}
Let $k$ be an infinite perfect field of characteristic different
from 2. Suppose $\cc X\in\bl{\longrightarrow}{\Delta^{\op}Fr_0(k)}$
is such that the space $C_*Fr(-,\cc X)$ is locally connected. Then
$C_*Fr(-,\cc X)$ is an $\bb A^1$-local space and there is an
isomorphism
   $$\Omega^\infty_{\bb P^1}\Sigma^\infty_{\bb P^1}(\cc X)\cong C_*Fr(-,\cc X)$$
in $H_{\bb A^1}(k)$.
\end{thm}

\begin{proof}
The fact that $C_*Fr(-,\cc X)$ is an $\bb A^1$-local space is proved
similar to Corollary~\ref{cor:A1_fibrant_2}(2). The theorem would
follow if we showed that the $\bb P^1$-spectrum $M_{\bb P^{\wedge
1}}(\cc X)_f$ is motivically fibrant, because the suspension
spectrum $\Sigma_{\bb P^{1}}^\infty\cc X$ is stably equivalent to
$M_{\bb P^{\wedge 1}}(\cc X)_f$ by Theorem~\ref{Segal_Thm_Simpl}(1).
Now the fact that $M_{\bb P^{\wedge 1}}(\cc X)_f$ is motivically
fibrant repeats the proof of Theorem~\ref{Segal_Thm_Simpl}(2) word
for word.
\end{proof}

The proof of the preceding theorem shows the following

\begin{cor}\label{corinfloopspaces2}
Under the assumptions of Theorem~\ref{infloopspaces2} the $\bb
P^1$-spectrum $M_{\bb P^{\wedge 1}}(\cc X)_f$ is motivically
fibrant.
\end{cor}

By~\cite[3.1]{Bl} $sShv_\bullet(Sm/k)$ has the projective motivic
model structure in which generating cofibrations are given by
   $$X_+\wedge\partial\Delta^n_+\to X_+\wedge\Delta^n_+,\quad X\in Sm/k,\quad n\geq 0.$$
Equivalently, this family can be regarded as a family in
$\Delta^{\op}Fr_0(k)$ of the arrows $X\otimes\partial\Delta^n\to
X\otimes\Delta^n$, $n\geq 0$.

Let $\cc X\mapsto\cc X^c$ be the cofibrant replacement functor in
$sShv_\bullet(Sm/k)$ with respect to the projective model
structure.\label{blcof} Then $\cc
X^c\in\bl{\longrightarrow}{\Delta^{\op}Fr_0(k)}$, and hence
Theorem~\ref{Segal_Thm_Simpl} is applicable to it. It also follows
from Corollary~\ref{Segal_Thm_Simplcor} that each functor
   $$C_*Fr(-,(-)^c\times T^{n}):\cc X\in sShv_\bullet(Sm/k)\mapsto C_*Fr(-,\cc X^c\times T^{n})\in sShv_\bullet(Sm/k),\quad n\geq 1,$$
(respectively $\cc X\mapsto C_*Fr(-,\cc X^c\times T^{n})\{1/2\}$)
takes motivic weak equivalences to local weak equivalences whenever
$\charr k\ne 2$ (respectively $k$ is of any characteristic). Thus we
get functors
   $$C_*Fr(-,(-)^c\times T^{n}),C_*Fr(-,(-)^c\times T^{n})\{1/2\}:H_{\bb A^1}(k)\to H_{\nis}(k),\quad n\geq 1,$$
where $H_{\nis}(k)$ stands for the homotopy category of $sShv_\bullet(Sm/k)$
equipped with the local injective model structure.

Denote by $\Omega^\infty_{\bb P^1}\Sigma^\infty_{\bb P^1}(
H_{\nis}(k))$ the full subcategory of $H_{\nis}(k)$ consisting of
the infinite $\bb P^1$-loop spaces. Also, denote by
$\Sigma^\infty_{\bb P^1,1/2}:H_{\bb A^1}(k)\to SH(k)[1/2]$ the
composite functor
   $$H_{\bb A^1}(k)\xrightarrow{\Sigma^\infty_{\bb P^1}}SH(k)\to SH(k)[1/2],$$
where the right arrow is the standard localising functor inverting
2. The above arguments together with Theorem~\ref{Segal_Thm_Simpl}
imply the following

\begin{thm}\label{infloopspaces}
Let $k$ be an infinite perfect field. Then the following statements
are true for every field of characteristic different from 2:

\begin{enumerate}
\item The functor $C_*Fr(-,(-)^c\times T^{n})_f:H_{\bb A^1}(k)\to H_{\nis}(k)$, $n\geq 1$,
lands in $\Omega^\infty_{\bb P^1}\Sigma^\infty_{\bb P^1}( H_{\nis}(k))$.

\item For every $n\geq 1$ and $\cc X\in sShv_{\bullet}(Sm/k)$ the space
$C_*Fr(-,\cc X^c\times T^{n})_f$ has motivic homotopy type of
$\Omega^\infty_{\bb P^1}\Sigma^\infty_{\bb P^1}(\cc X\wedge T^n)$.
In particular, the functor $\Omega^\infty_{\bb
P^1}\Sigma^\infty_{\bb P^1}\circ(-\wedge T^n): H_{\bb
A^1}(k)\to\Omega^\infty_{\bb P^1}\Sigma^\infty_{\bb P^1}(
H_{\nis}(k))$ is isomorphic to the functor $\cc X\mapsto C_*Fr(-,\cc
X^c\times T^n)_f$.

\item For every $\cc X\in sShv_{\bullet}(Sm/k)$ the space
$\Omega_{\bb P^1}(C_*Fr(-,\cc X^c\times T)_f)$ has motivic homotopy
type of $\Omega^\infty_{\bb P^1}\Sigma^\infty_{\bb P^1}(\cc X)$. In
particular, the functor $\Omega^\infty_{\bb P^1}\Sigma^\infty_{\bb
P^1}:H_{\bb A^1}(k)\to\Omega^\infty_{\bb P^1}\Sigma^\infty_{\bb
P^1}( H_{\nis}(k))$ is isomorphic to the functor $\cc X\mapsto
\Omega_{\bb P^1}(C_*Fr(-,\cc X^c\times T)_f)$.

\item The functor $\Sigma^\infty_{\bb P^1}:H_{\bb A^1}(k)\to SH(k)$ is isomorphic to the
functor $\cc X\mapsto M_{\bb P^{\wedge 1}}(\cc X^c)_f$.
\end{enumerate}
In turn, if $k$ is of any characteristic, then:

\begin{itemize}
\item[$(1')$] The functor $C_*Fr(-,(-)^c\times T^{n})\{1/2\}_f:H_{\bb A^1}(k)\to H_{\nis}(k)$
lands in $\Omega^\infty_{\bb P^1}\Sigma^\infty_{\bb P^1}( H_{\nis}(k))$ for every $n\geq 1$.

\item[$(2')$] For every $n\geq 1$ and $\cc X\in sShv_{\bullet}(Sm/k)$ the space
$C_*Fr(-,\cc X^c\times T^{n})\{1/2\}_f$ has motivic homotopy type of
$\Omega^\infty_{\bb P^1}(\Sigma^\infty_{\bb P^1,1/2}(\cc X\wedge
T^n))$. In particular, the functor $\Omega^\infty_{\bb
P^1}(\Sigma^\infty_{\bb P^1,1/2}\circ(-\wedge T^n)): H_{\bb
A^1}(k)\to\Omega^\infty_{\bb P^1}\Sigma^\infty_{\bb P^1}(
H_{\nis}(k))$ is isomorphic to the functor $\cc X\mapsto C_*Fr(-,\cc
X^c\times T^n)\{1/2\}_f$.

\item[$(3')$] For every $\cc X\in sShv_{\bullet}(Sm/k)$ the space
$\Omega_{\bb P^1}(C_*Fr(-,\cc X^c\times T)\{1/2\}_f)$ has motivic
homotopy type of $\Omega^\infty_{\bb P^1}(\Sigma^\infty_{\bb
P^1,1/2}(\cc X))$. In particular, the functor $\Omega^\infty_{\bb
P^1}(\Sigma^\infty_{\bb P^1,1/2}):H_{\bb
A^1}(k)\to\Omega^\infty_{\bb P^1}\Sigma^\infty_{\bb P^1}(
H_{\nis}(k))$ is isomorphic to the functor $\cc X\mapsto\Omega_{\bb
P^1}(C_*Fr(-,\cc X^c\times T)\{1/2\}_f)$.

\item[$(4')$] $\Sigma^\infty_{\bb P^1,1/2}:H_{\bb A^1}(k)\to SH(k)[1/2]$ is isomorphic to the
functor $\cc X\mapsto M_{\bb P^{\wedge 1}}(\cc X^c)\{1/2\}_f$.
\end{itemize}
\end{thm}

\begin{cor}\label{infloopspacescor}
Let $k$ be an infinite perfect field. If $\charr k\ne 2$
(respectively $k$ is of any characteristic) and $n>0$ then for any
map of motivic spaces $\phi:\cc Y\to\cc Z$ such that
$\Sigma^{\infty}_{\bb P^1}(\phi)$ is an isomorphism in $SH(k)$
(respectively in $SH(k)[1/2]$) the induced map $\phi_*:C_*Fr(-,\cc
Y^c\times T^{n})\to C_*Fr(-,\cc Z^c\times T^{n})$ (respectively
$\phi_*:C_*Fr(-,\cc Y^c\times T^{n})\{1/2\}\to C_*Fr(-,\cc Z^c\times
T^{n})\{1/2\}$ is a local weak equivalence.
\end{cor}

\begin{rem}{\rm
In fact, assertion (3) of Theorem~\ref{infloopspaces} is true for
any local group-like completion of the motivic space $C_*Fr(-,\cc
X^c)_f$, functorial in $\cc X$. It follows from the Additivity
Theorem that the $\pi_0$-sheaf of $C_*Fr(-,\cc X^c)$ is a sheaf of
Abelian monoids. One of such local group-like completion functors is
given in the assertion. Another local group-like completion is given
by $\cc X\mapsto\Omega_{S^1}(C_*Fr(-,\cc X^c\otimes S^1)_f)$. In
particular, there is a motivic weak equivalence of spaces
   $$\Omega^\infty_{\bb P^1}\Sigma^\infty_{\bb P^1}(\cc X)\simeq\Omega_{S^1}(C_*Fr(-,\cc X^c\otimes S^1)_f)$$
for any $\cc X\in sShv_\bullet(Sm/k)$ and any infinite perfect field with $\charr k\ne 2$.

}\end{rem}

In turn, if $\cc X\in sShv_\bullet(Sm/k)$ is such that the space
$C_*Fr(-,\cc X^c)$ is locally connected we get the following result:

\begin{thm}\label{infloopspaces3}
Let $k$ be an infinite perfect field of characteristic different
from 2 and let $\cc X\in sShv_\bullet(Sm/k)$ be such that the space
$C_*Fr(-,\cc X^c)$ is locally connected. Then $C_*Fr(-,\cc X^c)$ is
an $\bb A^1$-local space and there is an isomorphism
   $$\Omega^\infty_{\bb P^1}\Sigma^\infty_{\bb P^1}(\cc X)\cong C_*Fr(-,\cc X^c)$$
in $H_{\bb A^1}(k)$.
\end{thm}

\begin{proof}
Since $\cc X^c\in\bl{\longrightarrow}{\Delta^{\op}Fr_0(k)}$, the
theorem follows from Theorem~\ref{infloopspaces2}.
\end{proof}

The proof of the preceding theorem shows the following

\begin{cor}\label{corinfloopspaces3}
Under the assumptions of Theorem~\ref{infloopspaces3} the $\bb
P^1$-spectrum $M_{\bb P^{\wedge 1}}(\cc X^c)_f$ is motivically
fibrant. Moreover, $\Sigma_{\bb P^1}^\infty\cc X$ is isomorphic to
$M_{\bb P^{\wedge 1}}(\cc X^c)_f$ in $SH(k)$.
\end{cor}

\section{Further applications of framed motives}\label{furtherappl}

Having applied the macinery of framed motives to prove
Theorem~\ref{Segal_Thm_II}, we want to give further applications.
One of the applications computes the suspension bispectrum
$\Sigma^\infty_{S^1}\Sigma^\infty_{\bb G}X_+$ of a smooth algebraic
variety $X$ in terms of twisted framed motives of $X$. Another
important application will be purely topological. It will compute
the classical sphere spectrum $\Sigma^\infty_{S^1}S^0$ as the framed
motive $M_{fr}(pt)(pt)$ of the point $pt=\spec k$ evaluated at the
point whenever the base field $k$ is algebraically closed of
characteristic zero.

Denote by $\bb G$ the cone $(\bb G_m)_+//pt_+$ of the embedding
$pt_+\bl 1\hookrightarrow(\bb G_m)_+$ in the category of pointed
simplicial presheaves $sPre_\bullet(Sm/k)$. It is termwise equal to
   $$(-,\bb G_m)_+,(-,\bb G_m)_+\vee(-,pt)_+,(-,\bb G_m)_+\vee(-,pt)_+\vee(-,pt)_+,\ldots$$
Moreover, its sheafification equals $(\bb G_m^{\wedge 1})_+$, which
is termwise equal to
   $$(-,\bb G_m)_+,(-,\bb G_m\sqcup pt)_+,(-,\bb G_m\sqcup pt\sqcup pt)_+,\ldots$$
The sheafification is represented in the category $\Delta^{\op}(Fr_0(k))$ by the object
$\bb G_m^{\wedge 1}$ (see Notation~\ref{edem}), which is termwise equal to
   $$\bb G_m,\bb G_m\sqcup pt,\bb G_m\sqcup pt\sqcup pt,\ldots$$

One of the models for Morel--Voevodsky's $SH(k)$ can be defined in
terms of $(S^1,\bb G)$-bispectra (see, e.g.,~\cite{Jar}). The main
$(S^1,\bb G)$-bispectrum we work with is given by the sequence of framed motives
   $M_{fr}^{\bb G}(X)=(M_{fr}(X),M_{fr}(X\times\bb G_m^{\wedge 1}),M_{fr}(X\times\bb G_m^{\wedge 2}),\ldots),\quad X\in Sm/k,$
where simplicial objects $\bb G_m^{\wedge n}\in \Delta^{\op}(Fr_0(k))$ are defined in Notation~\ref{edem}.
Each structure map is defined as the composition
   $$M_{fr}(X\times\bb G_m^{\wedge n})\to\underline{\Hom}((\bb G_m^{\wedge 1})_+,M_{fr}(X\times\bb G_m^{\wedge n+1}))
      \to\underline{\Hom}(\bb G,M_{fr}(X\times\bb G_m^{\wedge n+1})),$$
where the left map is defined as~\eqref{smashell}. We shall also
call $M_{fr}(X\times\bb G_m^{\wedge n})$ the {\it $n$-twisted framed
motive of $X$}, and write $M_{fr}(X)(n)$ to denote $M_{fr}(X\times\bb G_m^{\wedge n})$.
So we can write for brevity
$$M_{fr}^{\bb G}(X)=(M_{fr}(X),M_{fr}(X)(1),M_{fr}(X)(2),\ldots),$$
to denote the $(S^1,\bb G)$-bispectrum $M_{fr}^{\bb G}(X)$.

In turn, we can similarly define a bispectrum
   $$\wt{M}_{fr}^{\bb G}(X)\{1/2\}=(\wt{M}_{fr}(X)\{1/2\},\wt{M}_{fr}(X\times\bb G_m^{\wedge 1})\{1/2\},
     \wt{M}_{fr}(X\times\bb G_m^{\wedge 2})\{1/2\},\ldots),\quad X\in Sm/k.$$
It is worth to mention that the bispectrum $M_{fr}^{\bb G}(X)$ is
constructed in the same way as the bispectrum
   $$M_{\bb K}^{\bb G}(X)=(M_{\bb K}(X),M_{\bb K}(X)(1),M_{\bb K}(X)(2),\ldots),\quad X\in Sm/k,$$
where each $S^1$-spectrum $M_{\bb K}(X)(n)=M_{\bb K}(X\times\bb
G_m^{\wedge n})$ is the $n$-twisted $K$-motive of $X$ in the sense
of~\cite{GP2}. It was shown in~\cite{GP2} that $M_{\bb K}^{\bb
G}(pt)$ represents the bispectrum $f_0(KGL)$.

The main result of this section is as follows.

\begin{thm}\label{Motivic_Segal_Thm_III}
Let $k$ be an infinite perfect field. If $\charr k\ne 2$
(respectively $k$ is of any characteristic) then for any $X\in
Sm/k$, the canonical map of bispectra
$\Sigma^{\infty}_{S^1}\Sigma^{\infty}_{\bb G}X_+\to M_{fr}^{\bb G}(X)$
(respectively $\Sigma^{\infty}_{S^1}\Sigma^{\infty}_{\bb G}X_+\to
\wt{M}^{\bb G}_{fr}(X)\{1/2\}$) is a stable motivic weak equivalence
(respectively $2^{-1}$-stable motivic weak equivalence).
\end{thm}

\begin{proof}
If $\charr k\ne 2$ (respectively $k$ is of any characteristic) then
it is enough to prove that on bigraded presheaves $\pi^{\bb
A^1}_{*,*}(\gamma)$ (respectively $\pi^{\bb
A^1}_{*,*}(\gamma)\otimes\bb Z[1/2]$) is an isomorphism. We shall
prove the theorem for the case when $\charr k\ne 2$, because the
other case is proved in a similar fashion. Every bispectrum yields a
$S^1\wedge \bb G$-spectrum by taking diagonal. In order to avoid
massive notation, we prove the theorem for the case $X=pt$. The same
proof works for any $X\in Sm/k$. Let $r\geq 0$, $s$, $n\geq 1$ be
integers with $s+n\geq 0$. There is a commutative diagram in the
homotopy category $H_{\bb A^1}(k)$ of pointed motivic spaces
\footnotesize
$$\xymatrix{[S^r\wedge U_+\wedge (S^1\wedge \bb G)^{s+n}, (S^1\wedge \bb G)^{n}] \ar[rr]^{(1_n)}&&
[S^r\wedge U_+\wedge (S^1\wedge \bb G)^{s+n}, C_*Fr(({S}^1\otimes \bb G^{\wedge 1}_m)^{\wedge n})]  \\
            [S^r\wedge U_+\wedge (S^1\wedge \bb G)^{s+n}, (\bb A^1_+//(\bb G_m)_+)^{n}] \ar[rr]^{(2_n)}\ar[u]_{(u^{\wedge n})_*}\ar[d]^{(v^{\wedge n})_*} &&
            [S^r\wedge U_+\wedge (S^1\wedge \bb G)^{s+n}, C_*Fr((\bb A^1//\bb G_m)^{\wedge n})] \ar[u]^{{u}^n_*} \ar[d]_{v^{n}_*} \\
            [S^r\wedge U_+\wedge (S^1\wedge \bb G)^{s+n}, T^{n}] \ar[rr]^{(3_n)}\ar[d]^{(u^{\wedge (s+n)})^*} &&
            [S^r\wedge U_+\wedge (S^1\wedge \bb G)^{s+n}, C_*Fr(T^{n})] \ar[d]_{(u^{\wedge (s+n)})^*} \\
            [S^r\wedge U_+\wedge (\bb A^1_+//(\bb G_m)_+)^{s+n}, T^{n}] \ar[rr]^{(4_n)} &&
            [S^r\wedge U_+\wedge (\bb A^1_+//(\bb G_m)_+)^{s+n}, C_*Fr(T^{n})] \\
            [S^r\wedge U_+\wedge T^{s+n}, T^{n}] \ar[rr]^{(5_n)} \ar[u]_{(v^{\wedge (s+n)})^*} \ar[d]^{(\sigma^{\wedge (s+n)})^*} &&
            [S^r\wedge U_+\wedge T^{s+n}, C_*Fr(T^{n})] \ar[u]^{(v^{\wedge (s+n)})^*} \ar[d]_{(\sigma^{\wedge (s+n)})^*} \\
            [S^r\wedge U_+\wedge \bb P^{\wedge {s+n}}, T^{n}] \ar[rr]^{(6_n)}  &&
            [S^r\wedge U_+\wedge \bb P^{\wedge {s+n}}, C_*Fr(T^{n})]}$$
\normalsize
In this diagram all left vertical arrows are bijections, since the natural morphisms
$u:\bb A^1_+//(\bb G_m)_+\to S^1\wedge \bb G$, $v:\bb A^1_+//(\bb G_m)_+\to T$ and
$\sigma:\bb P^{\wedge 1}\to T$ are motivic equivalences.
All right vertical arrows are bijections, since the morphisms
   $$C_*Fr((\bb A^1//\bb G_m)^{\wedge n}) \xrightarrow{{u}^n_*} C_*Fr(({S}^1\otimes\bb G^{\wedge 1}_m)^{\wedge n}), \ \
C_*Fr((\bb A^1//\bb G_m)^{\wedge n}) \xrightarrow{v^{n}_*} C_*Fr(T^{n})$$
are local equivalences by Corollaries~\ref{potom1} and~\ref{potom2}.

Fit now each of twelve vertices of the diagram into a direct colimit over $n$ as follows.
For vertices on the right hand side we form direct colimits
with respect to the $T$-spectum with entries $\{C_*Fr(T^n)\}$,
with respect to the $\bb A^1_+//(\bb G_m)_+$-spectrum
with entries $\{C_*Fr((\bb A^1//\bb G_m)^{\wedge n})\}$ as well as
with respect to the $S^1\wedge \bb G$-spectrum with entries
$\{C_*Fr(({S}^1\otimes \bb G^{\wedge 1}_m)^{\wedge n})\}$. Likewise,
for vertices on the right hand side we form direct colimits
with respect to the $T$-spectum with entries $\{T^n\}$,
with respect to the $\bb A^1_+//(\bb G_m)_+$-spectum with entries $\{(\bb A^1//\bb G_m)^{\wedge n}\}$,
with respect to the $S^1\wedge \bb G$-spectrum with entries $\{(S^1\wedge \bb G)^{\wedge n}\}$.

The family of morphisms $\{(6_n)\}$ forms a morphism on the direct
colimit, since it corresponds to the $T$-spectra morphism $\{T^n\}
\to \{C_*Fr(T^n)\}$. For $i=5,4,3$ the families of morphisms
$\{(i_n)\}$ form morphisms on the direct colimits by the same
reason. The family of morphisms $\{(1_n)\}$ forms a morphism on the
direct colimit, since it corresponds to the $S^1\wedge \bb
G$-spectra morphism $\{(S^1\wedge \bb G)^{\wedge n}\} \to
\{C_*Fr(({S}^1\otimes \bb G^{\wedge 1}_m)^{\wedge n})\}$. Finally, the
family of morphisms $\{(2_n)\}$ forms a morphism on the direct
colimit, since it corresponds to the $\bb A^1_+//(\bb
G_m)_+$-spectra morphism $\{(\bb A^1//\bb G_m)^{\wedge n}\} \to
\{C_*Fr((\bb A^1//\bb G_m)^{\wedge n})\}$. In a similar fashion for each
vertical map the corresponding family of arrows forms a morphism on
the direct colimits.

In this way we get a commutative diagram consisting of twelve direct
colimits and morphisms between them. We also get a commutative
diagram consisting of twelve groups and homomorphisms between them.
In that diagram of groups all the vertical arrows are isomorphisms
as mentioned above. The bottom arrow is an isomorphism by
Theorem~\ref{Segal_Thm_II}, hence so is the top arrow. We conclude
that for $r\geq 0$ the map of presheaves $\pi^{\bb
A^1}_{r+2s,s}(\Sigma^{\infty}_{\bb G}
\Sigma^{\infty}_{S^1}(S^0))\xrightarrow{\gamma_*} \pi^{\bb
A^1}_{r+2s,s}(M^{\bb G}_{fr}(pt))$ is an isomorphism for any integer $s$.
In another words, the map $\gamma_*$ is an isomorphisms on
presheaves $\pi^{\bb A^1}_{a,b}$ with $2a-b\geq 0$. In particular,
for any $U\in Sm/k$ and any $t>0$ the map
   $$\pi^{\bb A^1}_{2a,a}(\Sigma^{\infty}_{\bb G} \Sigma^{\infty}_{S^1}(S^0))(U\times \bb G_m^{\times t})\xrightarrow{\gamma_*}
      \pi^{\bb A^1}_{2a,a}(M^{\bb G}_{fr}(pt))(U\times \bb G_m^{\times t})$$
is an isomorphism. Note that
$\pi^{\bb A^1}_{2a,a}(\Sigma^{\infty}_{\bb G} \Sigma^{\infty}_{S^1}(S^0))(U_+\wedge \bb G_m^{\wedge t})$
is a canonical direct summand of the group
$\pi^{\bb A^1}_{2a,a}(\Sigma^{\infty}_{\bb G} \Sigma^{\infty}_{S^1}(S^0))(U\times \bb G_m^{\times t})$,
and the group
$\pi^{\bb A^1}_{2a,a}(M_{fr}^{\bb G}(pt))(U\wedge \bb G_m^{\wedge t})$
is a canonical direct summand of the group
$\pi^{\bb A^1}_{2a,a}(M_{fr}^{\bb G}(pt))(U\times \bb G_m^{\times t})$.
Hence the map
\footnotesize
$$
\pi^{\bb A^1}_{2a-t,a}(\Sigma^{\infty}_{\bb G} \Sigma^{\infty}_{S^1}(S^0))(U)=\pi^{\bb A^1}_{2a,a}(\Sigma^{\infty}_{\bb G} \Sigma^{\infty}_{S^1}(S^0))(U_+\wedge \bb G_m^{\wedge t})
\xrightarrow{\gamma_*}
\pi^{\bb A^1}_{2a,a}(M^{\bb G}_{fr}(pt))(U_+\wedge \bb G_m^{\wedge t})= \pi^{\bb A^1}_{2a-t,a}(M^{\bb G}_{fr}(pt))(U)
$$
\normalsize
is an isomorphism too. Thus, the map $\gamma_*$
is an isomorphism on presheaves $\pi^{\bb A^1}_{a,b}$ with $2a-b < 0$, whence the theorem.
\end{proof}

\begin{cor}\label{coruhu}
Let $k$ be an infinite perfect field and $X$ be smooth. If $\charr
k\ne 2$ (respectively $k$ is of any characteristic) then
$\pi_{-n,-n}^{\bb A^1}((\Sigma^\infty_{S^1}\Sigma^\infty_{\bb G}
X_+)(pt))$ (respectively $\pi_{-n,-n}^{\bb
A^1}((\Sigma^\infty_{S^1}\Sigma^\infty_{\bb G} X_+)(pt))\otimes\bb
Z[1/2]$), $n\geq 0$, is the Grothendieck group of the commutative
monoid $\pi_0(C_*Fr(pt,X\times\bb G_m^{\wedge n}))$ (respectively
the Abelian group $\pi_0(C_*Fr(pt,X\times\bb G_m^{\wedge
n})\{1/2\})$).
\end{cor}

\begin{cor}\label{corugu}
Let $k$ be an infinite perfect field and let $X$ be smooth. If
$\charr k\ne 2$ (respectively $k$ is of any characteristic) then
$\pi_{-n,-n}^{\bb A^1}(\Sigma^\infty_{S^1}\Sigma^\infty_{\bb G}
X_+)(pt)$
   $$\pi_{-n,-n}^{\bb A^1}(\Sigma^\infty_{S^1}\Sigma^\infty_{\bb G} X_+)(pt)=H_0(\bb
     ZF(\Delta_k^\bullet,X\times\bb G_m^{\wedge n})),\quad n\geq 0$$
(respectively $\pi_{-n,-n}^{\bb
A^1}(\Sigma^\infty_{S^1}\Sigma^\infty_{\bb G} X_+)(pt)\otimes\bb
Z[1/2]=H_0(\bb ZF(\Delta_k^\bullet,X\times\bb G_m^{\wedge
n}))\otimes\bb Z[1/2]$). In particular, $\pi_{-n,-n}^{\bb
A^1}(\Sigma^\infty_{S^1}\Sigma^\infty_{\bb G} S^0)(pt)=H_0(\bb
ZF(\Delta_k^\bullet,\bb G_m^{\wedge n}))=K_n^{MW}(k)$ if $n\geq 0$
and $\charr k=0$.
\end{cor}

\begin{proof}
The fact that $H_0(\bb ZF(\Delta_k^\bullet,\bb G_m^{\wedge
n}))=K_n^{MW}(k)$, $n\geq 0$, has been proved by Neshitov
in~\cite{Nesh} for fields of characteristic zero. Thus the statement
recovers the celebrated theorem of Morel~\cite{Mor1} for
Milnor--Witt $K$-theory.
\end{proof}

It is also useful to have Theorem~\ref{Motivic_Segal_Thm_III} for
simplicial schemes or, more generally, for objects in
$\bl{\longrightarrow}{\Delta^{\op}Fr_0(k)}$ (cf.
Theorem~\ref{Segal_Thm_Simpl}).

\begin{thm}\label{Motivic_Segal_Thm_SimplBi}
Let $k$ be an infinite perfect field. If $\charr k\ne 2$
(respectively $k$ is of any characteristic) then for any $Y\in
\bl{\longrightarrow}{\Delta^{\op}Fr_0(k)}$, the canonical map of
bispectra $\Sigma^{\infty}_{S^1}\Sigma^{\infty}_{\bb G}Y_+\to M^{\bb
G}_{fr}(Y)$ (respectively $\Sigma^{\infty}_{S^1}\Sigma^{\infty}_{\bb
G}Y_+\to \wt{M}^{\bb G}_{fr}(Y)\{1/2\}$) is a stable motivic weak
equivalence (respectively $2^{-1}$-stable motivic weak equivalence).
\end{thm}

\begin{proof}
If we use Theorem~\ref{Segal_Thm_Simpl}, our proof repeats that of
Theorem~\ref{Motivic_Segal_Thm_III} word for word.
\end{proof}

Here is an application of the preceding theorem.

\begin{thm}\label{applic}
Suppose the base field $k$ is perfect infinite with $\charr k\neq2$
(respectively $k$ is of any characteristic). For any
$Y\in\bl{\longrightarrow}{\Delta^{\op}Fr_0(k)}$ one has a canonical
isomorphism
\begin{equation*}\label{KeyAdjunction1}
SH(k)(\Sigma^{\infty}_{\bb G} \Sigma^{\infty}_{S^1}X_+,
\Sigma^{\infty}_{\bb G} \Sigma^{\infty}_{S^1}Y_+[n])=
SH_{S^1}^{\nis}(k)(\Sigma^{\infty}_{S^1}X_+,M_{fr}(Y)[n]),\quad n\geq 0,
\end{equation*}
(respectively $SH(k)[1/2](\Sigma^{\infty}_{\bb G}
\Sigma^{\infty}_{S^1}X_+, \Sigma^{\infty}_{\bb G}
\Sigma^{\infty}_{S^1}Y_+[n])=
SH_{S^1}^{\nis}(k)(\Sigma^{\infty}_{S^1}X_+,M_{fr}(Y)\{1/2\}[n])$),
where $SH_{S^1}^{\nis}(k)$ is the ordinary stable homotopy category
of Nisnevich sheaves of $S^1$-spectra.
\end{thm}

\begin{proof}
Suppose the base field $k$ is perfect infinite with $\charr k\neq2$.
Consider a bispectrum
   $$M_{fr}^{\bb G}(Y)_f=(M_{fr}(Y)_f,M_{fr}(Y\times\bb G_m^{\wedge 1})_f,M_{fr}(Y\times\bb G_m^{\wedge 2})_f,\ldots)$$
obtained from $M_{fr}^{\bb G}(Y)$ by taking Nisnevich local stable
fibrant replacements at each level. It is shown similar
to~\cite[Theorem~B]{AGP} that $M_{fr}^{\bb G}(Y)_f$ is a
motivically fibrant bispectrum.

Theorem~\ref{applic} implies an isomorphism
   $$SH(k)(\Sigma^{\infty}_{\bb G} \Sigma^{\infty}_{S^1}X_+,\Sigma^{\infty}_{\bb G} \Sigma^{\infty}_{S^1}Y_+[n])=
        SH(k)(\Sigma^{\infty}_{\bb G}\Sigma^{\infty}_{S^1}X_+,M_{fr}^{\bb G}(Y)_f[n]),\quad n\geq 0.$$
But
   \begin{gather*}SH(k)(\Sigma^{\infty}_{\bb G}\Sigma^{\infty}_{S^1}X_+,M_{fr}^{\bb G}(Y)_f[n])\cong
       SH_{S^1}(k)(\Sigma^{\infty}_{S^1}X_+,M_{fr}(Y)_f[n])\cong\\
       SH_{S^1}^{\nis}(k)(\Sigma^{\infty}_{S^1}X_+,M_{fr}(Y)_f[n])
       \cong SH_{S^1}^{\nis}(k)(\Sigma^{\infty}_{S^1}X_+,M_{fr}(Y)[n]).
   \end{gather*}
If $k$ is of any characteristic then we can likewise show an isomorphism
$$SH(k)[1/2](\Sigma^{\infty}_{\bb G} \Sigma^{\infty}_{S^1}X_+,
\Sigma^{\infty}_{\bb G} \Sigma^{\infty}_{S^1}Y_+[n])=
SH_{S^1}^{\nis}(k)(\Sigma^{\infty}_{S^1}X_+,\wt{M}_{fr}(Y)\{1/2\}[n]).$$
It remains to observe that ${M}_{fr}(Y)\{1/2\}$ is stably equivalent
to $\wt{M}_{fr}(Y)\{1/2\}$.
\end{proof}

\begin{cor}\label{Motivic_Segal_Thm_Cor}
Suppose the base field $k$ is perfect infinite with $\charr k\neq2$
(respectively $k$ is of any characteristic). For any morphism
$\phi:Y\to Z$ in $\bl{\longrightarrow}{\Delta^{\op}Fr_0(k)}$ such
that $\Sigma^{\infty}_{\bb G} \Sigma^{\infty}_{S^1}(\phi)$ is an
isomorphism in $SH(k)$ (respectively in $SH(k)[1/2]$), the morphism
of framed motives $M_{fr}(\phi):M_{fr}(Y)\to M_{fr}(Z)$
(respectively $M_{fr}(\phi)\{1/2\}:M_{fr}(Y)\{1/2\}\to
M_{fr}(Z)\{1/2\}$) is a local stable equivalence of $S^1$-spectra.
\end{cor}

Let $\cc X\mapsto\cc X^c$ be the cofibrant replacement functor in
$sShv_\bullet(Sm/k)$ (see p.~\pageref{blcof}). Then $\cc X^c$ is in
$\bl{\longrightarrow}{\Delta^{\op}Fr_0(k)}$, and hence
Theorems~\ref{Motivic_Segal_Thm_SimplBi}-\ref{applic} are applicable
to it. It also follows from Corollary~\ref{Motivic_Segal_Thm_Cor}
that each functor
   $$M_{fr}((-)^c):\cc X\in sShv_\bullet(Sm/k)\mapsto M_{fr}(\cc X^c)\in Sp_{S^1}(sShv_\bullet(Sm/k))$$
(respectively $\cc X\mapsto M_{fr}(\cc X^c)\{1/2\}$) takes motivic
weak equivalences to stable local weak equivalences whenever $\charr
k\ne 2$ (respectively $k$ is of any characteristic). Thus we get
functors
   $$M_{fr}((-)^c),M_{fr}((-)^c)\{1/2\}:H_{\bb A^1}(k)\to SH_{S^1}^{\nis}(k),$$
where $SH_{S^1}^{\nis}(k)$ stands for the homotopy category of $Sp_{S^1}(sShv_\bullet(Sm/k))$
equipped with the stable local injective model structure.

In a similar fashion we define functors
   $$M^{\bb G}_{fr}((-)^c),M^{\bb G}_{fr}((-)^c)\{1/2\}:H_{\bb A^1}(k)\to SH(k).$$
Explicitly, they take a motivic space $\cc X$ to bispectra $M^{\bb
G}_{fr}(\cc X^c),M^{\bb G}_{fr}(\cc X^c)\{1/2\}$. In fact, we shall
extend both functors to $SH(k)$ in Section~\ref{mgfrfunctor}.

Denote by $\Omega^\infty_{\bb G}(SH_{S^1}^{\nis}(k))$ the full
subcategory of $SH_{S^1}^{\nis}(k)$ consisting of the infinite $\bb G$-loop spectra.
Also, denote by $\Sigma^\infty_{\bb G}\Sigma^\infty_{S^1,1/2}:H_{\bb A^1}(k)\to SH(k)[1/2]$ the composite functor
   $$H_{\bb A^1}(k)\xrightarrow{\Sigma^\infty_{\bb G}\Sigma^\infty_{S^1}}SH(k)\to SH(k)[1/2],$$
where the right arrow is the standard localising functor inverting
2. The above arguments together with
Theorems~\ref{Motivic_Segal_Thm_SimplBi}-\ref{applic} and
Corollary~\ref{Motivic_Segal_Thm_Cor} imply the following

\begin{thm}\label{infloopspectra}
Let $k$ be an infinite perfect field. Then the following statements are
true for every field of characteristic different from 2:

\begin{enumerate}
\item The functor $M_{fr}((-)^c)_f:H_{\bb A^1}(k)\to SH_{S^1}^{\nis}(k)$
lands in $\Omega^\infty_{\bb G}(SH_{S^1}^{\nis}(k))$, where ``$f$" refers
to the stable local fibrant replacement of $S^1$-spectra.

\item For every $\cc X\in sShv_{\bullet}(Sm/k)$ the spectrum
$M_{fr}(\cc X^c)_f$ has stable motivic homotopy type
of $\Omega^\infty_{\bb G}\Sigma^\infty_{\bb G}\Sigma^\infty_{S^1}(\cc X)$.
In particular, the functor
$\Omega^\infty_{\bb G}\Sigma^\infty_{\bb G}\Sigma^\infty_{S^1}:
H_{\bb A^1}(k)\to\Omega^\infty_{\bb G}(SH_{S^1}^{\nis}(k))$
is isomorphic to the functor $\cc X\mapsto M_{fr}(\cc X^c)_f$.

\item The functor $\Sigma^\infty_{\bb G}\Sigma^\infty_{S^1}:H_{\bb A^1}(k)\to SH(k)$ is isomorphic to the
functor $\cc X\mapsto M^{\bb G}_{fr}(\cc X^c)$.
\end{enumerate}
In turn, if $k$ is of any characteristic, then:

\begin{itemize}
\item[$(1')$]  The functor $M_{fr}((-)^c)\{1/2\}_f:H_{\bb A^1}(k)\to SH_{S^1}^{\nis}(k)$
lands in $\Omega^\infty_{\bb G}(SH_{S^1}^{\nis}(k))$, where ``$f$" refers
to the stable local fibrant replacement of $S^1$-spectra.

\item[$(2')$] For every $\cc X\in sShv_{\bullet}(Sm/k)$ the spectrum
$M_{fr}(\cc X^c)\{1/2\}_f$ has stable motivic homotopy type
of $\Omega^\infty_{\bb G}\Sigma^\infty_{\bb G}\Sigma^\infty_{S^1,1/2}(\cc X)$.
In particular, the functor
$\Omega^\infty_{\bb G}\Sigma^\infty_{\bb G}\Sigma^\infty_{S^1,1/2}:
H_{\bb A^1}(k)\to\Omega^\infty_{\bb G}(SH_{S^1}^{\nis}(k))$
is isomorphic to the functor $\cc X\mapsto M_{fr}(\cc X^c)\{1/2\}_f$.

\item[$(3')$] $\Sigma^\infty_{\bb G}\Sigma^\infty_{S^1,1/2}:H_{\bb A^1}(k)\to SH(k)$ is isomorphic to the
functor $\cc X\mapsto M^{\bb G}_{fr}(\cc X^c)\{1/2\}$.
\end{itemize}
\end{thm}

\begin{cor}\label{infloopspectracor}
Let $k$ be an infinite perfect field. If $\charr k\ne 2$ (respectively
$k$ is of any characteristic) then for any map
of motivic spaces $\phi:\cc Y\to\cc Z$ such that
$\Sigma^\infty_{S^1}\Sigma^\infty_{\bb G}(\phi)$ is an isomorphism in $SH(k)$
(respectively in $SH(k)[1/2]$) the induced map
$\phi_*:M_{fr}(\cc Y^c)\to M_{fr}(\cc Z^c)$
(respectively $\phi_*:M_{fr}(\cc Y^c)\{1/2\}\to M_{fr}(\cc Z^c)\{1/2\}$)
is a local stable equivalence.
\end{cor}

We finish the section with a topological application of framed motives. It gives
an explicit model for the classical sphere spectrum $\Sigma^\infty_{S^1} S^0$.

\begin{thm}\label{corres}
Let $k$ be an algebraically closed field of characteristic zero, with embedding
$k\hookrightarrow\bb C$.
Then the framed motive $M_{fr}(pt)(pt)$ of the point $pt=\spec k$
evaluated at $pt$ has stable homotopy type of the classical sphere spectrum
$\Sigma^\infty_{S^1}S^0$. In particular, $C_*Fr(pt,pt\otimes S^1)$ has homotopy type of
$\Omega^\infty_{S^1}\Sigma^\infty_{S^1}S^1$.
\end{thm}

\begin{proof}
By a theorem of Levine~\cite{Levine} the functor $c:SH\to SH(k)$,
induced by the functor
   $$sSets_\bullet\to sPre_\bullet(Sm/k)$$
sending a pointed simplicial set to the constant presheaf on $Sm/k$,
is fully faithful. The functor $c$ comes from a left Quillen functor (see the proof
of~\cite[6.5]{Levine}). Its right Quillen functor from bispectra to ordinary $S^1$-specra takes a
bispectrum $E=(E_0, E_1,\ldots)$ to $E_0(pt)$. Moreover,
$c$ induces an isomorphism
${\pi}_n(E)\to {\pi}_{n,0}^{\bb A^1}(c(E))$ for all spectra $E$.

Consider $M_{fr}^{\bb G}(pt)$. By
Theorem~\ref{Motivic_Segal_Thm_III} the canonical morphism
   $$\Sigma^\infty_{S^1}\Sigma^\infty_{\bb G}S^0=c(\Sigma^\infty_{S^1}S^0)\to M_{fr}^{\bb G}(pt)$$
is a motivic stable equivalence of bispectra. Consider a bispectrum
   $$M_{fr}^{\bb G}(pt)_f=(M_{fr}(pt)_f,M_{fr}(\bb G_m^{\wedge 1})_f,M_{fr}(\bb G_m^{\wedge 2})_f,\ldots)$$
obtained from $M_{fr}^{\bb G}(pt)$ by taking Nisnevich local stable
fibrant replacements at each level. By~\cite[Theorem~B]{AGP}
$M_{fr}^{\bb G}(pt)_f$ is a motivically fibrant bispectrum, and
hence a fibrant replacement of
$\Sigma^\infty_{S^1}\Sigma^\infty_{\bb G}S^0$.

It follows that $M_{fr}(pt)_f(pt)$ is a fibrant replacement of
$\Sigma^\infty_{S^1}S^0$, because each homomorphism
${\pi}_n(\Sigma^\infty_{S^1}S^0)\to {\pi}_{n,0}^{\bb
A^1}(c(\Sigma^\infty_{S^1}S^0)) \bl\cong\to\pi_{n,0}^{\bb
A^1}(M^{\bb G}_{fr}(pt)_f)=\pi_n(M_{fr}(pt)_f(pt))$ is an
isomorphism. It remains to observe that $M_{fr}(pt)_f(pt)$ is stably
equivalent to $M_{fr}(pt)(pt)$.
\end{proof}

\section{The big framed motive functor $\cc M^{b}_{fr}$}\label{mgfrfunctor}

In Section~\ref{furtherappl} we have computed the functor
$\Sigma^\infty_{S^1}\Sigma^\infty_{\bb G}:H_{\bb A^1}(k)\to SH(k)$
as the functor $M_{fr}^{\bb G}:H_{\bb A^1}(k)\to SH(k)$. We extend
the latter functor to bispectra below, but first we start with
preparations. We assume in this section that the base field $k$ is
infinite perfect with $\charr k\ne 2$.

We first give an explicit description of $Ex^\infty(C_*\cc Fr(\cc
X))$ for every space $\cc X\in sShv_\bullet(Sm/k)$, where
$Ex^\infty$ is the Kan complex. Voevodsky~\cite[Section~3]{VoeICM}
defined a realization functor from simplicial sets to Nisnevich
sheaves $|-|:sSets\to Shv_{\nis}(Sm/k)$ such that
$|\Delta[n]|=\Delta^n$, where $\Delta[n]$ is the standard
$n$-simplex. Under this notation the cosimpicial scheme
$\Delta^\bullet$ equals $|\Delta[\bullet]|$. For every $\ell\geq 0$
denote by $sd^\ell\Delta^\bullet$ the cosimplicial Nisnevich sheaf
$|sd^\ell\Delta[\bullet]|$. Under this notation we then have a
canonical isomorphism of motivic spaces
   $$Ex^\ell(C_*\cc Fr(\cc X))=\cc Fr(|sd^\ell\Delta[\bullet]|_+\wedge-,\cc X).$$
It follows that $Ex^\ell(C_*\cc Fr(\cc X))$ is a space with framed
correspondences as well as the space
   $$Ex^\infty(C_*\cc Fr(\cc X))=\colim_\ell Ex^\ell(C_*\cc Fr(\cc X)).$$
Then $Ex^\infty(C_*\cc Fr(\cc X))$ is a sectionwise fibrant pointed
simplicial set and the canonical map $C_*\cc Fr(\cc X)\to
Ex^\infty(C_*\cc Fr(\cc X))$ is a sectionwise weak equivalence.

For brevity we denote by $\mathbf G$ the sheafification of $\bb G$.
It equals the simplicial sheaf $(\bb G_m^{\wedge 1})_+\in
sShv_\bullet(Sm/k)$ (see Section~\ref{furtherappl}). Given a
$(S^1,\mathbf G)$-bispectrum $E$ of simplicial Nisnevich sheaves,
replace it by a stably cofibrant bispectrum $E^c$ in the stable
projective motivic model structure of bispectra associated with the
projective motivic structure on $sShv_\bullet(Sm/k)$ in the sense
of~\cite{Bl} (we shall with this model structure throughout the
section). Then each $(i,j)$-entry $E^c_{i,j}$ of $E^c$ belongs to
$\bl{\longrightarrow}{\Delta^{\op}Fr_0(k)}$. We set
   $$M_{fr}^{\bb G}(E)_{i,j}:=Ex^\infty(C_*Fr(E^c_{i,j})),\quad i,j\geq 0. $$
The structure maps in the $S^1$- and $\mathbf G$-direction
   $$Ex^\infty(C_*Fr(E^c_{i,j}))\to\underline{\Hom}(S^1,Ex^\infty(C_*Fr(E^c_{i+1,j}))),\
     Ex^\infty(C_*Fr(E^c_{i,j}))\to\underline{\Hom}(\mathbf G,Ex^\infty(C_*Fr(E^c_{i,j+1})))$$
are obviously induced by the structure maps $u_v,u_h$ of $E^c$
(see~\cite[p.~488]{Jar} for the relevant definitions on bispectra).
Precisely, they are compositions
   $$Ex^\infty(C_*Fr(E^c_{i,j}))\to\underline{\Hom}(S^1,Ex^\infty(C_*Fr(E^c_{i,j}\otimes S^1)))
     \xrightarrow{u_v}\underline{\Hom}(S^1,Ex^\infty(C_*Fr(E^c_{i+1,j})))$$
and
   $$Ex^\infty(C_*Fr(E^c_{i,j}))\to\underline{\Hom}(\mathbf G,Ex^\infty(C_*Fr(E^c_{i,j}\otimes\bb G_m^{\wedge 1})))
     \xrightarrow{u_h}\underline{\Hom}(\mathbf G,Ex^\infty(C_*Fr(E^c_{i,j+1})))$$
respectively.

For brevity we drop $Ex^\infty$ from notation and tacitly assume
below that all spaces like $C_*Fr(E^c_{i,j})$ are sectionwise
fibrant with framed correspondences. We then have a canonical
morphism of bispectra
   $$\zeta:E^c\to M_{fr}^{\bb G}(E).$$
Clearly, $\zeta$ is functorial in $E$.

Denote by $SH^{fr}(k)$ the full subcategory of $SH(k)$ consisting of
framed bispectra, i.e. those bispectra $\cc E$ such that each space
$\cc E_{i,j}$ is a space with framed correspondences and the
structure maps $\cc E_{i,j}\to\underline{\Hom}(S^1,\cc E_{i+1,j})$,
$\cc E_{i,j}\to\underline{\Hom}(\mathbf G,\cc E_{i,j+1})$ preserve
framed correspondences.

\begin{thm}\label{zeta}
For every $(S^1,\mathbf G)$-bispectrum of simplicial Nisnevich
sheaves $E$ the morphism of bispectra $\zeta:E^c\to M_{fr}^{\bb
G}(E)$ is a stable motivic equivalence. In particular, $E$ is
isomorphic to $M_{fr}^{\bb G}(E)$ in $SH(k)$ by the zigzag of
equivalences $E\leftarrow E^c\to M_{fr}^{\bb G}(E)$ and $M_{fr}^{\bb
G}$ induces an equivalence of categories $M_{fr}^{\bb
G}:SH(k)\xrightarrow{\simeq}SH^{fr}(k)$.
\end{thm}

\begin{proof}
Let $diag(E^c),diag(M_{fr}^{\bb G}(E))$ be the diagonal
$S^1\wedge\mathbf G$-spectra. They consist of motivic spaces
$(E^c_{0,0},E^c_{1,1},\ldots)$ and
$(C_*Fr(E^c_{0,0}),C_*Fr(E^c_{1,1}),\ldots)$ respectively. It
suffices to show that $diag(\zeta):diag(E^c)\to diag(M_{fr}^{\bb
G}(E))$ is a stable motivic equivalence of $S^1\wedge\mathbf
G$-spectra.

By~\cite[p.~496]{Jar} $diag(E^c)$ has a natural filtration
   $$diag(E^c)=\colim_n L_n (diag(E^c)),$$
where $L_n (diag(E^c))$ is the spectrum
   $$E_{0,0}^c,E_{1,1}^c,\ldots,E_{n,n}^c,E_{n,n}^c\wedge(S^1\wedge\mathbf G),
     E_{n,n}^c\wedge(S^1\wedge\mathbf G)^2,\ldots$$
In turn, $diag(M_{fr}^{\bb G}(E))$ has a natural filtration
   $$diag(M_{fr}^{\bb G}(E))=\colim_n M_{fr}^{S^1\wedge\bb G}(L_n (diag(E^c))),$$
where $M_{fr}^{S^1\wedge\bb G}(L_n (diag(E^c)))$ is the spectrum
   $$C_*Fr(E_{0,0}^c),C_*Fr(E_{1,1}^c),\ldots,C_*Fr(E_{n,n}^c),C_*Fr(E_{n,n}^c\otimes(S^1\otimes\bb G_m^{\wedge 1})),
     C_*Fr(E_{n,n}^c\otimes(S^1\otimes\bb G_m^{\wedge 1})^2),\ldots$$
It follows from Theorem~\ref{Motivic_Segal_Thm_SimplBi} that each
morphism $L_n (diag(E^c))\to M_{fr}^{S^1\wedge\bb G}(L_n
(diag(E^c)))$ is a stable motivic equivalence, and hence so is
$diag(\zeta)$.
\end{proof}

Observe that the composite functor
   $$H_{\bb A^1}(k)\xrightarrow{\Sigma^\infty_{\bb G}\Sigma^\infty_{S^1}} SH(k)
     \xrightarrow{M_{fr}^{\bb G}}SH(k)$$
is isomorphic to the functor $M_{fr}^{\bb G}$ of
Theorem~\ref{infloopspectra}(3). Thus the functor of
Theorem~\ref{zeta} extends that of Theorem~\ref{infloopspectra}(3).

Next, denote by $\cc M_{fr}^b$ the functor taking a bispectrum $E$
to $(\Theta^\infty_{\bb G}\circ\Theta^\infty_{S^1}\circ M_{fr}^{\bb
G})(E)$ and refer to it as the {\it big framed motive functor}.
Here $\Theta^\infty_{S^1}$ applies to each $S^1$-spectrum
of the bispectrum $M_{fr}^{\bb G}(E)$ similar to the
formula~\eqref{theta} followed by $\Theta^\infty_{\bb G}$ which
applies to the bispectrum $(\Theta^\infty_{S^1}\circ M_{fr}^{\bb
G})(E)$ in the $\mathbf G$-direction similar to~\eqref{theta}.

\begin{thm}\label{a1localspace}
The following statements are true for every bispectrum $E$:

\begin{enumerate}
\item the natural map $\mu:M_{fr}^{\bb G}(E)\to\cc M_{fr}^b(E)$ is a
stable motivic equivalence of bispectra;

\item for any $i,j\geq 0$ the space with framed correspondences $\cc M_{fr}^b(E)_{i,j}$
is $\bb A^1$-local as an ordinary motivic space;

\item the bispectrum $\cc M_{fr}^b(E)^f$, obtained from $\cc M_{fr}^b(E)$ by taking Nisnevich
local replacements $\cc M_{fr}^b(E)_{i,j}^f$ in all entries, is motivically fibrant.
\end{enumerate}
\end{thm}

\begin{proof}
(1). Since the projective motivic model structure on
$sShv_\bullet(Sm/k)$ is weakly finitely generated in the sense
of~\cite{DRO}, our assertion is proved similar to that
of~Lemma~\ref{spektrell}.

(2). Write $E^c=(E_{*,0}^c,E_{*,1}^c,\ldots)$ as a collection of
$S^1$-spectra. Then $M_{fr}^{\bb G}(E)$ is a collection of
$S^1$-spectra $(C_*Fr(E_{*,0}^c),C_*Fr(E_{*,1}^c),\ldots)$ and
   $$\Theta^\infty_{S^1}M_{fr}^{\bb G}(E)=(\Theta^\infty_{S^1}(C_*Fr(E_{*,0}^c)),\Theta^\infty_{S^1}(C_*Fr(E_{*,1}^c)),\ldots).$$
By construction, the $(i,j)$-entry equals
   $$\Theta^\infty_{S^1}M_{fr}^{\bb G}(E)_{i,j}:=\colim(C_*Fr(E_{i,j}^c)\to\underline{\Hom}(S^1,C_*Fr(E_{i+1,j}^c))
        \to\underline{\Hom}(S^2,C_*Fr(E_{i+2,j}^c))\to\cdots).$$
In each weight $j$, the $S^1$-spectrum $E_{*,j}^c$ has a natural
filtration $E^c_{*,j}=\colim_n L_n(E^c_{*,j})$, where
$L_n(E^c_{*,j})$ is the spectrum
   $$E_{0,j}^c,E_{1,j}^c,\ldots,E_{n,j}^c,E_{n,j}^c\wedge S^1,E_{n,j}^c\wedge S^2,\ldots$$
Then $C_*Fr(E^c_{*,j})=C_*Fr(\colim_n L_n(E^c_{*,j}))=\colim_nC_*Fr(L_n(E^c_{*,j}))$, where
$C_*Fr(L_n(E^c_{*,j})$ is the spectrum
   $$C_*Fr(E_{0,j}^c),C_*Fr(E_{1,j}^c),\ldots,C_*Fr(E_{n,j}^c),C_*Fr(E_{n,j}^c\otimes S^1),C_*Fr(E_{n,j}^c\otimes S^2),\ldots$$
Since each space $C_*Fr(E_{n,j}^c\otimes S^\ell)$, $\ell>0$, is $\bb
A^1$-local, then so is the space
$\underline{\Hom}(S^m,C_*Fr(E_{n,j}^c\otimes S^\ell))$, where $m\geq
0$. Therefore each space of
$\Theta^\infty_{S^1}(C_*Fr(E^c_{*,j}))=\Theta^\infty_{S^1}(\colim_nC_*Fr(L_n(E^c_{*,j})))
=\colim_n\Theta^\infty_{S^1}(C_*Fr(L_n(E^c_{*,j})))$ is $\bb
A^1$-local, because so is each
$\Theta^\infty_{S^1}(C_*Fr(L_n(E^c_{*,j})))$. We use here the fact
that a directed colimit of Nisnevich excisive spaces is Nisnevich
excisive to conclude that a directed colimit of $\bb A^1$-local
spaces is $\bb A^1$-local. We see that each
$\Theta^\infty_{S^1}M_{fr}^{\bb G}(E)_{i,j}$ is $\bb A^1$-local. If
we take a Nisnevich local fibrant resolution
$\Theta^\infty_{S^1}M_{fr}^{\bb G}(E)_{i,j}^f$ in each
$(i,j)$-entry, we obtain a bispectrum
$\Theta^\infty_{S^1}M_{fr}^{\bb G}(E)^f$. Then each $S^1$-spectrum
$\Theta^\infty_{S^1}M_{fr}^{\bb G}(E)_{*,j}^f$ of the bispectrum
$\Theta^\infty_{S^1}M_{fr}^{\bb G}(E)^f$ is motivically fibrant in
the local stable model structure of $S^1$-spectra. Indeed, this
follows from the fact that the structure maps of the $S^1$-spectrum
$\Theta^\infty_{S^1}(C_*Fr(E^c_{*,j}))$ are isomorphisms and that
the functor $\underline{\Hom}(S^1,-)$ preserves local equivalences.

Next, the $S^1$-spectrum in each weight $j$ of $\cc M_{fr}^b(E)$
equals by definition
   $$\colim(\Theta^\infty_{S^1}(C_*Fr(E^c_{*,j}))
     \to\underline{\Hom}(\mathbf G,\Theta^\infty_{S^1}(C_*Fr(E^c_{*,j+1})))
     \to\underline{\Hom}(\mathbf G^{\wedge 2},\Theta^\infty_{S^1}(C_*Fr(E^c_{*,j+2})))\to\cdots).$$
We have shown above that $(\Theta^\infty_{S^1}(C_*Fr(E^c_{*,j})))_f$
is a motivically fibrant $S^1$-spectrum. We claim that
   $$\underline{\Hom}(\mathbf G,\Theta^\infty_{S^1}(C_*Fr(E^c_{*,j+1})))\to
     \underline{\Hom}(\mathbf G,(\Theta^\infty_{S^1}(C_*Fr(E^c_{*,j+1})))_f)$$
is a levelwise local equivalence. In this case it will follow that
each space $\cc M_{fr}^b(E)_{i,j}$ is $\bb A^1$-local.

Since both spectra are sectionwise $\Omega$-spectra, it suffices to
prove that this arrow is a stable local equivalence. The presheaves
of stable homotopy groups of the left spectrum are $\bb
A^1$-invariant quasi-stable radditive with framed correspondences
(see~\cite[Introduction]{GP2}. Therefore our claim follows from the
following

\begin{sublem}
Let $\cc X$ be an $\bb A^1$-local motivic $S^1$-spectrum whose
presheaves of stable homotopy groups are homotopy invariant
quasi-stable radditive presheaves with framed correspondences
(see~\cite{GP4} for the definition of such presheaves). Suppose $\cc
X^f$ is a local stable fibrant replacement of $\cc X$. Then the map
of spectra $\underline{\Hom}(\mathbf G,\cc
X)\to\underline{\Hom}(\mathbf G,\cc X^f)$ is a local stable
equivalence.
\end{sublem}

\begin{proof}
First let us compute $\pi_*^{\nis}(\underline{\Hom}((\mathbb
G_m)_+,\cc X^f))$. We have $\cc X^f=\hocolim_{n\to-\infty}\cc
X^f_{\geq n}$, where $\cc X_{\geq n}$ is the naive $n$th truncation
of $\cc X$ in $Sp_{S^1}(sShv_\bullet(Sm/k))$. $\cc X$ has homotopy
invariant, quasi-stable radditive presheaves with framed
correspondences of stable homotopy groups $\pi_*(\cc X)$.
By~\cite[1.1]{GP4} the Nisnevich sheaves $\pi_*^{\nis}(\cc X_{\geq
n})$ are strictly homotopy invariant. If $\cc X^f_{\geq n}$ is a
stable local replacement of $\cc X_{\geq n}$, it follows from
Proposition~\ref{prop:A1_fibrant} that $\cc X^f_{\geq n}$ is
motivically fibrant, hence $\cc X_{\geq n}$ is $\bb A^1$-local in
$SH_{S^1}(k)$. Since
   $$\underline{\Hom}((\mathbb G_m)_+,\cc X^f)=
     \hocolim_{n\to-\infty}\underline{\Hom}((\mathbb G_m)_+,\cc X^f_{\geq n})$$
$\pi_*^{\nis}(\underline{\Hom}((\mathbb G_m)_+,\cc
X^f))=\colim_{n\to-\infty}\pi_*^{\nis}(\underline{\Hom}((\mathbb
G_m)_+,\cc X^f_{\geq n}))$.

Consider the Brown--Gersten convergent spectral sequence
   $$H^p_{\nis}(U\times\bb G_m,\pi_q^{\nis}(\cc X_{\geq n}))\Longrightarrow
     \pi_{q-p}(\cc X^f_{\geq n})(U\times\bb G_m),\quad U\in Sm/k.$$
It follows from~\cite[16.10-11]{GP4} that each presheaf $U\mapsto
H^p_{\nis}(U\times\bb G_m,\pi_q^{\nis}(\cc X_{\geq n}))$ is $\bb
A^1$-invariant quasi-stable radditive with framed correspondences.
If $U$ is a smooth local Henzelian scheme then
by~\cite[2.15(3')]{GP4} there is an embedding
   $$H^p_{\nis}(U\times\bb G_m,\pi_q^{\nis}(\cc X_{\geq n}))
     \hookrightarrow H^p_{\nis}(\bb G_{m,k(U)},\pi_q^{\nis}(\cc X_{\geq n})),$$
where $k(U)$ is the function field on $U$. By the Sublemma
in~\cite[Appendix~A]{GP2} we have that $H^p_{\nis}(\bb
G_{m,k(U)},\pi_q^{\nis}(\cc X_{\geq n}))=0$ for $p>0$, and hence
$H^p_{\nis}(U\times\bb G_{m},\pi_q^{\nis}(\cc X^f_{\geq n}))=0$ for
$p>0$. We can conclude that
   $$\pi_n^{\nis}(\underline{\Hom}((\mathbb G_m)_+,\cc X^f))
     =\pi_n^{\nis}(\cc X^f)(\mathbb G_m\times-).$$
It also follows that
   $$\pi_n^{\nis}(\underline{\Hom}(\mathbf G,\cc X^f))
     =(\pi_n^{\nis}(\cc X^f))_{-1}=(\pi_n^{\nis}(\cc X^f))_{-1}.$$

It remains to show that the morphism of $\bb A^1$-invariant
radditive quasi-stable framed sheaves
   $$(\pi_n(\underline{\Hom}(\mathbf G,\cc X)))^{\nis}=(\pi_n(\cc X))_{-1}^{\nis}\to(\pi_n^{\nis}(\cc X))_{-1}$$
is an isomorphism. Using~\cite[2.15(3')]{GP4} it suffices to check
that it is an isomorphism for every field extension $K/k$. The
homomorphism of Abelian groups
   $$(\pi_n(\cc X))_{-1}^{\nis}(K)=(\pi_n(\cc X))_{-1}(K)\to(\pi_n^{\nis}(\cc X))_{-1}(K)$$
is an isomorphism, because for any $\bb A^1$-invariant radditive
quasi-stable framed presheaf of Abe\-li\-an groups $\cc F$ and every
open $V\subset\bb A^1_K$, one has $\cc F(V)=\cc F^{\nis}(V)$ (see
the proof of~\cite[2.1]{GP4}).
\end{proof}

(3). By the previous assertion each space $\cc M_{fr}^b(E)_{i,j}$ is
$\bb A^1$-local, and hence $\cc M_{fr}^b(E)_{i,j}^f$ is a
motivically fibrant space. The proof of the assertion shows that
$\cc M_{fr}^b(E)_{i,j}^f$ can be computed as the $i$th space of the
the motivically fibrant $S^1$-spectrum
   $$\colim(\Theta^\infty_{S^1}(C_*Fr(E^c_{*,j}))^f
     \to\Omega_\mathbf G(\Theta^\infty_{S^1}(C_*Fr(E^c_{*,j+1}))^f)
     \to\Omega_{\mathbf G^{\wedge 2}}(\Theta^\infty_{S^1}(C_*Fr(E^c_{*,j+2}))^f)\to\cdots).$$
Moreover, this colimit is the $S^1$-spectrum in weight $j$ of the
bispectrum $\cc M_{fr}^b(E)^f$. By~\cite[4.12]{H} such a bispectrum
must be motivically fibrant. This completes the proof.
\end{proof}

\begin{defs}{\rm
(1) Given a $(S^1,\mathbf G)$-bispectrum $E$ and $i,j\geq 0$, denote
by $\underline{\pi}^{fr}_{i,j}(E)$ the graded Nisnevich sheaf
$\bigoplus_{n\geq 0}\pi_{n}^{\nis}(\cc M^b_{fr}(E)_{i,j})$ and refer
to such sheaves as {\it framed sheaves of $E$}.

(2) Denote by $SH^{fr}_{\nis}(k)$ the full subcategory of
$SH^{fr}(k)$ consisting of framed $(S^1,\mathbf G)$-bispectra whose
motivic spaces are $\bb A^1$-local as ordinary motivic spaces.

}\end{defs}

Theorem~\ref{a1localspace} implies the following

\begin{thm}\label{a1localspace1}
The following statements are true:

\begin{enumerate}
\item Framed sheaves detect stable weak equivalences of bispectra.
Namely, a map of bispectra $f:E\to E'$ is a stable motivic
equivalence if and only if
$\underline{\pi}^{fr}_{i,j}(E)\lra{f_*}\underline{\pi}^{fr}_{i,j}(E)$
is an isomorphism of sheaves for all $i,j\geq 0$.

\item For every bispectrum $E$ the bispectrum $\cc M^b_{fr}(E)$
belongs to $SH_{\nis}^{fr}(k)$. Moreover, the functor $\cc
M^b_{fr}:SH(k)\to SH_{\nis}^{fr}(k)$ is an equivalence of categories
and there is a natural isomorphism of endofunctors on $SH(k)$:
   $$\id\to\iota\circ\cc M^b_{fr},$$
where $\iota:SH_{\nis}^{fr}(k)\to SH(k)$ is the natural inclusion.
\end{enumerate}
\end{thm}

In other words, the preceding theorem says that the functor $\cc
M^b_{fr}$ converts classical Morel--Voevodsky's stable motivic
homotopy theory $SH(k)$ into an equivalent local homotopy theory of
$\bb A^1$-local framed bispectra from $SH_{\nis}^{fr}(k)$. The main
ingredient of this equivalent local homotopy theory is framed
motivic spaces of the form $C_*Fr(-,Y)$ with
$Y\in\Delta^{\op}Fr_0(k)$ a simplicial scheme as well as their
framed motives $M_{fr}(Y)$.

We document this as follows.

\begin{thm}\label{a1localspace2}
The Morel--Voevodsky stable motivic homotopy category $SH(k)$
can be defined as follows. Its objects are $(S^1,\mathbf G)$-bispectra and morphisms
between two bispectra $E,E'$ are given by the set $\pi_0(E^c,\cc M_{fr}^b(E')^f)$
of ordinary morphisms between bispectra $E^c$ and $\cc M_{fr}^b(E')^f$ modulo the naive homotopy.
In particular, $SH(k)(\Sigma^\infty_{S^1}\Sigma^\infty_{\bb G}X_+,E')=\pi_0(\cc M_{fr}^b(E')^f_{0,0}(X))$
for any $X\in Sm/k$.
\end{thm}

To conclude the section, the interested reader will easily construct
the ``framed functor with 1/2-coefficients" $\cc M_{fr}^b\{1/2\}$
and prove the same theorems from this section for $SH(k)[1/2]$ in
any characteristic of the base field $k$.

\section{Framed $\bb P^1$-spectra}\label{flamedspectra}

In Theorem~\ref{infloopspaces} we have shown that the
suspension spectrum $\Sigma^\infty_{\bb P^1}\cc X$ of a motivic
space $\cc X\in sShv_\bullet(Sm/k)$ is naturally equivalent to the spectrum
   $$M_{\bb P^{\wedge 1}}(\cc X^c)=(C_*\cc Fr(-,\cc X^c),C_*\cc Fr(-,\cc X^c\wedge T),
     C_*\cc Fr(-,\cc X^c\wedge T^2),\ldots).$$
It induces a functor of homotopy categories
   \begin{equation*}\label{mp1ha1}
    M_{\bb P^{\wedge 1}}:H_{\bb A^1}(k)\to SH(k),
   \end{equation*}
which is equivalent to $\Sigma^\infty_{\bb P^1}$ (see
Theorem~\ref{infloopspaces}). Observe that $M_{\bb P^{\wedge
1}}$ lands in the full subcategory of $\bb P^1$-spectra, denote it
by $SH^{fr}(k)$, whose motivic spaces are $\bb A^1$-invariant with
framed correspondences. This section will show the reader how to get
naturally framed $\bb P^1$-spectra. Another approach for bispectra
was illustrated in the previous section.

The purpose of this final section is to construct an equivalence of categories
   $$SH(k)\lra{\cong} SH^{fr}(k)$$
(see Theorem~\ref{shfrsp}). Moreover, we shall prove that every spectrum is isomorphic in
$SH(k)$ to a framed $\Omega$-spectrum from $SH^{fr}(k)$.

{\it Throughout the section $k$ is any field and by $\Spt^{\bb
P^1}(Sm/k)$ (respectively $\Spt^{T}(Sm/k)$) we mean the category of
$\bb P^1$-spectra (respectively $T$-spectra) associated with
simplicial Nisnevich sheaves}. We shall consider the level and
stable flasque model structures on $\Spt^{\bb P^1}(Sm/k)$ or
$\Spt^{T}(Sm/k)$ in the sense of~\cite{Is}. An advantage of the
model structures is that a filtered colimit of fibrant objects is
again fibrant~\cite[5.3, 6.7]{Is} (in fact, both model structures
are weakly finitely generated in the sense of~\cite{DRO}). A fibrant
$\bb P^1$-spectrum with respect to the stable motivic model
structure will also be referred to as an $\Omega$-spectrum.

The motivic equivalence $\sigma:\bb P^{\wedge 1}\to T$ induces an adjoint pair
   $$f:\Spt^{\bb P^1}(Sm/k)\rightleftarrows\Spt^{T}(Sm/k):g,$$
where $g$ is the forgetful functor. When proving
Theorem~\ref{Segal_Thm_II}(1) the reader may have observed that we
first replaced the suspension spectrum $\Sigma^\infty_{\bb P^1}\cc
X$ by the suspension $T$-spectrum $\Sigma^\infty_{T}\cc
X=f(\Sigma^\infty_{\bb P^1}\cc X)$ and then applied $\Theta^\infty$
to the $\bb P^1$-spectrum $\Sigma^\infty_{\bb P^1,T}\cc X:=
gf(\Sigma^\infty_{\bb P^1}\cc X)=g(\Sigma^\infty_{T}\cc X)$ in order
to get framed correspondences. We see that framed correspondences
are obtained from the $T$-spectrum $\Sigma^\infty_{T}\cc X$.

We want to extend this construction to spectra. Suppose $\cc E\in \Spt^{T}(Sm/k)$.
By~\cite[p.~496]{Jar} $\cc E=(\cc E_0,\cc E_1,\ldots)$ has a natural filtration
   $$\cc E=\colim_n L_n\cc E,$$
where $L_n\cc E$ is the spectrum
   $$\cc E_0,\cc E_1,\ldots,\cc E_n,\cc E_n\wedge T,\cc E_n\wedge T^2,\ldots$$
Denote by $L_n^{\bb P^1}\cc E$ the spectrum $g(L_n\cc E)$. By Lemma~\ref{spektrell} the natural map of spectra
   $$\eta_n:L_n^{\bb P^1}\cc E\to\Theta^\infty L_n^{\bb P^1}\cc E$$
is a stable equivalence. There is an isomorphism of spectra\footnotesize
   $$\iota_n:\Theta^\infty L_n^{\bb P^1}\cc E\lra{\cong}\Theta^\infty L_{n,fr}^{\bb P^1}\cc E:=(\underline{\Hom}(\bb P^{\wedge n},\cc Fr(\cc E_n)),\ldots,
       \underline{\Hom}(\bb P^{\wedge 1},\cc Fr(\cc E_n)),\cc Fr(\cc E_n),\cc Fr(\cc E_n\wedge T),\ldots).$$
\normalsize If we apply the Suslin complex functor $C_*$ levelwise, we get a spectrum\footnotesize
   $$C_*\Theta^\infty L_{n,fr}^{\bb P^1}\cc E:=(\underline{\Hom}(\bb P^{\wedge n},C_*\cc Fr(\cc E_n)),\ldots,
       \underline{\Hom}(\bb P^{\wedge 1},C_*\cc Fr(\cc E_n)),C_*\cc Fr(\cc E_n),C_*\cc Fr(\cc E_n\wedge T),\ldots).$$
\normalsize By~\cite[2.3.8]{MV} the natural map of spectra
   $$\delta_n:\Theta^\infty L_{n,fr}^{\bb P^1}\cc E\to C_*\Theta^\infty L_{n,fr}^{\bb P^1}\cc E$$
is a level motivic weak equivalence.

Passing to the colimit over $n$ we get that the composite map of
spectra
   $$\colim_n(\delta_n\iota_n\eta_n):g(\cc E)=\colim_n L_n^{\bb P^1}\cc E\to\colim_n C_*\Theta^\infty L_{n,fr}^{\bb P^1}\cc E$$
is a stable motivic weak equivalence. We use here the fact that a
sequential colimit of stable motivic weak equivalences is a stable
motivic weak equivalence by~\cite[3.12]{Jar}.

Denote by $\Theta^\infty C_*\cc Fr(\cc E)$ the spectrum\footnotesize
   $$\colim_n\underline{\Hom}(\bb P^{\wedge n},C_*\cc Fr(\cc E_n)),\colim_n\underline{\Hom}(\bb P^{\wedge n},C_*\cc Fr(\cc E_{1+n})),
        \colim_n\underline{\Hom}(\bb P^{\wedge n},C_*\cc Fr(\cc E_{2+n})),\ldots$$
\normalsize Also, denote by $C_*\cc Fr(\cc E)$ the spectrum $(C_*\cc
Fr(\cc E_0),C_*\cc Fr(\cc E_1),C_*\cc Fr(\cc E_2),\ldots)$. Each
structure map $C_*\cc Fr(\cc E_k)\to\underline{\Hom}(\bb P^{\wedge
1},C_*\cc Fr(\cc E_{k+1}))$, $k\geq 0$, is given by the natural
composition\footnotesize
   $$C_*\cc Fr(\cc E_k)\to\underline{\Hom}(\bb P^{\wedge 1},C_*\cc Fr(\cc E_{k}\wedge\bb P^{\wedge 1}))
      \xrightarrow{\sigma_*}\underline{\Hom}(\bb P^{\wedge 1},C_*\cc Fr(\cc E_{k}\wedge T))\xrightarrow{(\epsilon_k)_*}
       \underline{\Hom}(\bb P^{\wedge 1},C_*\cc Fr(\cc E_{k+1})),$$
\normalsize where $\epsilon_k:\cc E_{k}\wedge T\to\cc E_{k+1}$ is the structure map of $\cc E$.
Observe that for any $\cc X\in\Delta^{\op}\bl{\to}{Fr_0}(k)$ the spectrum
$M_{\bb P^{\wedge 1}}(\cc X)$ is isomorphic to $C_*\cc Fr(\Sigma^\infty_{\bb P^1,T}(\cc X))$ in $SH(k)$.

The spectrum $\colim_n C_*\Theta^\infty L_{n,fr}^{\bb P^1}\cc E$ is
naturally isomorphic to the spectrum $\Theta^\infty C_*\cc Fr(\cc
E)$ and the stable motivic equivalence
   $$\alpha:g(\cc E)\to\Theta^\infty C_*\cc Fr(\cc E)$$
factors as $g(\cc E)\lra{\beta}C_*\cc Fr(\cc
E)\lra{\gamma}\Theta^\infty C_*\cc Fr(\cc E)$. Here $\gamma$ equals
the stable motivic equivalence of Lemma~\ref{spektrell} and each
$\beta_n:\cc E_n\to C_*\cc Fr(\cc E_n)$ is the obvious map of
motivic spaces. The two-out-of-three property implies $\beta$ is a
stable motivic equivalence. It is plainly functorial in $\cc E\in
\Spt^{T}(Sm/k)$.

\begin{thm}\label{cfre}
For every $T$-spectrum $\cc E$ there is a natural stable motivic
equivalence of spectra $\beta:g(\cc E)\to C_*\cc Fr(\cc E)$,
functorial in $\cc E$. Moreover, a morphism of spectra $u:\cc
E\to\cc E'$ is a stable motivic equivalence if and only if so is
$C_*\cc Fr(u):C_*\cc Fr(\cc E)\to C_*\cc Fr(\cc E')$. In particular,
we have a functor
   $$C_*\cc Fr:\Ho(\Spt^{T}(Sm/k))\to\Ho(\Spt^{\bb P^1}(Sm/k)),$$
which is an equivalence of categories.
\end{thm}

\begin{proof}
The first statement has already been verified above. The second
statement follows from the first statement and the fact that $u:\cc
E\to\cc E'$ is a stable motivic equivalence if and only if so is
$g(u):g(\cc E)\to g(\cc E')$ (see~\cite[p.~477]{Jar}). The functor
   $$C_*\cc Fr:\Ho(\Spt^{T}(Sm/k))\to\Ho(\Spt^{\bb P^1}(Sm/k)),$$
is an equivalence of categories, because so is $g$ and $\beta:g(\cc E)\to C_*\cc Fr(\cc E)$
is a stable motivic equivalence, functorial in $\cc E$.
\end{proof}

\begin{lem}\label{cfretheta}
For every $T$-spectrum $\cc E$, each space of the $\bb P^1$-spectrum
$\Theta^\infty\cc Fr(\cc E)$ (respectively $\Theta^\infty C_*\cc
Fr(\cc E)$) is a motivic space with framed correspondences.
\end{lem}

\begin{proof}
$\Theta^\infty \cc Fr(\cc E)$ is the spectrum
   $$\colim_n\underline{\Hom}(\bb P^{\wedge n},\cc Fr(\cc E_n)),\colim_n\underline{\Hom}(\bb P^{\wedge n},\cc Fr(\cc E_{1+n})),
        \colim_n\underline{\Hom}(\bb P^{\wedge n},\cc Fr(\cc E_{2+n})),\ldots$$
Given a sheaf $F$ and $s>0$, we claim that $\underline{\Hom}(\bb
P^{\wedge s},\cc Fr(F))$ is a sheaf with framed correspondences (the
internal Hom is taken in the category of pointed Nisnevich sheaves).
To see this, define for any $U,X\in Sm/k$ and any $m,n$ a map of
pointed sets
$$\text{Hom}(U_+\wedge \bb P^{\wedge m},X_+\wedge T^m) \wedge\text{Hom}(X_+\wedge \bb P^{\wedge s+n},F\wedge T^n) \to
\text{Hom}(U_+\wedge \bb P^{\wedge (s+m+n)},F\wedge T^{m+n}).$$
If $\alpha: U_+\wedge \bb P^{\wedge m} \to X_+\wedge T^m$ and
$v: X_+\wedge \bb P^{\wedge s+n}\to F\wedge T^n$
are morphisms of pointed Nisnevich sheaves, then we define $\alpha^*(v)$ as the composite morphism
\begin{gather*}
U_+\wedge\bb P^{\wedge s}\wedge\bb P^{\wedge m}\wedge \bb P^{\wedge n}\cong
U_+\wedge\bb P^{\wedge m}\wedge \bb P^{\wedge s+n} \xrightarrow{\alpha\wedge\id} X_+\wedge T^m\wedge \bb P^{\wedge s+n}\cong\\
\cong T^m\wedge X_+\wedge \bb P^{\wedge s+n} \xrightarrow{\id\wedge v} T^m\wedge F\wedge T^n\cong F\wedge T^m\wedge T^n.
\end{gather*}
Passing to the colimit over $n$, we get that $\underline{\Hom}(\bb
P^{\wedge s},\cc Fr(F))$ is a sheaf with framed correspondences as
claimed.

Next, the composite morphism
   $$\underline{\Hom}(\bb P^{\wedge s},\cc Fr(\cc E_k))\to\underline{\Hom}(\bb P^{\wedge s+ 1},\cc Fr(\cc E_{k}\wedge T)) \xrightarrow{(\epsilon_k)_*}\underline{\Hom}(\bb P^{\wedge s+1},\cc Fr(\cc E_{k+1})),\quad s,k\geq 0,$$
is plainly a morphism of framed Nisnevich sheaves. Recall that the
left arrow is induced by the map taking $(v: X_+\wedge \bb P^{\wedge
s+n} \to\cc E_k\wedge T^n)\in\underline{\Hom}(\bb P^{\wedge s},\cc
Fr_n(\cc E_k))$ to the composition\footnotesize
   $$(X_+\wedge \bb P^{\wedge s+1+n}\cong X_+\wedge \bb P^{\wedge s+n}\wedge \bb P^{\wedge 1}\xrightarrow{v\wedge\sigma}
       \cc E_k\wedge T^n\wedge T\cong\cc E_k\wedge T\wedge T^n)\in
       \underline{\Hom}(\bb P^{\wedge s+1},\cc Fr_n(\cc E_{k}\wedge T)).$$
\normalsize It follows that each motivic space $(\Theta^\infty \cc
Fr(\cc E))_k=\colim_n\underline{\Hom}(\bb P^{\wedge n},\cc Fr(\cc
E_{k+n}))$ of the spectrum $\Theta^\infty \cc Fr(\cc E)$ has framed
correspondences. Obviously, the same is true for the spectrum
$\Theta^\infty C_*\cc Fr(\cc E)$.
\end{proof}

Suppose $\cc E\in \Spt^{T}(Sm/k)$. Then there are isomorphisms of
$\bb P^1$-spectra
   $$\Theta^\infty(g(\cc E))=\Theta^\infty(\colim_n L_n^{\bb P^1}\cc E)\cong\colim_n\Theta^\infty(L_n^{\bb P^1}\cc E))\cong\Theta^\infty\cc Fr(\cc E).$$
If $\cc E$ is levelwise motivically fibrant it follows that
$\Theta^\infty\cc Fr(\cc E)$ is an $\Omega$-spectrum, because so is
$\Theta^\infty(g(\cc E))$ by~\cite[4.6]{H}. Using
Lemma~\ref{cfretheta}, we have shown therefore the following

\begin{lem}\label{omegafr}
For every levelwise fibrant $T$-spectrum $\cc E$, the spectrum
$\Theta^\infty\cc Fr(\cc E)\in \Spt^{\bb P^1}(Sm/k)$ is an
$\Omega$-spectrum with framed correspondences.
\end{lem}

We are now in a position to prove that $SH(k)$ is canonically
equivalent to its full subcategory $SH^{fr}(k)$ of framed spectra,
i.e. those spectra whose motivic spaces are $\bb A^1$-invariant with
framed correspondences.

\begin{thm}\label{shfrsp}
The functor $C_*\cc Fr:\Spt^{T}(Sm/k)\to\Spt^{\bb P^1}(Sm/k)$ takes
a spectrum $\cc E$ to a framed spectrum $C_*\cc Fr(\cc E)$. The
composite functor
   $$SH(k)=\Ho(\Spt^{\bb P^1}(Sm/k))\xrightarrow{f}\Ho(\Spt^{T}(Sm/k))\xrightarrow{C_*\cc Fr}\Ho(\Spt^{\bb P^1}(Sm/k)),$$
induces an equivalence of categories $SH(k)\lra{\sim} SH^{fr}(k)$.
Moreover, every ${\bb P^1}$-spectrum is isomorphic in $SH(k)$ to a framed $\Omega$-spectrum.
\end{thm}

\begin{proof}
By construction, $C_*\cc Fr(\cc E)$ is an $\bb A^1$-invariant with
framed correspondences. The next statement follows from
Theorem~\ref{cfre} and the fact that $f$ is an equivalence of
categories (see~\cite[2.13]{Jar}). To show that every ${\bb
P^1}$-spectrum $E$ is isomorphic in $SH(k)$ to a framed
$\Omega$-spectrum, let $E^c$ be a cofibrant replacement of $E$ and
$f(E^c)_{lf}\in\Spt^T(Sm/k)$ be a level fibrant replacement of the
$T$-spectrum $f(E^c)$. Let $\Theta^\infty_T$ denote the
stabilization functor in the category of $T$-spectra. Since the
flasque motivic model structure on spaces is weakly finitely
generated (this follows from~\cite[3.10]{Is}), the $T$-spectrum
$\Theta^\infty_T(f(E^c)_{lf})$ is fibrant by~\cite[4.6]{H} and the
morphism $f(E^c)_{lf}\to\Theta^\infty_T(f(E^c)_{lf})$ is a stable
equivalence by~\cite[4.11]{H}. The canonical arrow
$g(\Theta^\infty_T(f(E^c)_{lf}))\to\Theta^\infty(gf(E^c)_{lf})$ is a
level weak equivalence by~\cite[p.~477]{Jar}, and hence the
composite morphism
   $$E^c\to gf(E^c)\to g(\Theta^\infty_T(f(E^c)_{lf}))\to\Theta^\infty(gf(E^c)_{lf})$$
is a stable weak equivalence, where the left arrow is the adjunction
unit morphism. The $\bb P^1$-spectrum $\Theta^\infty(gf(E^c)_{lf})$
is a framed $\Omega$-spectrum by Lemma~\ref{omegafr}, so the zigzag
   $$E\leftarrow E^c\to \Theta^\infty(gf(E^c)_{lf})$$
gives an isomorphism in $SH(k)$.
\end{proof}

The preceding theorem says that we can define the stable motivic
homotopy theory as framed $\bb P^1$-spectra or even framed
$\Omega$-spectra. However, such framed $\Omega$-spectra are hardly
amenable for explicit calculations in general, because they require
levelwise motivically fibrant replacements by construction. Instead,
the main point of this paper is to show that whenever the base field
$k$ is infinite perfect with $\charr k\ne2$, one can nevertheless
construct {\it exlicit\/ } fibrant $\Omega$-spectra using framed
correspondences of Voevodsky and the machinery of framed motives
introduced and developed in this paper.

Namely, suppose $\cc E=(\cc E_0,\cc E_1,\ldots)\in\Spt^{T}(Sm/k)$ is such that its framed spectrum $C_*\cc Fr(\cc E)$
satisfies the following conditions:

\begin{enumerate}
\item each space $C_*\cc Fr(\cc E_n)$, $n\geq 0$, is locally connected;
\item each space $C_*\cc Fr(\cc E_n)$, $n\geq 0$, is $\sigma$-invariant (i.e. it
takes the framed correspondence $\sigma_X=(X\times 0,X\times\bb A^1,pr_{\bb A^1},pr_X)$ of level one
to a weak equivalence of simplicial sets for every $X\in Sm/k$);
\item each structure morphism induces a motivic equivalence
   $$C_*\cc Fr(\cc E_n)_f\to\underline{\Hom}(\bb P^{\wedge 1},C_*\cc Fr(\cc E_{n+1})_f),\quad n\geq 0,$$
where the subscript ``$f$" refers to a local fibrant replacement.
\end{enumerate}
Then the spectrum $C_*\cc Fr(\cc E)_f:=(C_*\cc Fr(\cc E_0)_f,C_*\cc Fr(\cc E_1)_f,\ldots)$
is an $\Omega$-spectrum stably equivalent to $\cc E$. In particular, if a motivic space $\cc X$ is such that
its suspension $T$-spectrum satisfies $(1)-(3)$, then $C_*\cc Fr(\cc X)$ is locally
equivalent to the space $\Omega^\infty_{\bb P^1}\Sigma^\infty_{\bb P^1}\cc X$ (see p.~\pageref{ominfty}
for the definition of the latter space).
The hardest condition in practice is condition (3), where the machinery of framed motives
works in its full capacity.

\section{Concluding remarks}


To finish the paper, we should mention that Voevodsky~\cite{Voe2}
also defines another type of framed correspondences
$Fr^{rat}_*(U,X)$, in which regular functions on etale neighborhoods
of supports are replaced by {\it rational\/} functions. Additivity
Theorem~\ref{additivity} is also true for the motivic space
   $$U\in Sm/k\longmapsto C_*Fr^{rat}(U,X)\in\bb S_\bullet\quad X\in Sm/k.$$
It follows from~\cite{Voe2} that $C_*Fr^{rat}(U,X)$ is a group-like
space, and hence the $S^1$-spectrum
   $$M_{fr}^{rat}(X)=(C_*Fr^{rat}(-,X),C_*Fr^{rat}(-,X\otimes S^1),C_*Fr^{rat}(-,X\otimes S^2),\ldots)$$
is an $\Omega$-spectrum by the Segal machine~\cite{S}. We call it
the {\it rational framed motive of $X\in Sm/k$}.

\begin{conj}\label{trudno}
The natural map of $S^1$-spectra $M_{fr}(X)\to M_{fr}^{rat}(X)$ is a
local stable weak equivalence for all $X\in Sm/k$.
\end{conj}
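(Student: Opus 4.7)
The plan is to mimic the strategy of the Resolution Theorem~\ref{resolution}: set up the rational analog of the $BCD$-local framework and reduce the conjecture to a sectionwise comparison at smooth local Henselian schemes. First I would check that the key inputs of the paper admit rational versions. The Additivity Theorem~\ref{additivity} for $C_*Fr^{rat}$ is proved by essentially the same argument, since the homotopies used there are built from framings of level zero and one that are already regular. Theorem~\ref{thm1} is formulated and proved for both framed and $R$-framed presheaves in~\cite{GP4}. Granting these inputs, the argument of Theorem~\ref{resolution} goes through and supplies a motivically fibrant replacement of $M_{fr}^{rat}(X)$ whose sections on a local Henselian $W$ compute $\pi_*(M_{fr}^{rat}(X)(W))$. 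The conjecture then reduces to showing that for every smooth local Henselian $W$, the map of $\Omega$-spectra $M_{fr}(X)(W)\to M_{fr}^{rat}(X)(W)$ is a stable equivalence.

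Second, I would attack this reduction via a direct geometric comparison. The key claim is that for every finite pointed simplicial set $K$, the natural map
$$C_*Fr(W, X\otimes K)\to C_*Fr^{rat}(W, X\otimes K)$$
is a weak equivalence of simplicial sets, and then the stable equivalence follows from the Segal machine as in Definition~\ref{frmotive}. Given a rational framed correspondence $\Phi=(U,(\phi_1,\ldots,\phi_n),g)$ from $W$ to $X$ with support $Z\subset\bb A^n_W$, one exploits that $W$ is Henselian local and $Z$ is finite over $W$, so $Z$ is a disjoint union of Henselian local schemes. I would produce an $\bb A^1$-homotopy $\Xi$ of rational framed correspondences connecting $\Phi$ to a regular framed correspondence $\Phi'$. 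The device I have in mind is a scaling homotopy of the form $s\mapsto \phi_i/(1-s+s\cdot h_i)$ for suitable regular functions $h_i$ chosen so that at $s=1$ each modified function is regular on a neighborhood of $Z$; shrinking $U$ along the way to an etale neighborhood of $Z$ inside the common regular locus of the new $\phi_i$'s would yield a genuine regular framed correspondence, thereby covering surjectivity on $\pi_0$. The same procedure, applied to an $\bb A^1$-homotopy of rational framed correspondences whose endpoints are regular, should yield a chain of elementary regular $\bb A^1$-homotopies, covering injectivity.

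The main obstacle is executing this geometric step coherently across all simplices of $C_*Fr^{rat}(W, X\otimes K)$: one must remove the polar loci of the $\phi_i$'s simultaneously while preserving $Z$ and the etale map to $\bb A^n_W$. The polar locus of a rational function on an etale neighborhood is a divisor that need not be disjoint from $Z$, so a naive shrinking of $U$ is not immediately possible and a preliminary deformation is required. The Henselian hypothesis on $W$ and the finiteness of $Z$ over $W$ are crucial here, since $Z$ is then zero-dimensional with Henselian local rings and should permit the construction of the $h_i$ by a Hensel-type lifting argument that can be made functorial in the simplicial parameter. Once this step is carried out, Additivity and the Segal machinery upgrade the level-zero comparison to the stable weak equivalence of the conjecture; combining with the reduction of the first paragraph then furnishes a Nisnevich local stable equivalence as claimed.
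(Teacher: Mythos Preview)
The statement you are trying to prove is explicitly recorded in the paper as a \emph{conjecture} (Conjecture~\ref{trudno}), not as a theorem; the paper offers no proof and only remarks that it is close in spirit to a question in Voevodsky's notes~\cite{Voe2}. So there is no ``paper's own proof'' to compare against: you are attempting to settle an open problem.

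As for the merits of your proposal, the reduction in your first paragraph is reasonable in outline, but it rests on inputs you have not secured. You assert that Theorem~\ref{thm1} is proved in~\cite{GP4} for $R$-framed presheaves as well; the paper only states and uses it for framed presheaves, and the rational case would require its own argument (the proof sketch in Theorem~\ref{thm1} uses explicit manipulations with framings that would need to be re-examined in the rational setting). Likewise the Resolution Theorem~\ref{resolution} depends on Lemma~\ref{ska} and the rational analog of Corollary~\ref{cechcor}, which you have not addressed.

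The decisive gap, however, is the geometric step you yourself flag. Your scaling device $s\mapsto\phi_i/(1-s+s\cdot h_i)$ does not obviously define a rational framed correspondence from $W\times\bb A^1$ to $X$: you must keep the common vanishing locus finite over $W\times\bb A^1$ throughout the homotopy, and multiplying each $\phi_i$ by a unit near $Z$ does not control what happens away from $Z$. Moreover, producing the $h_i$ so that $\phi_i/h_i$ becomes regular on an \'etale neighborhood of $Z$ amounts to clearing the polar divisor of $\phi_i$, and there is no reason such $h_i$ exist with the required invertibility along the whole homotopy; the polar divisor of $\phi_i$ may meet $Z$, so you cannot simply shrink $U$. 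The Henselian hypothesis on $W$ gives you information about $Z$ (a finite disjoint union of local Henselian schemes), but the framing lives on $U$, not on $Z$, and Hensel-type lifting does not obviously propagate from $Z$ to an \'etale neighborhood in the way you need. Finally, even granting a pointwise regularization, upgrading it to a simplicial map compatible with all face and degeneracy maps of $C_*Fr^{rat}(W,X\otimes K)$ is a separate and serious difficulty that your sketch does not resolve. In short, the heart of the conjecture is precisely the step you label ``the main obstacle,'' and your proposal does not close it.
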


Basing on the technique developed in the previous sections, it is
quite standard to develop homotopy theory of rational framed
motives. If the conjecture is true, one can then construct explicit
motivic fibrant replacements for spectra/bispectra of the form
$\Sigma_{\bb P^{1}}^\infty X_+$ and $\Sigma_{S^1}^\infty\Sigma_{\bb
G}^\infty X_+$ respectively in terms of spaces $C_*Fr^{rat}(-,X)$.

\end{document}